\numberwithin{equation}{section}
\newtheorem{theorem}{\sc Theorem}[section]
\newtheorem{definition}[theorem]{\sc Definition}
\newtheorem{lemma}[theorem]{\sc Lemma}
\theoremstyle{plain}
\newtheorem{prop}[theorem]{\sc Proposition}
\newtheorem{cor}[theorem]{\sc Corollary}
\theoremstyle{remark}
\newtheorem{remark}[theorem]{\sc Remark}
 \newcommand{\ol}[1]{\overline{#1}}
 \newcommand{\ul}[1]{\underline{#1}}
\newcommand{\F}{\mathcal F}
\newcommand{\R}{\mathbb{R}}
\newcommand{\Sol}{\mathcal S}
\newcommand{\Solfrak}{\mathfrak{S}}
\newcommand{\superSol}{\overline{\mathfrak{S}}}
\newcommand{\N}{\mathbb{N}}
\renewcommand{\P}{\mathbb{P}}
\newcommand{\parts}{\mathscr{P}}
\newcommand{\EE}{\mathbb{E}}
\newcommand{\cal}[1]{\mathcal #1}
\newcommand{\HV}{{\mathcal H}}
\newcommand{\HF}{{\mathcal H}}
\newcommand{\eps}{\varepsilon}
\renewcommand{\epsilon}{\varepsilon}
\newcommand{\landa}{\lambda}
\renewcommand{\emptyset}{\varnothing}
\renewcommand{\ge}{\geqslant}
\newcommand{\CC}{\D C}
\newcommand{\ccyl}{[0,+\infty)\times\R}
\newcommand{\Ham}{\mathscr{H}(\alpha_0,\alpha_1,\gamma)}
\newcommand{\Hamall}{\mathscr{H}}
\newcommand{\D}[1]{\mbox{\rm #1}}
\begin{document}
	
\title[Stochastic homogenization of viscous HJ
equations in 1d]{Stochastic homogenization\\ of  nondegenerate viscous HJ equations in 1d}

\author[A.\ Davini]{Andrea Davini}
\address{Andrea Davini\\ Dipartimento di Matematica\\ {Sapienza} Universit\`a di
  Roma\\ P.le Aldo Moro 2, 00185 Roma\\ Italy}
\email{davini@mat.uniroma1.it}

\date{February 26, 2024}

\subjclass[2010]{35B27, 35F21, 60G10.} 
\keywords{Viscous Hamilton-Jacobi equation, stochastic homogenization, stationary ergodic random environment, sublinear corrector}

\begin{abstract}
We prove homogenization for a nondegenerate viscous Hamilton-Jacobi equation in dimension one in stationary ergodic environments
with a superlinear (nonconvex) Hamiltonian of fairly general type.
\end{abstract}

\maketitle

\section{Introduction}\label{sec:intro}

In this paper we are concerned with the asymptotic behavior,  as $\epsilon\to 0^+$, of solutions of a  viscous
Hamilton-Jacobi (HJ) equation of the form
\begin{equation}\label{eq:introHJ}
  \partial_t u^\epsilon=\epsilon a\left({x}/{\epsilon},\omega\right) \partial^2_{xx} u ^\epsilon+H\left({x}/{\eps},\partial_x u^\epsilon,\omega\right) \qquad \hbox{in $(0,+\infty)\times\R$,}
  \tag{EHJ$_\eps$}
\end{equation}
where $H:\R\times\R\times\Omega\to\R$ is a  superlinear (in the momentum) Hamiltonian of fairly general type, and $a:\R\times\Omega\to [0,1]$ is Lipschitz on $\R$ for every fixed $\omega$. The dependence of the equation on the random environment $(\Omega,\F,\P)$ enters through the Hamiltonian $H(x,p,\omega)$ and the diffusion coefficients $a(x,\omega)$, which are assumed to be stationary with respect to shifts in $x$, and bounded and Lipschitz continuous on $\R$, for every fixed $p\in\R$ and $\omega\in\Omega$.  The diffusion coefficient is assumed nondegenerate, i.e.,  $a(\cdot,\omega)>0$ in $\R$ almost surely.  Under these hypotheses, we prove homogenization for equation \eqref{eq:introHJ}, see Theorem \ref{thm:genhom} 
for  the precise statement. The full set of assumptions is presented in Section \ref{sec:result}, here we want to stress that we do not require any convexity condition of any kind on the Hamiltonian in the momentum.

This is the first work where the homogenization of nondegenerate viscous HJ equations in 1d in random media is proved in full generality, at least as far as superlinear Hamiltonians are concerned, thus extending to this setting the results established in the 1d inviscid case in \cite{ATY_nonconvex, Gao16}.   
With respect to previous contributions on the subject, the crucial step forward of our work consists in dealing with Hamiltonians  where the dependence on $x$ and $p$ is not necessarily decoupled and, more important, in getting rid of any additional requirement on the pair $(a,H)$, notably the hill and valley conditions that were assumed in \cite{KYZ20,  DK17, DKY23}. 

We would also  like to point out that our proof is  technical simple. One could compare it with the ones available on the topic of stationary ergodic homogenization of HJ equations in 1d, both in the viscous and in the inviscid case. 
It  relies, among other things, on the use of a novel idea which is presented in Theorem \ref{teo lower bound}.
\footnote{This idea was first introduced by the author in \cite[Theorem 4.5]{D23c}. Unfortunately, the argument to prove the main homogenization result therein contained is also based on \cite[Theorem 4.3]{D23c}, whose proof contains a non-amendable flaw.} This allows us to prove, in the end, that the set of $\theta$ in $\R$ for which the related ``cell'' problem associated with \eqref{eq:introHJ} does not admit correctors is the possibly countable union of pairwise disjoint, bounded open intervals which correspond to flat parts of the effective Hamiltonian. This complements, in our setting, the information about the existence of correctors provided by \cite{CaSo17} for possibly degenerate viscous HJ equations of the form \eqref{vHJ}  in any space dimension. We refer the reader to Section \ref{sec:outline} for a more detailed description of our proof strategy.  

The present work, in pair with \cite{D23b}, implies in particular homogenization of \eqref{eq:introHJ} when the Hamiltonian is additionally assumed quasiconvex in $p$ and the diffusion coefficient is possibly degenerate (but {non null}). Indeed, in \cite{D23b} we have established homogenization for quasiconvex Hamiltonians belonging to the same class herein considered in the degenerate regime, i.e., when the diffusion coefficient satisfies $\min_{\R} a(\cdot,\omega)=0$ almost surely and $a\not\equiv 0$. The effective Hamiltonian is also shown to inherit the quasiconvexity from $H$. This is not to be expected in general in the nondegenerate case, in fact it does not even hold in the classical periodic case, as it was recently pointed out in \cite{KY23}.

\subsection{Literature review.}
Equations of the form \eqref{eq:introHJ} are a subclass of general
viscous stochastic HJ equations
\begin{equation}\label{vHJ}
  \partial_tu^\epsilon= \epsilon\text{tr}\left(A\left(\frac{x}{\epsilon},\omega\right) D^2 u^\epsilon\right)+H\left(D u^\epsilon,\frac{x}{\epsilon},\omega\right)\qquad \hbox{in $(0,+\infty)\times\R^d$,}
\end{equation}
where $A(\cdot,\omega)$ is a bounded, symmetric, and nonnegative
definite $d\times d$ matrix with a Lipschitz square root. The ingredients of the equation are assumed to be stationary with respect to shifts in $x$. 
This setting encompasses the periodic and the quasiperiodic cases, for which homogenization has been proved under fairly general assumptions by showing the existence of (exact or approximate) correctors, i.e., sublinear functions that solve an associated stationary HJ equations \cite{LPV, Evans92, I_almostperiodic, LS_almostperiodic}. 
In the stationary ergodic setting, such solutions do not exist in general, as it was shown in \cite{LS_correctors}, see also \cite{DS09,CaSo17} for inherent discussions and results. This is the main reason why the extension of the homogenization theory to random media is nontrivial 
and required the development of new arguments. 

The first homogenization results in this framework were obtained for convex Hamiltonians in the case of inviscid equations in \cite{Sou99, RT} and then for their viscous counterparts in \cite{LS2005,KRV}. 
By exploiting the metric character of first order HJ equations, homogenization has been extended to the case of quasiconvex Hamiltonians, first in dimension 1 \cite{DS09} and then in any space dimension \cite{AS}. 

The topic of homogenization in stationary ergodic media for HJ equations that are nonconvex in the gradient variable
remained an open problem for about fifteen years. Recently, it has been shown via counterexamples, first in the inviscid case \cite{Zil,FS}, then in the viscous one \cite{FFZ}, that homogenization can fail for Hamiltonians of the form $H(x,p,\omega):=G(p)+V(x,\omega)$ whenever $G$ has a strict saddle point. This has shut the door to the possibility of having a general qualitative homogenization theory  in the stationary ergodic setting in dimension $d\geqslant 2$, at least without imposing further mixing conditions on the stochastic environment. 

On the positive side, a quite general homogenization result, which includes as particular instances both the inviscid and viscous cases, has been established in \cite{AT} for Hamiltonians that are positively homogeneous of degree $\alpha\geqslant 1$ and under a finite range of dependence condition on the probability space $(\Omega,\F, \P)$. 

In the inviscid case, homogenization has been proved in \cite{ATY_1d,Gao16} in dimension $d=1$ for a rather general class of coercive and nonconvex Hamiltonians, and in any space dimension for Hamiltonians of the form $H(x,p,\omega)=\big(|p|^2-1\big)^2+V(x,\omega)$, see   \cite{ATY_nonconvex}.\smallskip 

Even though the addition of a diffusive term is not expected to prevent homogenization, the literature on viscous HJ equations 
in stationary ergodic media has remained rather limited until very recently. 
The viscous case is in fact known to present additional 
challenges which cannot be overcome by mere
modifications of the methods used for $a\equiv 0$.

Apart from already mentioned work \cite{AC18}, 
several progresses concerning the homogenization of the viscous HJ equation \eqref{vHJ} with
nonconvex Hamiltonians have been recently made in 
\cite{DK17, YZ19, KYZ20, Y21b, DKY23, D23a, D23b}. In the joint paper \cite{DK17}, we have
shown homogenization of \eqref{vHJ} with $H(x,p,\omega)$ which
are ``pinned'' at one or several points on the $p$-axis and convex in
each interval in between. For example, for every $\alpha>1$ the
Hamiltonian $H(p,x,\omega)=|p|^\alpha-c(x,\omega)|p|$ 
is pinned at $p=0$ (i.e.\
$H(0,x,\omega)\equiv
\mathrm{const}$) and convex in $p$ on each of the two intervals $(-\infty, 0)$ and
$(0,+\infty)$.
Clearly, adding a non-constant potential $V(x,\omega)$ breaks the pinning
property. In particular, homogenization of equation \eqref{vHJ} for $d=1$, $A\equiv \mathrm{const}>0$ and 
 $H(p,x,\omega):=\frac12\,|p|^2-c(x,\omega)|p|+V(x,\omega)$ with $c(x,\omega)$ bounded and strictly positive 
remained an open problem even when $c(x,\omega)\equiv c>0$.  
Homogenization for this kind of equations with $A\equiv 1/2$ and $H$ as
above with $c(x,\omega)\equiv c>0$ was proved in \cite{KYZ20} 
under a novel hill and valley condition,\footnote{Such a hill and valley condition is fulfilled for a wide class of typical random environments without any restriction on their mixing properties, see  \cite[Example 1.3]{KYZ20} and \cite[Example 1.3]{YZ19}.  It is however not satisfied if the potential is ``rigid'', for example, in the periodic case.}
 that was introduced in \cite{YZ19} 
to study a sort of discrete version of this problem. 
The approach of \cite{KYZ20} relies on the Hopf-Cole
transformation, stochastic control representations of solutions and
the Feynman-Kac formula. It is applicable to \eqref{eq:introHJ}
with $H(x,p,\omega):=G(p)+V(x,\omega)$ and $G(p):=\frac12|p|^2-c|p|=\min\{\frac12p^2-cp,\frac12p^2+cp\}$ only. 
In the joint paper 
\cite{DK22}, we have proposed a different proof solely based on PDE methods. 
This new approach is flexible enough to be applied to the possible degenerate case 
$a(x,\omega)\ge 0$, and to any $G$ which is a minimum of a finite number of convex
superlinear functions $G_i$  having the same
minimum. The original hill and valley condition assumed in \cite{YZ19,KYZ20} is weakened in 
favor of a scaled hill and valley condition.
The arguments crucially rely on this condition and on the fact that all the functions $G_i$ have the same minimum.


The PDE approach introduced in \cite{DK22} was subsequently refined in \cite{Y21b} in order to prove 
homogenization for equation \eqref{eq:introHJ} when the function $G$ is superlinear and quasiconvex,  $a>0$ and the pair 
$(a,V)$ satisfies a scaled hill condition equivalent to the one adopted in \cite{DK22}. 
The core of the proof consists in showing existence and uniqueness of correctors  that possess stationary derivatives satisfying suitable bounds. These kind of results were obtained in \cite{DK22} by proving tailored-made  comparison principles and by exploiting a general result from \cite{CaSo17}  (and this was the only point where the piecewise convexity of $G$ was used). The novelty brought in by \cite{Y21b} 
relies on the nice observation that this can be directly proved via ODE arguments, which are viable since we are in one space dimension and $a>0$.

By reinterpreting this approach in the viscosity sense and by making a more substantial use of viscosity techniques, the author has extended in \cite{D23a} to the possible degenerate case $a\geqslant 0$ the homogenization result established in \cite{Y21b}, providing in particular a unified proof which encompasses both the inviscid and the viscous case.  


A substantial improvement of the results of \cite{DK22}  is provided in our joint work 
 \cite{DKY23}, where the  
main novelty consists in allowing a  general
superlinear $G$ without any restriction on its shape or the
number of its local extrema. Our analysis crucially relies on the assumption that the pair 
$(a,V)$ satisfies the scaled hill and valley condition and, differently from \cite{DK22}, 
that the diffusive coefficient $a$ is nondegenerate, i.e., $a>0$. 
%
The proof consists in showing existence of suitable correctors whose derivatives 
are confined on different branches of $G$, as well as on a strong induction argument 
which uses as  base case the homogenization result established in \cite{Y21b}.

Our recent work \cite{D23b} is the first one where a homogenization result for equation \eqref{eq:introHJ} has been established without imposing any hill or valley-type condition of any sort on the pair $(a, H)$. In \cite{D23b} we prove homogenization of equation  \eqref{eq:introHJ} for a quasiconvex Hamiltonian belonging to the same class considered in the present paper and with the only further assumption that the diffusion coefficient $a$ is not identically zero and vanishes at some points or on some regions of $\R$, almost surely. This assumption on the diffusion coefficient is crucially exploited to obtain suitable existence, uniqueness and regularity results of correctors, as well as a quasi-convexity property of the expected effective Hamiltonian.
This is not to be expected in general in the nondegenerate case. In fact, it does not even hold in the classical periodic case, as it was recently pointed out in \cite{KY23}, where the authors identify a class of quasiconvex functions $G(p)$ and 1-periodic potentials $V(x)$ for which the effective Hamiltonian arising in the homogenization of equation \eqref{eq:introHJ}  with $H(x,p):=G(p)+V(x)$ and $a\equiv 1$ is not quasiconvex. 

\subsection{Outline of the paper.} In Section \ref{sec:result} we present the setting and the standing assumptions and we state our homogenization result, see Theorem \ref{thm:genhom}. In Section \ref{sec:outline} we describe the strategy we will follow to prove homogenization. In Section \ref{sec:correctors} we characterize the set of admissible $\lambda$ for which the corrector equation can be solved and we correspondingly show existence of (deterministic) solutions, with $\omega$ treated as a fixed parameter.  The derivatives of such solutions can be put in one-to-one correspondence with solutions of a corresponding ODE. We exploit this duality to show the existence of random correctors with maximal and minimal slope, for any admissible $\lambda$. 
Section \ref{sec:bridging} is addressed to prove Theorem \ref{teo lower bound}, where we provide a novel argument to suitably bridge a pair of distinct stationary solution of the same ODE. In Section \ref{sec:homogenization}  we give the proof of our homogenization result. 
The paper ends with two appendices. In the first one, we have collected some results that we repeatedly use in the paper  concerning stationary sub and supersolutions of the ODEs herein considered. The second one contains some PDE results needed for our proofs.\medskip

\noindent{\textsc{Acknowledgements. $-$}} The author is a member of the INdAM Research Group GNAMPA.

\section{Preliminaries}\label{sec:preliminaries}
\subsection{Assumptions and statement of the main result}\label{sec:result}

We will denote by $\CC(\R)$ and $\CC(\R\times\R)$ the Polish spaces of continuous functions  on $\R$ and on $\R\times\R$, endowed with a metric inducing the topology of uniform convergence on compact subsets of $\R$ and of $\R\times\R$, respectively. 

The triple $(\Omega,\F, \P)$ denotes a probability space, where $\Omega$ is a Polish space, ${\cal F}$ is the $\sigma$-algebra of Borel subsets of $\Omega$, and $\P$ is a complete probability measure on $(\Omega,{\cal F})$.\footnote{The assumption that $\Omega$ is a Polish space and $\P$ is a complete probability measure is used in many points of the paper, but only to show joint measurability of the random objects therein introduce, see the proofs of Lemmas \ref{lemma lambda zero} and \ref{lemma measurable minimal sol}, of Theorem \ref{teo lower bound} and of Proposition \ref{prop D}.} We will denote by ${\cal B}$ the Borel $\sigma$-algebra on $\R$ and equip the product space $\R\times \Omega$ with the product $\sigma$-algebra ${\mathcal B}\otimes {\cal F}$.

We will assume that $\P$ is invariant under the action of a one-parameter group $(\tau_x)_{x\in\R}$ of transformations $\tau_x:\Omega\to\Omega$. More precisely, we assume that the mapping
$(x,\omega)\mapsto \tau_x\omega$ from $\R\times \Omega$ to $\Omega$ is measurable, $\tau_0=id$, $\tau_{x+y}=\tau_x\circ\tau_y$ for every $x,y\in\R$, and $\P\big(\tau_x (E)\big)=\P(E)$ for every $E\in{\cal F}$ and $x\in\R$. We will assume in addition that the action of $(\tau_x)_{x\in\R}$ is {\em ergodic}, i.e., any measurable function $\varphi:\Omega\to\R$ satisfying $\P(\varphi(\tau_x\omega) = \varphi(\omega)) = 1$ for every fixed $x\in{\R}$ is almost surely equal to a constant.
 If $\varphi\in L^1(\Omega)$, we write $\EE(\varphi)$ for the mean of $\varphi$ on $\Omega$, i.e. the quantity
$\int_\Omega \varphi(\omega)\, d \P(\omega)$.

A measurable function $f:\R\times \Omega\to \R$ is said to be {\em stationary} with respect to $(\tau_x)_{x\in\R}$ if  $f(x+y,\omega)=f(x,\tau_y\omega)$ for every $x,y\in\R$ and $\omega\in\Omega$.\smallskip 

In this paper, we will consider an equation of the form 
\begin{equation}\label{eq:generalHJ}
\partial_t u=a(x,\omega) \partial^2_{xx} u +H(x,\partial_x u,\omega),\quad\hbox{in}\ (0,+\infty)\times\R,
\end{equation}
where $a:\R\times\Omega\to [0,1]$ is a stationary function satisfying the following assumptions  
for some constant $\kappa > 0$:
\begin{itemize}
\item[(A1)] $a(\cdot,\omega)>0$ on $\R$ for $\P$-a.e. $\omega\in\Omega$;\smallskip
\item[(A2)]  $\sqrt{a(\,\cdot\,,\omega)}:\R\to [0,1]$\ is $\kappa$--Lipschitz continuous for all $\omega\in\Omega$.\footnote{
Note that (A2) implies that $a(\cdot,\omega)$ is $2\kappa$--Lipschitz in $\R$ for all $\omega\in\Omega$.  Indeed, 
for all $x,y\in\R$ we have 
\[
|a(x,\omega) - a(y,\omega)| = |\sqrt{a(x,\omega)} + \sqrt{a(y,\omega)}||\sqrt{a(x,\omega)} - \sqrt{a(y,\omega)}| \leqslant 2\kappa|x-y|.
\]
}
\end{itemize}
As for the Hamiltonian $H:\R\times\R\times\Omega\to\R$, we will assume that it is stationary with respect to shifts in $x$ variable, i.e., $H(x+y,p,\omega)=H(x,p,\tau_y\omega)$ for every $x,y\in\R$, $p\in\R$ and $\omega\in\Omega$, and that it belongs to the class  $\Ham$ defined as follows. 
\begin{definition}\label{def:Ham}
	A function $H:\R\times\R\times\Omega\to\R$ is said to be in the class $\Ham$ if it satisfies the following conditions, for some constants $\alpha_0,\alpha_1>0$ and $\gamma>1$:
	\begin{itemize}
		\item[(H1)] \quad $\alpha_0|p|^\gamma-1/\alpha_0\leqslant H(x,p,\omega)\leqslant\alpha_1(|p|^\gamma+1)$\quad for all ${(x,p,\omega)\in\R\times\R\times\Omega}$;\medskip
		\item[(H2)]\quad $|H(x,p,\omega)-H(x,q,\omega)|\leqslant\alpha_1\left(|p|+|q|+1\right)^{\gamma-1}|p-q|$\quad for all $(x,\omega)\in\R\times\Omega$ and $p,q\in\R$;\medskip
	\item[(H3)] \quad $|H(x,p,\omega)-H(y,p,\omega)|\leqslant\alpha_1(|p|^\gamma+1)|x-y|$\quad 
	for all $x,y,p\in\R$ and $(p,\omega)\in\R\times\Omega$.\medskip		
\end{itemize}
We will denote by $\Hamall$ the union of the families $\Ham$, where $\alpha_0,\alpha_1$ vary in $(0,+\infty)$ and $\gamma$ in $(1,+\infty)$. 
\end{definition}

Solutions, subsolutions and supersolutions of \eqref{eq:generalHJ}
will be always understood in the viscosity sense, see \cite{CaCa95, users,barles_book,bardi}, and implicitly assumed continuous, without any further specification. Assumptions (A2) and (G1)-(G3)
guarantee well-posedness in $\D{UC}(\ccyl)$ of the Cauchy problem for
the parabolic equation \eqref{eq:generalHJ}, as well as Lipschitz
estimates for the solutions under appropriate assumptions on the
initial condition, see Appendix \ref{app:PDE} for more
details. We stress that our results hold (with the same proofs) under any other set of assumptions apt to ensure the same kind of PDE results. 

The purpose of this paper is to prove the following homogenization result. 
\begin{theorem}\label{thm:genhom}
	Suppose $a$ satisfies (A1)-(A2) and $H\in\Hamall$. Then 
the viscous HJ equation \eqref{eq:introHJ} homogenizes, i.e., there exists a continuous and coercive function 
$\HV(H):\R\to\R$, called {\em effective Hamiltonian}, and a set $\hat\Omega$ of probability 1 such that, for every uniformly continuous function $g$ on $\R$ and every $\omega\in\hat\Omega$, the solutions $u^\epsilon(\cdot,\cdot,\omega)$ of \eqref{eq:introHJ} satisfying $u^\epsilon(0,\,\cdot\,,\omega) = g$ converge,
    locally uniformly on $[0, +\infty)\times \R$ as $\epsilon\to 0^+$, to the unique solution $\ol{u}$ of 
    \begin{eqnarray*}
   \begin{cases}
    \partial_t \ol{u} = \HV(H)(\partial_x\ol{u}) & \hbox{in $(0,+\infty)\times\R$}\\
    \ol{u}(0,\,\cdot\,) = g & \hbox{in $\R$}.
    \end{cases}
    \end{eqnarray*}
Furthermore, $\HV(H)$ is locally Lipschitz and superlinear. 
\end{theorem}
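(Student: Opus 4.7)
The plan is to prove homogenization by Evans' perturbed test function method, using the corrector-existence results from Sections \ref{sec:correctors} and \ref{sec:bridging} as the main new input.

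\textbf{Step 1: Construction and properties of $\HV(H)$.} For each admissible $\lambda$, Section \ref{sec:correctors} produces deterministic sublinear correctors of the cell equation $a(x,\omega) v'' + H(x, v', \omega) = \lambda$ whose derivatives correspond, via the ODE duality, to the maximal and minimal stationary solutions $w_\lambda^\pm(\cdot,\omega)$ of the associated first-order ODE. By the ergodic theorem, the asymptotic mean slopes $\theta^\pm(\lambda) := \EE\bigl(w_\lambda^\pm(0,\cdot)\bigr)$ are deterministic and nondecreasing in $\lambda$; Theorem \ref{teo lower bound} forces $\theta^-(\lambda) = \theta^+(\lambda)$ outside an at most countable set $E$ of exceptional $\lambda$. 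I would then set
\[
\HV(H)(\theta) := \inf\{\lambda \in \R : \theta^+(\lambda) \geqslant \theta\},
\]
so that the flat set of $\HV(H)$ is exactly the disjoint union of the bounded open intervals $\bigl(\theta^-(\lambda), \theta^+(\lambda)\bigr)$, $\lambda\in E$, while for every other $\theta$ a genuine corrector with mean slope $\theta$ exists. Coercivity and superlinearity follow from (H1); local Lipschitz continuity follows from (H2) by viewing a corrector at slope $\theta$ as a near sub/supersolution for a nearby slope $\theta'$, with defect controlled by $\alpha_1(|\theta|+|\theta'|+1)^{\gamma-1}|\theta - \theta'|$.

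\textbf{Step 2: Perturbed test function.} By Appendix \ref{app:PDE} together with (A2) and (H1)--(H3), the family $\{u^\epsilon(\cdot,\cdot,\omega)\}_{\epsilon>0}$ is locally equi-Lipschitz and hence precompact in $\CC([0,+\infty)\times\R)$ on a full-probability set $\hat\Omega$. Fix any limit point $\bar u$, a smooth test function $\phi$ such that $\bar u - \phi$ attains a strict local maximum at $(t_0,x_0)$, and set $\theta := \partial_x\phi(t_0,x_0)$, $\lambda := \HV(H)(\theta)$. When $\theta$ is not in a flat interval, there is a corrector $v_\theta$ with $\bar v_\theta(y,\omega) := v_\theta(y,\omega) - \theta y$ sublinear at infinity almost surely. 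Setting
\[
\phi^\epsilon(t,x) := \phi(t,x) + \epsilon\, \bar v_\theta(x/\epsilon, \omega),
\]
evaluating the viscous HJ inequality for $u^\epsilon$ at a maximum point of $u^\epsilon - \phi^\epsilon$, and passing to the limit via sublinearity of $\bar v_\theta$ yields $\partial_t \phi(t_0,x_0) \leqslant \HV(H)(\theta)$. The symmetric argument with strict local minima gives the reverse inequality.

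\textbf{Step 3: Flat intervals, conclusion, and the main obstacle.} The principal difficulty, and where Theorem \ref{teo lower bound} is indispensable, is handling slopes $\theta$ inside a flat open interval $(\theta_-, \theta_+)$ of $\HV(H)$, where no corrector with mean slope exactly $\theta$ exists. In that regime, one exploits the minimal and maximal correctors $v_{\theta_-}, v_{\theta_+}$, which share the common eigenvalue $\lambda = \HV(H)(\theta)$ and whose derivatives have mean slopes $\theta_\mp$ bracketing $\theta$. Sandwiching $\phi$ between perturbed test functions built from $v_{\theta_-}$ and $v_{\theta_+}$, and using comparison for the viscous HJ equation together with the monotonicity in the slope afforded by stationarity, yields the same inequality. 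Choosing $\hat\Omega$ to serve simultaneously for a countable dense set of slopes (and invoking continuity of $\HV(H)$ for the rest) completes the argument; uniqueness for the homogenized equation upgrades subsequential convergence to the full limit.
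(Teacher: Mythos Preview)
Your overall strategy (Evans' perturbed test function method) is \emph{not} the route the paper takes. The paper never runs a perturbed test function argument; instead it reduces homogenization, via \cite[Lemma~4.1]{DK17}, to proving $\HV^L(H)(\theta)=\HV^U(H)(\theta)$ for every $\theta\in\R$, where these are the deterministic quantities \eqref{eq:infsup}. It then shows (Proposition~\ref{prop D}) that $D=\R$ by combining corrector existence on $\D{Im}(\Theta)$ with Theorem~\ref{teo lower bound} on the gap intervals, and finally appeals to Proposition~\ref{appB prop HF} for local Lipschitz continuity and superlinearity of $\HV(H)$. It also first reduces to $\gamma>2$ via Proposition~\ref{prop reduction}, since the H\"older estimate of Proposition~\ref{prop Holder estimate}---essential for the existence theory in Section~\ref{sec:correctors}---requires $\gamma>2$; your outline omits this reduction.

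More seriously, Step~3 contains a genuine gap. On a flat interval $(\theta_-,\theta_+)$ at level $\lambda$ there is \emph{no} corrector with mean slope $\theta$. If you plug in the corrector $v_{\theta_+}$, then $\bar v(y):=v_{\theta_+}(y,\omega)-\theta y$ grows linearly like $(\theta_+-\theta)y$, so $\epsilon\,\bar v(x/\epsilon,\omega)\to(\theta_+-\theta)x$, not $0$, and the maxima of $u^\epsilon-\phi^\epsilon$ do not localize near $(t_0,x_0)$; the ``sandwiching'' you describe therefore does not close. The paper circumvents this entirely: it never needs a sublinear object at slope $\theta$, because Proposition~\ref{prop corrector argument} and Theorem~\ref{teo lower bound} compare the solution $u_\theta$ with \emph{linear} initial datum $\theta x$ directly against explicit sub/super\-solutions whose antiderivatives asymptotically have slopes $\theta_-$ to the left and $\theta_+$ to the right (the bridging functions $g_\eps$), which dominates/is dominated by $\theta x$ after adding a constant. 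That device is specific to the $\HV^L=\HV^U$ formulation and has no obvious analogue in the local perturbed--test--function argument you sketch.

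Finally, your Step~1 misreads both the monotonicity and the content of Theorem~\ref{teo lower bound}. By Lemma~\ref{lemma monotonicity f lambda}, $\lambda\mapsto\EE[\ul f_\lambda(0,\cdot)]$ is \emph{decreasing} while $\lambda\mapsto\EE[\ol f_\lambda(0,\cdot)]$ is increasing, so ``$\theta^\pm$ nondecreasing'' is wrong and the formula $\HV(H)(\theta)=\inf\{\lambda:\theta^+(\lambda)\geqslant\theta\}$ fails for $\theta$ on the left branch. Theorem~\ref{teo lower bound} does \emph{not} force $\theta^-(\lambda)=\theta^+(\lambda)$; it gives the inequalities $\HV^L(H)(\theta)\geqslant\lambda$ and $\HV^U(H)(\theta)\leqslant\lambda$ on a gap interval under the hypothesis $\Solfrak(\mu;f_1,f_2)=\emptyset$. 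In the paper this hypothesis is \emph{verified} in Proposition~\ref{prop D} (together with the claim $\lambda_1=\lambda_2$) from the fact that the interval lies in the complement of $\D{Im}(\Theta)$---that logical step is absent from your outline.
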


We remark that the effective Hamiltonian $\HV(H)$ also depends on the diffusion coefficient $a$. 
Since $a$ will remain fixed throughout the paper, we will not keep track of this in our notation. 

\subsection{Description of our proof strategy}\label{sec:outline}

In this section, we outline the strategy that we will follow to prove the homogenization results stated in Theorem \ref{thm:genhom}.\smallskip

Let us denote by $u_\theta(\,\cdot\,,\,\cdot\,,\omega)$ the unique Lipschitz solution to \eqref{eq:generalHJ} with initial condition $u_\theta(0,x,\omega)=\theta x$ on $\R$, and let us introduce the following deterministic quantities, defined almost surely on $\Omega$, see \cite[Proposition 3.1]{DK22}:
\begin{eqnarray}\label{eq:infsup}
	\HV^L(H) (\theta):=\liminf_{t\to +\infty}\ \frac{u_\theta(t,0,\omega)}{t}\quad\text{and}\quad 
	\HV^U(H) (\theta):=\limsup_{t\to +\infty}\ \frac{u_\theta(t,0,\omega)}{t}.
\end{eqnarray}
In view of \cite[Lemma 4.1]{DK17}, proving homogenization amounts to showing that $\HV^L(H)(\theta) = \HV^U(H)(\theta)$ for every $\theta\in\R$. If this occurs, their common value is denoted by $\HV(H)(\theta)$. The function $\HV(H):\R\to\R$ is called the effective Hamiltonian associated with $H$. It has already appeared in the statement of Theorem \ref{thm:genhom}.\smallskip

The we first remark that we can reduce to prove Theorem  \ref{thm:genhom} for stationary Hamiltonians $H$ belonging to $\Hamall(\alpha_0,\alpha_1,\gamma)$ for constants $\alpha_0,\, \alpha_1>0$ and $\gamma>2$.  In fact, the following holds. 

\begin{prop}\label{prop reduction}
If Theorem \ref{thm:genhom} holds for every 
$\tilde H$ belonging to $\bigcup\limits_{_{\alpha_0,\alpha_1>0,\gamma>2}}\!\!\!\!\!\!\! \Ham$, then it holds for every
$H\in\Hamall$.
\end{prop}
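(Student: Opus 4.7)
We may assume the relevant $\gamma$ in the definition of $\Ham$ lies in $(1,2]$, as otherwise the statement is already contained in the hypothesis. Fix $\gamma'>2$ (say $\gamma'=3$) and introduce the perturbed Hamiltonian
$$
H_\delta(x,p,\omega):=H(x,p,\omega)+\delta|p|^{\gamma'},\qquad \delta\in(0,1].
$$
A direct check---based on $|p|^\gamma\le 1+|p|^{\gamma'}$ for (H1), on the mean-value bound $\bigl||p|^{\gamma'}-|q|^{\gamma'}\bigr|\le \gamma'(|p|+|q|+1)^{\gamma'-1}|p-q|$ for (H2), and on the $x$-independence of the perturbation for (H3)---shows that $H_\delta$ is stationary and belongs to $\mathscr{H}(\tilde\alpha_0,\tilde\alpha_1,\gamma')$ for some $\tilde\alpha_0,\tilde\alpha_1>0$ depending on $\delta$. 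By the hypothesis of the proposition, Theorem \ref{thm:genhom} applies to $H_\delta$, yielding a continuous, coercive, superlinear effective Hamiltonian $\HV(H_\delta)$.

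The crucial quantitative input is a Lipschitz estimate uniform in $\delta\in(0,1]$. Since every $H_\delta$ still satisfies the coercive lower bound $\alpha_0|p|^\gamma-1/\alpha_0$ inherited from $H$, the parabolic theory collected in Appendix \ref{app:PDE} produces a constant $L^*=L^*(L_0)$, depending only on $\alpha_0,\gamma$ and on the Lipschitz constant $L_0$ of the initial datum, such that the solutions of \eqref{eq:introHJ} and of its analogue with $H_\delta$ are both $L^*$-Lipschitz in $x$, uniformly in $\delta,\epsilon,\omega$. Combined with the pointwise inequality $0\le H_\delta-H\le \delta|p|^{\gamma'}$, a standard source-term viscosity comparison then yields
$$
0\le u^\epsilon_\delta(t,x,\omega)-u^\epsilon(t,x,\omega)\le t\,\delta\,(L^*)^{\gamma'}\qquad \hbox{for all $t>0$, $x\in\R$.}
$$

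Applying this with $g(x):=\theta x$ (so $L_0=|\theta|$), dividing by $t$ and taking $\limsup$, $\liminf$ as $t\to+\infty$ in \eqref{eq:infsup} gives
$$
\HV(H_\delta)(\theta)-\delta\,L^*(|\theta|)^{\gamma'}\le \HV^L(H)(\theta)\le \HV^U(H)(\theta)\le \HV(H_\delta)(\theta).
$$
Letting $\delta\to 0^+$ forces $\HV^L(H)(\theta)=\HV^U(H)(\theta)=:\HV(H)(\theta)$ for every $\theta\in\R$, which by \cite[Lemma 4.1]{DK17} is exactly the homogenization assertion for $H$. Since $\{\HV(H_\delta)\}_\delta$ is monotone in $\delta$ and uniformly bounded below by the coercive barrier $\alpha_0|\theta|^\gamma-1/\alpha_0$, Dini's theorem upgrades the pointwise convergence $\HV(H_\delta)\to\HV(H)$ to locally uniform convergence on $\R$, so $\HV(H)$ inherits continuity, local Lipschitz character, coercivity and superlinearity from its approximants. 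The main obstacle I anticipate is checking that the Lipschitz bound of the second paragraph is genuinely uniform in $\delta$---i.e.\ that Appendix \ref{app:PDE} provides an estimate depending only on the unperturbed coercivity data $(\alpha_0,\gamma)$ and not on the $\delta$-dependent upper bound constants $\tilde\alpha_1(\delta)$ of $H_\delta$; once this is in hand, the remainder is a clean monotone stability argument.
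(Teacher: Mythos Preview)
Your perturbation argument is correct, but the paper takes a shorter route that avoids any limit: it sets $\tilde H(x,p,\omega):=\max\{H(x,p,\omega),|p|^4-n\}$ with $n$ chosen so large that $\tilde H\equiv H$ on $\R\times[-K,K]\times\Omega$, where $K$ is the Lipschitz constant of $u_\theta$ for $|\theta|\le R$ furnished by Theorem \ref{appB teo well posed} applied to $H$. Then $u_\theta$ solves the $\tilde H$-equation \emph{exactly}, so $\HV^L(H)(\theta)=\HV^L(\tilde H)(\theta)$ and likewise for $\HV^U$, with no comparison or approximation needed; applying the hypothesis to $\tilde H\in\mathscr{H}(\tilde\alpha_0,\tilde\alpha_1,4)$ finishes. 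This sidesteps your flagged obstacle entirely. In your approach that obstacle is also harmless, though for a different reason than you anticipate: the inequality $u_\delta-u\le t\,\delta (L^*)^{\gamma'}$ only requires $|\partial_x u|\le L^*$ (so that $u+t\,\delta(L^*)^{\gamma'}$ is a supersolution of the $H_\delta$-equation, to which Proposition \ref{appB prop comparison} applies since $\partial_x u$ is bounded), and Theorem \ref{appB teo well posed} applied to $H$ itself gives this bound with no $\delta$-dependence; a $\delta$-uniform Lipschitz bound on $u_\delta$ is never actually needed. (Incidentally, for $\delta\in(0,1]$ the upper-growth constant $\tilde\alpha_1$ can be taken uniform; it is $\tilde\alpha_0\sim\delta$ that degenerates.)
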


\begin{proof}
Pick a stationary Hamiltonian $H$ in $\Hamall$. Let us fix $R>0$. For any $\theta\in [-R,R]$, we denote by $u_\theta(\cdot,\cdot,\omega)$  the unique Lipschitz solution to \eqref{eq:generalHJ} with initial condition $u_\theta(0,x,\omega)=\theta x$ on $\R$. According to Theorem \ref{appB teo well posed}, there exists a constant $K$, depending on $R>0$, such that $u_\theta(\cdot,\cdot,\omega)$ is $K$-Lipschitz on  $\ccyl$ for every $\omega\in\Omega$ and $|\theta|\leqslant R$. 
Let us now set \ $\tilde H(x,p,\omega):=\max\{H(x,p,\omega),|p|^4-n\}$ \ for all $(x,p,\omega)\in\R\times\R\times\Omega$, with $n\in\N$ chosen large enough so that\ \ $\tilde H\equiv H$\ \ on\ $\R\times [-K,K]\times\Omega$. This impies that the function 
$u_\theta(\cdot,\cdot,\omega)$ is also the unique Lipschitz solution to \eqref{eq:generalHJ} with $\tilde H$ in place of $H$ and initial condition $u_\theta(0,x,\omega)=\theta x$ on $\R$, for every $|\theta|\leqslant R$. Clearly, $\tilde H$ belongs to $\Hamall(\alpha_0,\alpha_1,\gamma)$ for suitable constants $\alpha_0,\alpha_1>0$ and $\gamma\geqslant 4$. Since by hypothesis Theorem \ref{thm:genhom} holds for $\tilde H$, in view of \cite[Lemma 4.1]{DK17} we have, for every $|\theta|\leqslant R$, 
\[
\HV^L(H) (\theta)
=
\liminf_{t\to +\infty}\ \frac{u_\theta(t,0,\omega)}{t}
=
\limsup_{t\to +\infty}\ \frac{u_\theta(t,0,\omega)}{t}
=
\HV^U(H) (\theta)\quad \hbox{almost surely}.
\]
This also implies that 
\begin{equation}\label{eq same HV}
\HV(H)(\theta)=\HV(\tilde H)(\theta)\qquad\hbox{for every $|\theta|\leqslant R$.}
\end{equation} 
By arbitrariness of $R>0$, we derive that equation \eqref{eq:introHJ} homogenizes for $H$ as well. The fact that $\HV(H)$ is superlinear and locally Lipschitz follows from Proposition \ref{appB prop HF}.
\end{proof}

Let us describe our strategy to prove Theorem  \ref{thm:genhom}. In order to prove that $\HV^L(\theta)=\HV^U(\theta)$ for all $\theta\in\R$, 
we will adopt the approach that was taken in \cite{KYZ20, DK22} and substantially  developed in \cite{Y21b}.
It consists in showing the existence of a viscosity Lipschitz solution $u(x,\omega)$ with stationary derivative
for the following stationary equation associated with \eqref{eq:generalHJ}, namely 
\begin{equation}\label{eq:cellPDE}
	a(x,\omega)u''+H(x,u',\omega)=\lambda\qquad\hbox{in $\R$}
		\tag{HJ$_\lambda$}
\end{equation}
for every $\lambda\in\R$ and for $\P$-a.e. $\omega\in\Omega$. With a slight abuse of terminology, we will call such solutions {\em (random) correctors}  in the sequel for the role they play in homogenization.\footnote{The word {\em corrector} is usually used in literature to refer to the function $u(x,\omega)-\theta\,x$\ with\ $\theta:=\EE[u'(0,\omega)]$, see for instance \cite{CaSo17} for a more detailed discussion on the topic.} 
In fact, the following holds.

\begin{prop}\label{prop consequence existence corrector}
Let $\lambda\in\R$ such that equation \eqref{eq:cellPDE} admits a viscosity solution $u$ with stationary gradient. 
Let us set $\theta:=\EE[u'(0,\omega)]$.  Then \ $\HV^L(\theta)=\HV^U(\theta)=\lambda$. 
\end{prop}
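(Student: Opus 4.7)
The plan is to derive, from the corrector $u$ and the sublinearity of $u(\cdot,\omega)-\theta\,\cdot\,$ provided by the ergodic theorem, two-sided global barriers for the solution $u_\theta(\cdot,\cdot,\omega)$ of \eqref{eq:generalHJ} with linear initial datum $\theta x$. Comparing $u_\theta$ with these barriers and sending $t\to+\infty$ will simultaneously yield $\HV^U(H)(\theta)\le\lambda$ and $\HV^L(H)(\theta)\ge\lambda$, whence, together with the trivial $\HV^L\le\HV^U$, the equality to $\lambda$.

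Since $H\in\Hamall$ and $a$ satisfies (A1)--(A2), the a priori Lipschitz estimates for solutions of \eqref{eq:cellPDE} (Appendix \ref{app:PDE}) provide a deterministic constant $L>0$ such that $\|u'(\cdot,\omega)\|_\infty\le L$ almost surely; in particular the stationary process $u'(\cdot,\omega)-\theta$ lies in $L^1(\P)$ and has zero mean. Birkhoff's theorem then gives
$$
\lim_{|x|\to+\infty}\frac{u(x,\omega)-\theta x}{|x|}=0\qquad\hbox{almost surely,}
$$
so that for every $\eta>0$ there is a random constant $C_0=C_0(\eta,\omega)$ with $|u(x,\omega)-\theta x|\le\eta|x|+C_0$ for every $x\in\R$.

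Now fix $\eta>0$ and pick a smooth function $\phi:\R\to\R$ with $\phi(x)\ge|x|$, $\|\phi'\|_\infty\le 1$ and $\|\phi''\|_\infty\le M$ (for instance $\phi(x)=\sqrt{1+x^2}$). Define
$$
v^\pm(t,x,\omega):=u(x,\omega)+\lambda t\pm\bigl(\eta\,\phi(x)+\kappa_\eta t+C_0(\eta,\omega)\bigr),\qquad \kappa_\eta:=\alpha_1(2L+2)^{\gamma-1}\eta+M\eta.
$$
By (H2), $|H(x,u'(x,\omega)\pm\eta\phi'(x),\omega)-H(x,u'(x,\omega),\omega)|\le\alpha_1(2L+2)^{\gamma-1}\eta$, while $|a(x,\omega)\,\eta\phi''(x)|\le M\eta$. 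A routine test-function computation (adding a smooth perturbation to the viscosity equation \eqref{eq:cellPDE} satisfied by $u$) then shows that $v^+$ is a viscosity supersolution and $v^-$ a viscosity subsolution of \eqref{eq:generalHJ} on $\cyl$. At $t=0$, the choice of $C_0$ ensures $v^-(0,x,\omega)\le\theta x\le v^+(0,x,\omega)$; the comparison principle for Lipschitz sub-/supersolutions with linearly growing initial data (Appendix \ref{app:PDE}) then delivers $v^-\le u_\theta\le v^+$ on $\ccyl$.

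Evaluating at $x=0$, dividing by $t>0$, and letting $t\to+\infty$, we obtain
$$
\lambda-\kappa_\eta\le\HV^L(H)(\theta)\le\HV^U(H)(\theta)\le\lambda+\kappa_\eta.
$$
Since $\kappa_\eta\to 0$ as $\eta\downarrow 0$, the three quantities coincide and equal $\lambda$. The two points deserving care are the deterministic Lipschitz bound $\|u'(\cdot,\omega)\|_\infty\le L$---which leans on the superlinearity (H1) and the nondegeneracy (A1)---and the applicability of comparison to initial data of linear growth; both are standard once the PDE machinery of Appendix \ref{app:PDE} is in place.
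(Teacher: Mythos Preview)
Your argument is correct and follows the same route as the paper's proof, which simply cites \cite[Lemma 5.6]{DK22} after recording the sublinearity of $F_\theta(x,\omega)=u(x,\omega)-\theta x$; you have essentially written out that cited argument in a self-contained way, with the $\eta\phi$ perturbation being the standard device to turn the merely sublinear discrepancy $F_\theta$ into a genuine ordering of initial data so that Proposition~\ref{appB prop comparison} applies. One cosmetic remark: the Lipschitz bound on $u$ from Proposition~\ref{prop regularity solutions} does not actually require the nondegeneracy (A1) (only (A2) and $H\in\Ham$ are used there); (A1) is what upgrades $u$ to $C^2$ and makes your barrier computation classical rather than viscosity, but either reading works.
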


\begin{proof}
Let us set $F_\theta(x,\omega):=u(x,\omega)-\theta x$ for all $(x,\omega)\in\R\times\Omega$. Then $F_\theta(\cdot,\omega)$ 
is sublinear for $\P$-a.e. $\omega\in\Omega$, see for instance \cite[Theorem 3.9]{DS09}. Furthermore, it is a viscosity solution of \eqref{eq:cellPDE} with $H(\cdot,\theta+\cdot,\cdot)$ in place of $H$. The assertion follows arguing as in the proof of \cite[Lemma 5.6]{DK22}.
\end{proof}

The first step consists in identifying the set of $\landa\in\R$ for which equation \eqref{eq:cellPDE} can be solved, for every fixed $\omega\in\Omega$. In Section \ref{sec:correctors} we will show that this set is equal to the half-line $[\landa_0(\omega),+\infty)$, where $\landa_0(\omega)$ is a critical constant suitably defined. Furthermore, we will show the existence of a Lipschitz viscosity solution $u_{\lambda}(\cdot,\omega)$ to \eqref{eq:cellPDE} for every $\landa\geqslant\landa_0(\omega)$. Differently from previous works on the subject \cite{KYZ20,DK22,Y21b,DKY23,D23a}, such a critical constant is associated  to the pair $(a,H)$ in an intrinsic way. Indeed, when $a\equiv 0$ or when $H$ is of the form $G(p)+V(x,\omega)$ with $(a,V)$ satisfying the scaled hill condition, $\lambda_0$ is always equal to $\sup_{x\in\R}\min_{p\in\R} H(x,p,\omega)$, while in our case $\lambda_0$ is in general strictly less than this quantity. For the results presented in Section \ref{sec:correctors}, we use in a crucial way the fact that $H$ belongs to $\Ham$ with $\gamma>2$. This will allow us to use known H\"older regularity results for continuous supersolutions of equation \eqref{eq:cellPDE}, see Proposition \ref{prop Holder estimate} for more details. The stationarity of $a$ and $H$ and the ergodicity assumption imply that $\landa_0(\omega)$ is almost surely equal to a constant  $\lambda_0$.  

The subsequent step consists in showing existence of random correctors associated with equation \eqref{eq:cellPDE} for every 
$\lambda\geqslant \lambda_0$. To this aim, we first remark that, due to the almost sure non degenerating condition $a(\cdot,\omega)>0$ in $\R$,  the derivatives of such correctors are in one-to-one correspondence with stationary solutions of the following ODE  
\begin{equation}\label{eq ODE lambda}
a(x,\omega)f'+H(x,f,\omega)=\lambda\qquad\hbox{in $\R$.}
\tag{ODE$_\lambda$}
\end{equation}
We prove that the set $\Sol_\lambda$ of stationary functions $f:\R\times\Omega\to\R$ which solve \eqref{eq ODE lambda} 
almost surely is nonempty  for every $\lambda\geqslant \lambda_0$. This is obtained by showing that the random functions defined as the pointwise maximum and minimum of deterministic solutions of \eqref{eq ODE lambda}, with $\omega$ treated as a fixed parameter, are elements of $\Sol_\lambda$, see Lemma \ref{lemma measurable minimal sol}. 

We then proceed by defining the set-valued map $\Theta:[\lambda_0,+\infty)\to\R$ as
\[
\Theta(\lambda):=\{\EE[f(0,\cdot)]\,:\,f\in\Sol_\lambda\}
\qquad
\hbox{for all $\lambda\geqslant \lambda_0$}.
\] 
The multifunction $\Theta$ is injective, locally equi--compact and upper semicontinuous, in the sense of set-valued analysis.
Furthermore, its image $\D{Im}\left(\Theta\right)$  is contained in the set $D:=\{\theta\in\R\,:\,\HV^L(H)(\theta)=\HV^U(H)(\theta)\}$, in view of Proposition \ref{prop consequence existence corrector}. If $\Theta$ is surjective, then the effective Hamiltonian $\HV(H)$ can be defined as the inverse of $\Theta$, i.e. $\HV(H)(\theta)=\lambda$, where $\lambda$ is the unique real number in $[\lambda_0,+\infty)$ such that $\theta\in\Theta(\lambda)$, for every fixed $\theta\in\R$. Yet, $\Theta$ need not be surjective. To take care of this possibility, we first show that $\D{Im}\left(\Theta\right)$ is a closed subset of $\R$ and, due to the property (v) in Proposition \ref{prop Theta},  any connected component of $\R\setminus\D{Im}\left(\Theta\right)$ is a bounded open interval of the form $(\theta_1,\theta_2)$. Next, we show that this can happen only if $\theta_1$ and $\theta_2$ belong to $\Theta(\lambda)$ for the same $\lambda\geqslant \landa_0$. If $f_1<f_2$ are the stationary random functions in $\Sol_\landa$ such that $\EE[f_i(0,\cdot)]=\theta_i$ for $i\in\{1,2\}$,  the condition that $(\theta_1,\theta_2)$ is in the complement of $\D{Im}\left(\Theta\right)$ implies that the set $\Solfrak(\mu; f_1,f_2)$ of stationary functions in $\Sol_\mu$ almost surely trapped between $f_1$ and $f_2$ is empty for any $\mu\geqslant \lambda_0$, in view of Proposition \ref{prop consequence existence corrector}. This implies that 
\begin{equation}\label{eq flat part}
\lambda\leqslant\HV^L(H)(\theta)\leqslant\HV^U(H)(\theta)\leqslant\lambda\qquad\hbox{for every $\theta\in (\theta_1,\theta_2)$},
\end{equation}
namely, $(\theta_1,\theta_2)$ corresponds to a flat part of the effective Hamiltonian $\HV(H)$. We conclude that $D=\R$ and the effective Hamiltonian $\HV(H)$ can be again defined as the inverse of $\Theta$, as it was explained before. 

In order to prove the first (respectively, last) inequality in \eqref{eq flat part}, we aim to show that, for every fixed $\omega$ in a set of probability $1$ and for every $\eps>0$,  there exists a $C^1$ function $g_\eps$ that bridges $f_2(\cdot,\omega)$ with $f_1(\cdot,\omega)$ (resp.,  $f_1(\cdot,\omega)$ with $f_2(\cdot,\omega)$) in such a way to be a classical supersolution (resp., subsolution) of \eqref{eq ODE lambda} with $\lambda-\eps$ (resp., $\lambda+\eps$) in place of $\lambda$, see Proposition \ref{prop corrector argument} for more details. This argument already appeared in \cite{DK22} and was implemented in  \cite{Y21b, DKY23, D23a} for Hamiltonians of the form $H(x,p,\omega):=G(p)+V(x,\omega)$ by showing that $\inf_\R \left(f_2-f_1\right)=0$ almost surely. The proof of this latter property crucially relies on the assumption that the pair $(a,V)$ satisfies either a scaled hill or valley condition, see Remark \ref{oss lower bound} for further comments. This point is handled here with a novel idea, corresponding to Theorem \ref{teo lower bound}. It exploits the fact that 
$\Solfrak(\mu; f_1,f_2)=\emptyset$ for every $\mu<\lambda$ (resp., $\mu>\lambda$)  to construct a lift which allows to gently descend (resp., ascend)   
from $f_2(\cdot,\omega)$ to $f_1(\cdot,\omega)$ (resp.,  from $f_1(\cdot,\omega)$ to $f_2(\cdot,\omega)$).\smallskip 

We end this section with a disclaimer. As already precised in the previous section, sub and supersolutions of equation \eqref{eq:cellPDE} are understood in the viscosity sense and will be implicitly assumed continuous, without any further specification. We want to remark that, due to the positive sign of the diffusion term $a$, a viscosity supersolution (repectively, subsolution) $u$ to \eqref{eq:cellPDE} that is twice differentiable at $x_0$ will satisfy the  inequality
\[
a(x_0,\omega)u''(x_0,\omega)+H(x_0,u'(x_0),\omega)\leqslant \lambda.\qquad\hbox{(resp.,\  \ $\geqslant \lambda$.)}
\vspace{0.5ex}
\]

\section{Existence of correctors}\label{sec:correctors}

In this section we will characterize the set of admissible $\landa\in\R$ for which the corrector equation 
\begin{equation}\label{eq cellPDE}
	a(x,\omega)u''+H(x,u',\omega)=\lambda\qquad\hbox{in $\R$}
		\tag{HJ$_\lambda$}
\end{equation}
admits solutions. 
In this section we assume that $H$ is a stationary Hamiltonian belonging to the class  $\Ham$ for some fixed constants $\alpha_0,\alpha_1>0$ and $\gamma>2$, for every $\omega\in\Omega$. \smallskip 

We will start by regarding at \eqref{eq cellPDE} as a deterministic equation, where $\omega$ is treated as a fixed parameter. The first step consists in characterizing the set of real $\lambda$ for which equation \eqref{eq cellPDE} admits (deterministic) viscosity solutions. To this aim, for every fixed $\omega\in\Omega$, we define a critical value associated with \eqref{eq cellPDE} defined as follows: 
\begin{equation}\label{def lambda zero}
\lambda_0(\omega):=\inf \left \{ \lambda\in\R\,:\,\hbox{equation \eqref{eq cellPDE} admits a continuous viscosity supersolution}\right\}.
\end{equation}
It can be seen as the lowest possible level under which supersolutions of equation \eqref{eq cellPDE} do not exist at all. 
Such a kind of definition is natural and appears in literature with different names, according to the specific problem for which it is introduced: ergodic constant, Ma\~ne critical value, generalized principal eigenvalue. In the inviscid stationary ergodic setting, it appears in this exact form in \cite{DS11}, 
see also \cite{IsSic20, CDD23} for an analogous definition in the noncompact deterministic setting.\smallskip 

%
%

It is easily seen that $\lambda_0(\omega)$ is uniformly bounded from above on $\Omega$. 
Indeed, the function $u\equiv 0$ on $\R$ is a classical supersolution of \eqref{eq cellPDE} with 
$\lambda:=\sup_{(x,\omega)} H(x,0,\omega)\leqslant \alpha_1$. Hence we derive the upper bound
\begin{equation}\label{eq upper bound lambda zero}
\lambda_0(\omega)\leqslant \alpha_1\qquad\hbox{for all $\omega\in\Omega$.}
\end{equation}

We proceed to show that $\lambda_0(\omega)$ is also uniformly bounded from below and that equation \eqref{eq cellPDE} admits a solution for every $\omega\in\Omega$ and $\lambda\geqslant\lambda_0(\omega)$. For this, we need a preliminary lemma first.  

\begin{lemma}\label{lemma Busemann function}
Let $\omega\in\Omega$ and $\lambda>\lambda_0(\omega)$ be fixed. For every fixed $y\in\R$, there exists a viscosity supersolution $w_y$ of \eqref{eq cellPDE} satisfying $w_y(y)=0$ and 
\begin{equation}\label{claim visco elsewhere}
a(x,\omega)u''+H(x,u',\omega)=\lambda\qquad\hbox{in $\R\setminus\{y\}$}
\end{equation}
in the viscosity sense. In particular 
\begin{equation}\label{eq2 Holder bounds}
|w_y(x)-w_y(z)| \leqslant K |x-z|^{\frac{\gamma-2}{\gamma-1}}\qquad\hbox{for all $x,z\in\R$,}
\end{equation}
where $K=K(\alpha_0,\gamma,\lambda)>0$ is given explicitly by \eqref{eq Holder bound}. 
\end{lemma}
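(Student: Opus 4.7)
The plan is to obtain $w_y$ by a Perron-type infimum construction. Since $\lambda > \lambda_0(\omega)$, the very definition \eqref{def lambda zero} of $\lambda_0(\omega)$ furnishes a continuous viscosity supersolution $v$ of \eqref{eq cellPDE} on $\R$, and then $v - v(y)$ is still a supersolution of \eqref{eq cellPDE} and vanishes at $y$. Hence the family
\[
\mathcal{S}_y := \{u \in C(\R) \,:\, u \text{ is a viscosity supersolution of \eqref{eq cellPDE} with } u(y) = 0\}
\]
is nonempty, and I would define the candidate
\[
w_y(x) := \inf\{u(x)\,:\, u \in \mathcal{S}_y\}, \qquad x \in \R.
\]

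To see that $w_y$ is finite and Hölder continuous, I would invoke Proposition \ref{prop Holder estimate}, whose applicability requires exactly the standing hypothesis $\gamma > 2$. It provides, for every continuous viscosity supersolution of \eqref{eq cellPDE} at level $\lambda$, a Hölder estimate of the form $|u(x) - u(z)| \leqslant K|x-z|^{(\gamma-2)/(\gamma-1)}$ with $K = K(\alpha_0, \gamma, \lambda)$ depending only on the displayed parameters. Since each $u \in \mathcal{S}_y$ satisfies $u(y) = 0$, this upgrades to the uniform pointwise bound $|u(x)| \leqslant K|x-y|^{(\gamma-2)/(\gamma-1)}$, so $w_y$ is finite on $\R$ and inherits the same modulus of continuity (a pointwise infimum of functions sharing a common modulus inherits that modulus as soon as it is finite). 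In particular $w_y(y)=0$ and \eqref{eq2 Holder bounds} holds.

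That $w_y$ itself is a viscosity supersolution of \eqref{eq cellPDE} is then the standard stability statement: the infimum of a family of continuous supersolutions, once known to be continuous, is again a supersolution. To promote $w_y$ to a solution on $\R \setminus \{y\}$ I would argue by contradiction in the usual Perron fashion. Suppose $w_y$ fails the subsolution inequality at some $x_0 \neq y$; then there is a smooth test function $\varphi$ touching $w_y$ from above at $x_0$ with $a(x_0,\omega)\varphi''(x_0) + H(x_0,\varphi'(x_0),\omega) > \lambda$. By continuity, $\varphi - \delta$ is a classical strict supersolution of \eqref{eq cellPDE} in a small ball $B_r(x_0)$ for $\delta > 0$ sufficiently small; by shrinking $r$ we can also ensure $y \notin \overline{B_r(x_0)}$. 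The function equal to $\min(w_y, \varphi - \delta)$ on $B_r(x_0)$ and to $w_y$ elsewhere is then a continuous viscosity supersolution of \eqref{eq cellPDE} which still vanishes at $y$ (the modification is local and disjoint from $y$) yet is strictly smaller than $w_y$ at $x_0$, contradicting the minimality of $w_y$ in $\mathcal{S}_y$. This yields the solution property \eqref{claim visco elsewhere}.

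The delicate ingredient is clearly the Hölder step: one needs Proposition \ref{prop Holder estimate} to deliver a single constant $K$ depending only on $(\alpha_0,\gamma,\lambda)$, uniform across the whole family $\mathcal{S}_y$. Without such uniformity the infimum could fail to be finite or continuous and the whole Perron machinery collapses. The remaining steps—nonemptyness of $\mathcal{S}_y$, stability of supersolutions under infima, and the bump-down argument localized away from $y$—are routine once this quantitative control is in place.
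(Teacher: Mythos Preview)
Your approach is essentially identical to the paper's: take the pointwise infimum over the family of continuous supersolutions normalized at $y$, use the uniform H\"older estimate of Proposition~\ref{prop Holder estimate} to guarantee finiteness and continuity, and run a Perron bump-down argument localized away from~$y$. The only slip is a sign error in the Perron step: failure of the subsolution test at $x_0$ means the supertangent $\varphi$ satisfies $a(x_0,\omega)\varphi''(x_0)+H(x_0,\varphi'(x_0),\omega)<\lambda$ (not $>\lambda$), which is precisely what makes $\varphi-\delta$ a local strict \emph{supersolution} and allows $\min\{w_y,\varphi-\delta\}$ to be a competitor in $\mathcal S_y$ strictly below $w_y$ at $x_0$.
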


\begin{proof}
Let us denote by $\superSol(\lambda)(\omega)$ the family of continuous viscosity supersolutions of equation 
\eqref{eq cellPDE}. This set is nonempty since $\lambda>\lambda_0(\omega)$. In view of Proposition \ref{prop Holder estimate}, we know that these functions satisfy \eqref{eq2 Holder bounds} for a common constant $K=K(\alpha_0,\gamma,\lambda)>0$ given explicitly by \eqref{eq Holder bound}. 
Let us set 
\[
w_y(x):=\inf\left\{ w\in\CC(\R)\,:\,w\in \superSol(\lambda)(\omega),\ w(y)=0\right\}
\qquad
\hbox{for all $x\in\R$.}
\]
The function $w_y$ is well defined, it satisfies \eqref{eq2 Holder bounds}  and $w_y(y)=0$.  As an infimum of viscosity supersolutions of \eqref{eq cellPDE}, it is itself a supersolution. Let us show that $w_y$ solves \eqref{claim visco elsewhere} in the viscosity sense. If this where not the case, there would exist a $C^2$ strict supertangent $\varphi$ to $w_y$ at a point $x_0\not=y$\,\footnote{i.e., $\varphi>w_y$ in $\R\setminus\{x_0\}$ and $\varphi(x_0)=w_y(x_0)$.} such that \ $a(x_0,\omega)\varphi''(x_0)+H(x_0,\varphi'(x_0),\omega)<\lambda$. By continuity, we can pick $r>0$ small enough so that $|y-x_0|>r$ and 
\begin{equation}\label{eq failed test}
a(x,\omega)\varphi''(x)+H(x,\varphi'(x),\omega)<\lambda
\qquad
\hbox{for all $x\in (x_0-r,x_0+r)$.}
\end{equation}
Choose $\delta>0$ small enough so that 
\begin{equation}\label{eq boundary data}
\varphi(x_0+r)-\delta>w_y(x_0+r)\qquad\hbox{and}\qquad \varphi(x_0-r)-\delta>w_y(x_0-r).
\end{equation}
Let us set \ $\tilde w(x):=\min\{\varphi(x)-\delta,w_y(x)\}$ \ for all $x\in\R$. 
The function $\tilde w$ is the minimum of two viscosity supersolution of \eqref{eq cellPDE} in $(x_0-r,x_0+r)$, in view of \eqref{eq failed test}, and it agrees with $w_y$ in $\R\setminus [x_0-\rho,x_0+\rho]$ for a suitable $0<\rho<r$, in view of \eqref{eq boundary data}. We infer that $\tilde w\in\superSol$ with $\tilde w(y)=0$. But this contradicts the minimality of $w_y$ since $\tilde w(y)=\varphi(y)-\delta<w_y(y)$. 
\end{proof}

With the aid of Lemma \ref{lemma Busemann function}, we can now prove the following existence result. 

\begin{theorem}\label{teo existence solutions}
Let $\omega\in\Omega$ and $\lambda\geqslant \lambda_0(\omega)$ be fixed. Then there exists a viscosity solution $u$ to 
\eqref{eq cellPDE}. Furthermore $u$ is $K$-Lipschitz and of class $C^2$ on $\R$, where the constant  $K=K(\alpha_0,\alpha_1,\gamma,\kappa,\lambda)>0$ is given explicitly by \eqref{eq Lipschitz bound}. In particular, 
$\lambda_0(\omega)\geqslant \min H\geqslant -{1}/{\alpha_0}$. 

\begin{proof}
Let us first assume $\lambda>\lambda_0(\omega)$. 
Let us pick a sequence of points $(y_n)_n$ in $\R$ with $\lim_n |y_n|=+\infty$ and, for each $n\in\N$, set $w_n(\cdot):=w_{y_n}(\cdot)-w_{y_n}(0)$ on $\R$, where $w_{y_n}$ is the function provided by Lemma \ref{lemma Busemann function}. Accordingly, we know that the functions $w_n$ are equi-H\"older continuous and hence locally equi--bounded on $\R$ since $w_n(0)=0$ for all $n\in\N$. By the Ascoli-Arzel\`a Theorem, up to extracting a subsequence, the functions $w_n$ converge in $\CC(\R)$ to a limit function $u$. For every fixed bounded interval $I$, the functions $w_n$ are viscosity solutions of \eqref{eq cellPDE} for all $n\in\N$ big enough since $|y_n|\to +\infty$ as $n\to+\infty$, and so is $u$ by the stability of the notion of viscosity solution. We can now apply Proposition \ref{prop regularity solutions} to infer that $u$ is $K$--Lipschitz continuous and of class $C^2$ on $\R$. In particular
\[
\lambda
= 
a(x,\omega)u''+H(x,u',\omega) 
\geqslant 
a(x,\omega)u''+\min H
\qquad
\hbox{for all $x\in\R$.}
\]
Being $u'$ of class $C^1$ and bounded, we must have $\inf_\R |u''(x)|=0$, hence $\lambda\geqslant \min H$. This readily implies $\lambda_0(\omega)\geqslant \min H\geqslant -1/\alpha_0$.  

Let us now choose a decreasing sequence of real numbers $(\lambda_n)_n$ converging to $\lambda_0(\omega)$ and, for each $n\in\N$, let $u_n$ be a viscosity solution of \eqref{eq cellPDE} with $u_n(0)=0$. In view of Proposition \ref{prop regularity solutions}, the functions $(u_n)_n$ are equi-Lipschitz and thus locally equi--bounded on $\R$, hence they converge, along a subsequence, to a function $u$ in $\CC(\R)$. By stability, $u$ is a solution of \eqref{eq cellPDE} with $\lambda:=\lambda_0(\omega)$. The Lipschitz and regularity properties of $u$ are again a consequence of Proposition \ref{prop regularity solutions}.
\end{proof}
\end{theorem}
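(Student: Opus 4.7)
The plan is to use Lemma \ref{lemma Busemann function} to produce solutions of \eqref{eq cellPDE} on all of $\R$ by sending the singular point to infinity, handling the supercritical case $\lambda > \lambda_0(\omega)$ first and then recovering the critical case $\lambda = \lambda_0(\omega)$ by a second compactness argument.

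For $\lambda > \lambda_0(\omega)$, I would pick any sequence $(y_n) \subset \R$ with $|y_n| \to +\infty$ and let $w_{y_n}$ be the function given by Lemma \ref{lemma Busemann function}: it is a viscosity supersolution on $\R$ and in fact a viscosity solution of \eqref{eq cellPDE} on $\R \setminus \{y_n\}$. Normalizing $w_n := w_{y_n} - w_{y_n}(0)$ preserves both properties and yields $w_n(0) = 0$; combined with the uniform H\"older estimate \eqref{eq2 Holder bounds}, the sequence is equi-H\"older and locally equi-bounded. By Ascoli-Arzel\`a, a subsequence converges in $\CC(\R)$ to a limit $u$. Fix any bounded interval $I \subset \R$: since $|y_n| \to +\infty$, we have $y_n \notin I$ for all large $n$, so $w_n$ is a viscosity solution of \eqref{eq cellPDE} on $I$. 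Stability of the viscosity notion under locally uniform convergence then gives that $u$ solves \eqref{eq cellPDE} on $I$, and letting $I$ exhaust $\R$ shows $u$ is a solution on $\R$. Proposition \ref{prop regularity solutions} upgrades $u$ to a $K$-Lipschitz, $C^2$ solution with the explicit constant $K$.

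For the lower bound on $\lambda_0(\omega)$, I would use the classical identity $a(x,\omega) u''(x) + H(x,u'(x),\omega) = \lambda$ valid pointwise on $\R$. Since $u$ is Lipschitz and $u' \in C^1$ is bounded, $u''$ cannot be bounded away from $0$ in absolute value (otherwise $u'$ would be strictly monotone and unbounded); so I can pick a sequence $x_k \in \R$ with $u''(x_k) \to 0$. Using $a \in [0,1]$ and (H1),
\[
\lambda = a(x_k,\omega) u''(x_k) + H(x_k,u'(x_k),\omega) \geqslant a(x_k,\omega) u''(x_k) - 1/\alpha_0 \xrightarrow[k\to\infty]{} \ \text{at least}\ \min H.
\]
This gives $\lambda \geqslant \min H \geqslant -1/\alpha_0$ for every $\lambda > \lambda_0(\omega)$, and taking the infimum yields $\lambda_0(\omega) \geqslant \min H \geqslant -1/\alpha_0$.

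For the critical case $\lambda = \lambda_0(\omega)$, I would take a decreasing sequence $\lambda_n \searrow \lambda_0(\omega)$, let $u_n$ be the solution of \eqref{eq cellPDE}$_{\lambda_n}$ produced in the first step normalized so $u_n(0) = 0$, and invoke Proposition \ref{prop regularity solutions} to get a Lipschitz constant uniform in $n$ (since the bound depends only on $\alpha_0,\alpha_1,\gamma,\kappa$ and the $\lambda_n$ stay in a bounded set). Equi-Lipschitz plus normalization at $0$ gives a subsequence converging in $\CC(\R)$ to some $u$, and stability delivers a viscosity solution at the critical level, with regularity again coming from Proposition \ref{prop regularity solutions}. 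The main obstacle I anticipate is making sure that the constants in the H\"older estimate \eqref{eq2 Holder bounds} and the Lipschitz/$C^2$ bounds of Proposition \ref{prop regularity solutions} remain uniform as $\lambda \downarrow \lambda_0(\omega)$; if the explicit forms in \eqref{eq Holder bound} and \eqref{eq Lipschitz bound} depend monotonically on $\lambda$, this is automatic, but one should check that the passage to the limit in the stability step does not lose the classical $C^2$ regularity — which it does not, since Proposition \ref{prop regularity solutions} is applied again to the limit $u$.
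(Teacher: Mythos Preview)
Your proposal is correct and follows essentially the same approach as the paper's proof: Busemann-type functions from Lemma \ref{lemma Busemann function} with the singular point sent to infinity, Ascoli--Arzel\`a and viscosity stability for $\lambda>\lambda_0(\omega)$, the observation $\inf_\R |u''|=0$ for the bound $\lambda_0(\omega)\geqslant\min H$, and a second compactness/stability pass for the critical level. The only cosmetic point is that in your displayed inequality for the lower bound you should bound $H(x_k,u'(x_k),\omega)\geqslant \min H$ directly (rather than by $-1/\alpha_0$) before passing to the limit, so that the conclusion $\lambda\geqslant\min H$ actually follows from the line as written.
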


The main output of the next result is that the function $\lambda_0(\cdot)$ is almost surely equal to a constant, that will be denoted by $\lambda_0$ in the sequel.

\begin{lemma}\label{lemma lambda zero}
The random variable $\lambda_0:\Omega\to \R$ is measurable and stationary. In particular, it is almost surely constant. 
\end{lemma}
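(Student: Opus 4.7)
My overall plan is to establish stationarity and measurability of $\lambda_0$ as two separate properties, and then invoke the ergodicity of $(\tau_x)_{x\in\R}$ to conclude that $\lambda_0$ is almost surely constant. The resulting almost-sure bounds follow at once from \eqref{eq upper bound lambda zero} and Theorem~\ref{teo existence solutions}.

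For \emph{stationarity}, I would exploit the translation invariance of \eqref{eq cellPDE} under $(\tau_x)_{x\in\R}$. Fix $\omega\in\Omega$ and $y\in\R$, and let $u$ be a continuous viscosity supersolution of \eqref{eq cellPDE} for $\omega$ at level $\lambda$. Setting $v(x):=u(x+y)$ and using the identities $a(x,\tau_y\omega)=a(x+y,\omega)$ and $H(x,p,\tau_y\omega)=H(x+y,p,\omega)$ that follow from stationarity, a standard verification (translating the test functions along with $u$) shows that $v$ is a continuous viscosity supersolution of \eqref{eq cellPDE} for $\tau_y\omega$ at the same level $\lambda$. Since the set of admissible $\lambda$ is upward-closed in $\R$ (enlarging the right-hand side preserves the supersolution inequality), this yields $\lambda_0(\tau_y\omega)=\lambda_0(\omega)$ for every $\omega\in\Omega$ and $y\in\R$.

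The delicate step is \emph{measurability}, and this is where the hypothesis that $\Omega$ is Polish and $\P$ is complete enters, as the author anticipated in the footnote on this point. For each $\lambda\in\Q$, I would consider
\[
\mathcal{E}_\lambda:=\{(\omega,u)\in\Omega\times\CC(\R)\,:\,u\ \text{is a viscosity supersolution of }\eqref{eq cellPDE}\text{ for }\omega\}.
\]
The viscosity supersolution property admits a countable characterization: one need only test with the members of a fixed countable dense family of $C^2$ functions tangent from below at rational points. Combined with the joint measurability of $(x,\omega)\mapsto a(x,\omega)$ and $(x,p,\omega)\mapsto H(x,p,\omega)$ and their continuity in the remaining arguments, this shows that $\mathcal{E}_\lambda$ is a Borel subset of the product Polish space $\Omega\times\CC(\R)$. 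Its projection onto the first factor,
\[
A_\lambda:=\{\omega\in\Omega\,:\,\eqref{eq cellPDE}\ \text{admits a continuous viscosity supersolution}\},
\]
is then analytic, hence universally measurable, and therefore belongs to $\F$ by completeness of $\P$. By the upward-closedness noted above,
\[
\lambda_0(\omega)=\inf\bigl\{\lambda\in\Q\,:\,\omega\in A_\lambda\bigr\}\qquad\text{for every }\omega\in\Omega,
\]
exhibiting $\lambda_0$ as a countable infimum of measurable functions and hence as measurable.

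Combining measurability with translation invariance, the ergodicity of $(\tau_x)_{x\in\R}$ forces $\lambda_0$ to be almost surely equal to a constant, which lies in $[-1/\alpha_0,\alpha_1]$ by Theorem~\ref{teo existence solutions} and \eqref{eq upper bound lambda zero}. The main obstacle is unquestionably the measurability step: stationarity is essentially an algebraic consequence of translation invariance, and the reduction from ``stationary and measurable'' to ``a.s.\ constant'' is automatic via ergodicity, but recasting ``existence of a supersolution'' as a measurable event in $\omega$ requires projecting a Borel set from a product Polish space onto $\Omega$ and absorbing the resulting analytic-set defect into $\F$ via the completeness of $\P$.
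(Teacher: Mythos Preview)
Your stationarity argument matches the paper's exactly, as does the final appeal to ergodicity. The measurability argument, however, follows a genuinely different route. The paper proceeds via Lusin's theorem: since $\P$ is Radon on the Polish space $\Omega$, for every $\eps>0$ there is a closed set $F\subseteq\Omega$ with $\P(\Omega\setminus F)<\eps$ on which $\omega\mapsto a(\cdot,\omega)$ and $\omega\mapsto H(\cdot,\cdot,\omega)$ are continuous; on $F$, a compactness argument based on the existence and equi-Lipschitz bounds for \emph{solutions} (Theorem~\ref{teo existence solutions} and Proposition~\ref{prop regularity solutions}), together with viscosity stability, shows that $\lambda_0|_F$ is lower semicontinuous, hence Borel on $F$, and completeness of $\P$ then upgrades this to measurability on all of $\Omega$. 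Your projection-theorem approach is more abstract and self-contained: it never invokes the existence of actual solutions at level $\lambda_0(\omega)$, only the defining supersolutions, and it packages the descriptive-set-theoretic cost of the quantifier ``$\exists u$'' directly into the analytic/universally-measurable machinery. The paper's argument, in return, yields a bit of extra structure (lower semicontinuity of $\lambda_0$ along sequences where the coefficients converge).

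One technical caveat: your justification that $\mathcal{E}_\lambda$ is Borel because one can ``test with a countable dense family of $C^2$ functions tangent from below at rational points'' is imprecise as written, since the touching point of a subtangent is dictated by $u$ and is generically irrational, so this is not a straightforward countable reduction. A cleaner way to reach the same conclusion is to observe that the set of triples $(u,\tilde a,\tilde H)\in\CC(\R)\times\CC(\R)\times\CC(\R\times\R)$ for which $u$ is a viscosity supersolution of $\tilde a\,u''+\tilde H(x,u')=\lambda$ is \emph{closed} (by stability of viscosity supersolutions under local uniform convergence of both the function and the coefficients), and to pull this closed set back along the Borel map $(\omega,u)\mapsto\bigl(u,a(\cdot,\omega),H(\cdot,\cdot,\omega)\bigr)$; the latter is Borel because $\omega\mapsto a(\cdot,\omega)$ and $\omega\mapsto H(\cdot,\cdot,\omega)$ are Borel as maps into $\CC(\R)$ and $\CC(\R\times\R)$. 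With this fix, your argument is complete.
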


\begin{proof}
If $u\in\CC(\R)$ is a viscosity supersolution of \eqref{eq cellPDE} for some $\omega\in\Omega$ and $\lambda\geqslant \lambda_0(\omega)$, then the functions $u(\cdot+z)$ is a viscosity supersolution of \eqref{eq cellPDE} with $\tau_z\omega$ in place of $\omega$ by the stationarity of $H$ and $a$. By its very definition, we infer that $\lambda_0$ is a stationary function. 

Let us show that  $\lambda_0:\Omega\to \R$ is measurable. Since the probability measure $\P$ is complete on $(\Omega,\F)$, it is enough to show that, for every fixed $\eps>0$, there exists a set $F\in \F$ with $\P(\Omega\setminus F)<\eps$ such that the restriction of 
$\lambda_0$ to $F$ is measurable.  To this aim, we notice that the measure $\P$ is inner regular on $(\Omega,\F)$, see \cite[Theorem 1.3]{Bill99}, hence it is a Radon measure. By applying Lusin's Theorem \cite{LusinThm} to the random variables $a:\Omega\to\CC(\R)$ and $H:\Omega\to\CC(\R\times\R)$, we infer that there exists a closed set $F\subseteq \Omega$ with $\P(\Omega\setminus F)<\eps$ such that $a_{| F}:F\to\CC(\R)$ and 
$H_{| F}:F\to\CC(\R\times\R)$ are continuous. 
We claim that $F\ni\omega\mapsto \lambda_0(\omega)\in \R$ is lower semicontinuous. 
Indeed, let $(\omega_n)_{n\in\N}$ be a sequence converging to some $\omega_0$ in $F$. For each $n\in\N$, let $u_n$ be a solution of \eqref{eq cellPDE} with $\omega_n$ and $\lambda(\omega_n)$ in place of $\omega$ and $\lambda$. Let us furthermore assume that $u_n(0)=0$ for all $n\in\N$. From \eqref{eq upper bound lambda zero} and Proposition \ref{prop regularity solutions} we derive that the functions $u_n$ are equi-Lipschitz and locally equi-bounded in $\R$. Let us extract a subsequence such that $\liminf_n \lambda (\omega_n)=\lim_k \lambda(\omega_{n_k})=:\tilde\lambda$ and 
$\big(u_{n_k}\big)_k$ converges to a function $u$ in $C(\R)$. Since $a(\cdot,\omega_n)\to a(\cdot,\omega_0)$ in 
$\CC(\R)$ and $H(\cdot,\cdot,\omega_n)\to H(\cdot,\cdot,\omega_0)$ in $\CC(\R\times\R)$, we derive by stability that $u$ solves \eqref{eq:cellPDE} with $\omega:=\omega_0$ and $\lambda:=\tilde\lambda$ in the viscosity sense. By definition of $\lambda_0(\omega_0)$, we conclude that $\tilde\lambda\geqslant \lambda(\omega_0)$, i.e., $\liminf_n \lambda(\omega_n)\geqslant \lambda_0(\omega_0)$ as it was claimed. 
\end{proof}

We now take advantage of the fact that the diffusion coefficient $a$ is strictly positive to remark that the derivatives of viscosity solutions to \eqref{eq cellPDE} are in one-to-one correspondence with classical solutions of the  ODE
%
%
\begin{equation}\label{eq2 ODE lambda}
a(x,\omega)f'+H(x,f,\omega)=\lambda\qquad\hbox{in $\R$}
\tag{ODE$_\lambda$}
\end{equation}
The precise statement is the following.
\begin{prop}\label{prop PDE to ODE}
Let $\omega\in\Omega$ and $\lambda\geqslant\lambda_0(\omega)$ be fixed. Then $f$ is a $C^1$ solution of  \eqref{eq2 ODE lambda} if and only if there exists a viscosity solution $u$ of \eqref{eq cellPDE} with $u'=f$. In particular, there exists a constant  $K=K(\alpha_0,\alpha_1,\gamma,\kappa,\lambda)>0$, only depending on 
$\alpha_0,\alpha_1,\gamma,\kappa,\lambda$ through \eqref{eq Lipschitz bound}, such that any $C^1$ solution $f$ of \eqref{eq2 ODE lambda} satisfies \ $\|f\|_\infty\leqslant K$. 
\end{prop}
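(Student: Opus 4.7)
The plan is to use the equivalence between \eqref{eq cellPDE} and \eqref{eq2 ODE lambda} afforded by the nondegeneracy condition $a(\cdot,\omega)>0$ together with the interior regularity result (Proposition \ref{prop regularity solutions}) that already appeared in the proof of Theorem \ref{teo existence solutions}. Once the first order equation \eqref{eq2 ODE lambda} is set up, passing from a viscosity solution of \eqref{eq cellPDE} to a $C^1$ solution of \eqref{eq2 ODE lambda} (and back) is essentially bookkeeping, and the uniform bound on $\|f\|_\infty$ is inherited from the Lipschitz bound on any viscosity solution of \eqref{eq cellPDE}.

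For the direction from PDE to ODE, I would start with a viscosity solution $u$ of \eqref{eq cellPDE} for $\lambda\geqslant\lambda_0(\omega)$ such that $u'=f$. By Proposition \ref{prop regularity solutions} (the same regularity tool used in Theorem \ref{teo existence solutions}), $u$ is of class $C^2$ on $\R$ and $K$-Lipschitz for the constant $K=K(\alpha_0,\alpha_1,\gamma,\kappa,\lambda)$ of \eqref{eq Lipschitz bound}. Hence $f=u'$ is in $C^1(\R)$ with $f'=u''$, and the equation holds classically at every point. Substituting $u'=f$, $u''=f'$ in \eqref{eq cellPDE} yields $a(x,\omega)f'(x)+H(x,f(x),\omega)=\lambda$ on $\R$, so $f$ is a classical $C^1$ solution of \eqref{eq2 ODE lambda}. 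Moreover, the pointwise identity $|f(x)|=|u'(x)|\leqslant K$ gives the uniform bound $\|f\|_\infty\leqslant K$.

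For the converse direction, suppose $f$ is a $C^1$ solution of \eqref{eq2 ODE lambda}. Set
\[
u(x):=\int_0^x f(s)\,ds\qquad\hbox{for all $x\in\R$.}
\]
Then $u\in C^2(\R)$ with $u'=f$ and $u''=f'$, and substituting into the left-hand side of \eqref{eq cellPDE} gives $a(x,\omega)u''(x)+H(x,u'(x),\omega)=a(x,\omega)f'(x)+H(x,f(x),\omega)=\lambda$ pointwise on $\R$. Hence $u$ is a classical, and therefore a viscosity, solution of \eqref{eq cellPDE} with $u'=f$, which proves the converse. The uniform bound for this direction now follows by applying the first direction to the $u$ just constructed: since $u$ is a viscosity solution of \eqref{eq cellPDE}, Proposition \ref{prop regularity solutions} yields $\|f\|_\infty=\|u'\|_\infty\leqslant K$ with the same constant.

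There is no real obstacle here; the only point to watch is the clean application of Proposition \ref{prop regularity solutions}, which is what makes both the $C^2$-regularity of $u$ (so that differentiation is legitimate in the PDE $\Rightarrow$ ODE direction) and the uniform Lipschitz bound on $u$ (hence on $f=u'$) available without any a priori assumption on the boundedness of $f$.
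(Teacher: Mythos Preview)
Your proposal is correct and follows essentially the same approach as the paper: integrate $f$ to produce a classical (hence viscosity) solution of \eqref{eq cellPDE}, and conversely invoke Proposition \ref{prop regularity solutions} to upgrade a viscosity solution $u$ to $C^2$, so that $f=u'$ solves \eqref{eq2 ODE lambda} and inherits the Lipschitz bound $\|f\|_\infty\leqslant K$. The paper's proof is simply a more compressed version of what you wrote.
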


\begin{proof}
If $f$ is a $C^1$ solution of \eqref{eq2 ODE lambda}, then $u(x):=\int_0^x f(z)\, dz$ is a (classical) solution of \eqref{eq cellPDE}. Conversely, if $u$ is a viscosity solution of \eqref{eq cellPDE}, then $u$ is of class $C^2$ by Proposition \ref{prop regularity solutions} and $f:=u'$ is a $C^1$ solution of \eqref{eq2 ODE lambda}. The remainder of the statement is a direct consequence of Proposition \ref{prop regularity solutions}.
\end{proof}

For every $\lambda\geqslant \lambda_0$ and $\omega\in\Omega$, let us set
\[
\Sol_{\lambda}(\omega):=\left\{f\in \CC^1(\R)\,:\,f\ \hbox{solves \eqref{eq2 ODE lambda}}\,\right\},
\]
where we agree that the above set is empty when $\lambda<\lambda_0(\omega)$. 
\begin{prop}\label{prop minimal solutions}
For every $\lambda\geqslant \lambda_0$ and almost every $\omega\in\Omega$, the set $\Sol_{\lambda}(\omega)$ is nonempty and compact in $\CC(\R)$.
\end{prop}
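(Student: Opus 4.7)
\textbf{Nonemptiness.} I would fix $\omega$ in the full-measure set $\Omega_0:=\{\lambda_0(\omega)=\lambda_0\}\cap\{a(\cdot,\omega)>0\ \text{on}\ \R\}$. Since $\lambda\ge\lambda_0=\lambda_0(\omega)$, Theorem \ref{teo existence solutions} provides a $C^2$ viscosity solution $u$ of \eqref{eq cellPDE}. By Proposition \ref{prop PDE to ODE}, $f:=u'$ belongs to $\Sol_\lambda(\omega)$, so the set is nonempty.

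\textbf{Compactness in $\CC(\R)$.} I would verify relative compactness (equiboundedness and local equicontinuity) and closedness separately. For relative compactness, Proposition \ref{prop PDE to ODE} yields a uniform bound $\|f\|_\infty\le K=K(\alpha_0,\alpha_1,\gamma,\kappa,\lambda)$ for every $f\in\Sol_\lambda(\omega)$. Using that $a(\cdot,\omega)>0$ is continuous on $\R$, every compact interval $I\subset\R$ admits a constant $m_I=\min_I a(\cdot,\omega)>0$; since $H$ is continuous and $|f|\le K$, the quantity $\sup_I|\lambda-H(x,f(x),\omega)|$ is bounded by a constant $M_I$ depending only on $I,\,K,\,\lambda$ and $\omega$. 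Solving \eqref{eq2 ODE lambda} for $f'$ then gives
\begin{equation*}
|f'(x)|=\frac{|\lambda-H(x,f(x),\omega)|}{a(x,\omega)}\le \frac{M_I}{m_I}\qquad\text{for all }x\in I,
\end{equation*}
so the family $\Sol_\lambda(\omega)$ is locally equi-Lipschitz and, by Ascoli-Arzel\`a, relatively compact in $\CC(\R)$.

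\textbf{Closedness.} Given $f_n\to f$ in $\CC(\R)$ with $f_n\in\Sol_\lambda(\omega)$, I would pass to the limit directly in the ODE. Since $a(\cdot,\omega)>0$ is continuous and $H(\cdot,\cdot,\omega)$ is continuous, the identity
\begin{equation*}
f_n'(x)=\frac{\lambda-H(x,f_n(x),\omega)}{a(x,\omega)}
\end{equation*}
shows that $f_n'$ converges locally uniformly to $g(x):=[\lambda-H(x,f(x),\omega)]/a(x,\omega)$. Since $f_n\to f$ and $f_n'\to g$ locally uniformly, $f\in C^1(\R)$ with $f'=g$, i.e.\ $f$ solves \eqref{eq2 ODE lambda} classically; thus $f\in\Sol_\lambda(\omega)$. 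Combined with the relative compactness above, this proves $\Sol_\lambda(\omega)$ is compact in $\CC(\R)$.

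\textbf{Comments on difficulty.} None of the steps is genuinely hard: the nondegeneracy hypothesis (A1) is doing all the work by allowing the PDE to be rewritten as a first-order ODE in $f$. The only mildly delicate point is making sure the ``$C^1$-with-ODE'' conclusion is preserved under $\CC(\R)$-convergence, but this is immediate once one rearranges \eqref{eq2 ODE lambda} using $a>0$. The a priori bound from Proposition \ref{prop PDE to ODE} is what allows us to pick a single constant $K$ uniformly in $f$, and thereby pick the local Lipschitz constants $M_I/m_I$ depending only on $I$ and $\omega$.
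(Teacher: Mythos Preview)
Your proof is correct and follows essentially the same approach as the paper: nonemptiness via Theorem~\ref{teo existence solutions} and Proposition~\ref{prop PDE to ODE} on the full-measure set where $\lambda_0(\omega)=\lambda_0$ and $a(\cdot,\omega)>0$, and compactness via the uniform bound from Proposition~\ref{prop PDE to ODE}, local equi-Lipschitzness obtained by rewriting \eqref{eq2 ODE lambda} as $f'=(\lambda-H)/a$, and passing to the limit in that formula to get $C^1$ convergence. The only cosmetic difference is that you separate relative compactness and closedness, whereas the paper runs a single sequential-compactness argument.
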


\begin{proof}
The fact that $\Sol_{\lambda }(\omega)$ is almost surely nonempty is a direct consequence of Theorem \ref{teo existence solutions}, Lemma  
\ref{lemma lambda zero} and Proposition \ref{prop PDE to ODE}. Let us prove compactness. Take a sequence $(f_n)_{n\in\N}$ in 
$\Sol_{\lambda }(\omega)$. This sequence is equi-bounded in view of Proposition \ref{prop PDE to ODE}. The fact that each $f_n$ solves \eqref{eq2 ODE lambda} and $a(\,\cdot\,,\omega) > 0$ on $\R$ implies that the sequence $(f_n)_n$ is locally equi-Lipschitz in $\R$, hence it converges, up to subsequences, to a function $f$ in $\CC(\R)$ by the Arzel\'a-Ascoli Theorem. 
Again by \eqref{eq2 ODE lambda}, the derivatives $(f'_n)_{n\in\N}$ form a Cauchy sequence in $\CC(\R)$,  hence the functions $f_n$ actually converge to $f$ in the local $\CC^1$ topology and $f$ solves \eqref{eq2 ODE lambda}.
\end{proof}

For every $\omega\in\Omega$ and $\lambda\geqslant \lambda_0$, let us set 
\begin{equation}\label{eq minimal solutions}
\ul f_{\lambda }(x,\omega):=\inf_{f\in\Sol_{\lambda }(\omega)} f(x)
\quad\text{and}\quad
\ol f_{\lambda }(x,\omega):=\sup_{f\in\Sol_{\lambda }(\omega)} f(x)
\qquad
\hbox{for all $x\in\R$},
\end{equation}
where we agree to set $\ul f_{\lambda }(\cdot,\omega)\equiv \ol f_{\lambda }(\cdot,\omega)\equiv 0$ when $\Sol_{\lambda }(\omega)=\emptyset$. 
The next lemma shows in particular that $\ul f_{\lambda}\leqslant \ol f_{\lambda}$ almost surely, with equality possibly holding only when $\lambda=\lambda_0$. In fact, the following monotonicity property holds. 

\begin{lemma}\label{lemma monotonicity f lambda}
Let $\omega\in\Omega$ be such that $\lambda_0(\omega)=\lambda_0$. Then 
\begin{equation*}
\ul f_{\mu}(\,\cdot\,,\omega)
<
\ul f_{\lambda}(\,\cdot\,,\omega)
\leqslant
\ol f_{\lambda }(\,\cdot\,,\omega)
<
\ol f_{\mu }(\,\cdot\,,\omega)\quad\hbox{in $\R$}
\qquad
\hbox{for all $\mu>\lambda\geqslant \lambda_0$.} 
\end{equation*}
\end{lemma}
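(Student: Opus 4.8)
The plan is to prove the chain of inequalities by establishing, separately, the strict monotonicity of the minimal solution $\ul f_\lambda$ (increasing in $\lambda$) and of the maximal solution $\ol f_\lambda$ (increasing in $\lambda$), together with the trivial inequality $\ul f_\lambda \le \ol f_\lambda$ which holds by definition since $\Sol_\lambda(\omega)\ne\emptyset$ for $\omega$ with $\lambda_0(\omega)=\lambda_0$ by Theorem~\ref{teo existence solutions} and Proposition~\ref{prop PDE to ODE}. By symmetry (reversing the role of sup and inf), it suffices to treat the two strict inequalities $\ul f_\mu(\cdot,\omega) < \ul f_\lambda(\cdot,\omega)$ and $\ol f_\lambda(\cdot,\omega) < \ol f_\mu(\cdot,\omega)$; I will focus on the latter and indicate how the former is analogous.

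First I would record the basic ODE comparison mechanism. If $f\in\Sol_\lambda(\omega)$ and $g\in\Sol_\mu(\omega)$ with $\mu>\lambda$, then at any point where $f=g$ we have $a(x,\omega)(f'-g') = \mu-\lambda>0$, so $f'-g'>0$ there since $a(x,\omega)>0$. This is the standard ``solutions of the faster equation cross solutions of the slower equation only upward'' principle, and it immediately gives that $f$ and $g$ can coincide on at most a discrete set and that $\{f<g\}$ and $\{f>g\}$ are each unions of intervals on which the ordering is strict. The key consequence I want is: a solution $g\in\Sol_\mu(\omega)$ cannot stay below a solution $f\in\Sol_\lambda(\omega)$ on an unbounded interval going to $-\infty$ (nor above one going to $+\infty$), because on such an interval the gap $f-g$ would be monotone by the crossing rule, forcing one of $f,g$ to be unbounded, contradicting the uniform bound $\|f\|_\infty, \|g\|_\infty \le K$ from Proposition~\ref{prop PDE to ODE}. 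Hence every $g\in\Sol_\mu(\omega)$ satisfies $g(x) > \ol f_\lambda(x,\omega)$ somewhere; more carefully, for each fixed $x_0$, comparing $g$ with a maximizing sequence $f_n\in\Sol_\lambda(\omega)$ for $\ol f_\lambda(x_0,\omega)$, I would argue that if $g(x_0)\le \ol f_\lambda(x_0,\omega)$ then by the crossing rule $g$ lies below each such $f_n$ on a one-sided neighbourhood extending to $\pm\infty$, which via the uniform bound and a limiting argument leads to a contradiction — so in fact $g(x_0) > f(x_0)$ for every $f\in\Sol_\lambda(\omega)$, whence taking the sup over $f$ and then... wait, this gives $g(x_0)\ge \ol f_\lambda(x_0,\omega)$, and strictness needs a little more care, which is exactly the delicate point (see below). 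Taking then the sup over $g\in\Sol_\mu(\omega)$ yields $\ol f_\mu(x_0,\omega) \ge \ol f_\lambda(x_0,\omega)$, and the reverse/strict direction is obtained by taking $g=\ol f_\mu(\cdot,\omega)$ itself, which belongs to $\Sol_\mu(\omega)$ (this membership is the content of Lemma~\ref{lemma measurable minimal sol}, which I may invoke).

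The main obstacle I anticipate is upgrading the non-strict inequality to a strict one, i.e., ruling out that $\ol f_\lambda(x_0,\omega) = \ol f_\mu(x_0,\omega)$ at some point. The natural approach: suppose equality holds at $x_0$. Using that $\ol f_\mu(\cdot,\omega)\in\Sol_\mu(\omega)$ and that $\ol f_\lambda(\cdot,\omega)$ is (by Lemma~\ref{lemma measurable minimal sol} again) in $\Sol_\lambda(\omega)$, apply the crossing rule at $x_0$: since $\mu>\lambda$, $\ol f_\mu'(x_0) > \ol f_\lambda'(x_0)$, so $\ol f_\mu(\cdot,\omega)$ strictly exceeds $\ol f_\lambda(\cdot,\omega)$ just to the right of $x_0$ and is strictly below just to the left. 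The left-side behaviour is the problem: $\ol f_\mu(\cdot,\omega)$ dipping below $\ol f_\lambda(\cdot,\omega)$ on $(x_0-\delta,x_0)$ is not in itself contradictory — I need to show it cannot happen for a maximal solution. Here I would use that $\ol f_\lambda$ is the pointwise sup of all $\Sol_\lambda(\omega)$: if $\ol f_\mu < \ol f_\lambda$ somewhere to the left, then since a solution of $\mu$ cannot remain trapped below a solution of $\lambda$ all the way to $-\infty$ (boundedness argument above), $\ol f_\mu$ must cross back above $\ol f_\lambda$ at some $x_1<x_0$; but then between $x_1$ and $x_0$ the function $\max\{\ol f_\mu, \ol f_\lambda\}$... hmm, that max is not a solution of anything useful. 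The cleaner route: between two consecutive crossing points of $\ol f_\mu$ and $\ol f_\lambda$ the crossing rule forces a definite sign pattern, and chasing these signs together with $\ol f_\mu(x_0)=\ol f_\lambda(x_0)$ and $\ol f_\mu'(x_0)>\ol f_\lambda'(x_0)$ pins down that immediately left of $x_0$ we'd need $\ol f_\mu<\ol f_\lambda$, then a crossing at some $x_1<x_0$ with $\ol f_\mu(x_1)=\ol f_\lambda(x_1)$ and (crossing rule) $\ol f_\mu'(x_1)>\ol f_\lambda'(x_1)$, i.e. $\ol f_\mu$ crossing $\ol f_\lambda$ from below to above as $x$ increases through $x_1$ — but that is the only way they can cross, so there is no contradiction yet and the gap $\ol f_\lambda - \ol f_\mu$ on $(x_1,x_0)$ is a positive bump. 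I would then translate: consider $\ol f_\lambda(\cdot + h, \omega)$ for small $h$, which is also in $\Sol_\lambda(\tau_h\omega)$ — no. Ultimately I expect the strict inequality to follow from a dynamical-systems-type argument on the ODE: the solutions of \eqref{eq2 ODE lambda} for distinct $\lambda$ foliate the plane in a way that two such solutions are either disjoint or cross transversally, and a careful bookkeeping of which solution is ``maximal'' combined with the uniform $L^\infty$ bound forecloses equality at any single point. I would present this as the technical heart of the lemma and carry out the sign-chase in full in the actual proof.
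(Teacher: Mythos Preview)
Your approach has a genuine gap, and in fact the intermediate claim you are aiming at is false. You try to show that \emph{every} $g\in\Sol_\mu(\omega)$ satisfies $g(x_0)>\ol f_\lambda(x_0,\omega)$ for every $x_0$, via the argument ``$g$ cannot stay below a solution of $\Sol_\lambda(\omega)$ on a half-line because the gap would be monotone and force unboundedness''. But this is simply not true: by the very lemma you are proving, $\ul f_\mu(\cdot,\omega)\in\Sol_\mu(\omega)$ lies \emph{strictly below} $\ul f_\lambda(\cdot,\omega)\le\ol f_\lambda(\cdot,\omega)$ on all of $\R$, and both are bounded. The monotonicity of the gap is also unjustified: from $a(f-g)'=(\lambda-\mu)+\bigl(H(x,g,\omega)-H(x,f,\omega)\bigr)$ you get no definite sign on the region $\{g<f\}$ since $H$ is not assumed monotone in $p$. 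The crossing rule only constrains behaviour at touching points; it does not make the gap monotone away from them. So the ``sign-chase'' you defer to the end cannot be completed along these lines.

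The paper takes a completely different and much shorter route: instead of comparing an arbitrary $g\in\Sol_\mu(\omega)$ to $\ol f_\lambda(\cdot,\omega)$, it \emph{constructs} one particular element of $\Sol_\mu(\omega)$ lying strictly above $\ol f_\lambda(\cdot,\omega)$. Using coercivity (H1), pick a constant $p_1>\|\ol f_\lambda(\cdot,\omega)\|_\infty$ with $\inf_x H(x,p_1,\omega)>\mu$; then $M\equiv p_1$ is a strict supersolution and $m:=\ol f_\lambda(\cdot,\omega)$ is a strict subsolution of $a f'+H(x,f,\omega)=\mu$ (since it solves the equation with $\lambda<\mu$). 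Lemma~\ref{lem:inbetw} yields $f\in\Sol_\mu(\omega)$ with $\ol f_\lambda(\cdot,\omega)<f<p_1$ on $\R$, whence $\ol f_\mu(\cdot,\omega)\ge f>\ol f_\lambda(\cdot,\omega)$. The inequality $\ul f_\mu<\ul f_\lambda$ is symmetric. This avoids entirely the delicate strictness issue you flagged, because the strict inequality is built into the output of Lemma~\ref{lem:inbetw}.
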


\begin{proof}
Let us fix $\mu>\lambda\geqslant \lambda_0$. Choose 
$R>  \|\ol f_{\lambda}\|_\infty$, where the latter quantity is finite due to Proposition \ref{prop sharp bounds}.  
By the fact that $H\in\Ham$, we can find $p_1>R$ such that \ 
$\inf_{x\in\R} H(x,p_1,\omega)>\mu$. 
We can apply Lemma \ref{lem:inbetw} with $M(\cdot)=p_1$, $m(\cdot):=\ol{f}_\lambda(\cdot,\omega)$ and $G:=H-\mu$, to deduce the existence of a solution  $f\in\CC^1(\R)$ 
of   \eqref{eq2 ODE lambda} with $\mu$ in place of $\lambda$ satisfying $\ol{f}_\lambda(\cdot,\omega)<f<p_1$ on $\R$.  
This implies that $\ol f^+_\lambda(\cdot,\omega)<\ol f^+_\mu(\cdot,\omega)$ in $\R$ by definition of $\ol f^+_\mu$. The other inequality can be proved in a similar way. 
\end{proof}

The next lemma yields that the functions $\ul f_{\lambda }, \ol f_{\lambda }$ are stationary solutions to \eqref{eq2 ODE lambda}. 
To prove joint measurability, we use again the fact that $\Omega$ is a Polish space, $\F$ is its Borel $\sigma$-algebra  and $\P$ is a complete probability measure.\smallskip 

\begin{lemma}\label{lemma measurable minimal sol}
Let $\lambda\geqslant \lambda_0$. Then the functions $\ul f_{\lambda }, \ol f_{\lambda }: \R\times\Omega\to\R$ are jointly measurable and stationary. Moreover, $\ul f_{\lambda }(\,\cdot\,,\omega),\, \ol f_{\lambda }(\,\cdot\,,\omega)\in \Sol_\lambda(\omega)$ for every $\omega\in\Omega$.
\end{lemma}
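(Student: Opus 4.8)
The plan is to prove the three assertions in order: (i) $\ul f_\lambda(\,\cdot\,,\omega)$ and $\ol f_\lambda(\,\cdot\,,\omega)$ solve \eqref{eq2 ODE lambda} for every $\omega$; (ii) they are stationary; (iii) they are jointly measurable. For (i), fix $\omega$ with $\lambda_0(\omega)=\lambda_0$ (a set of full probability by Lemma \ref{lemma lambda zero}). By Proposition \ref{prop minimal solutions} the set $\Sol_\lambda(\omega)$ is nonempty and compact in $\CC(\R)$, and by the convergence argument in that proof the $C^1$-topology and $\CC(\R)$-topology coincide on $\Sol_\lambda(\omega)$. I would first show $\ol f_\lambda(\,\cdot\,,\omega)$ is itself a solution: for each $x_0\in\R$ pick $f\in\Sol_\lambda(\omega)$ with $f(x_0)=\ol f_\lambda(x_0,\omega)$ (attained by compactness), note $f\le\ol f_\lambda(\,\cdot\,,\omega)$ everywhere with equality at $x_0$, so the corresponding primitives $u_f(x)=\int_0^x f$ and $U(x)=\int_0^x\ol f_\lambda(z,\omega)\,dz$ satisfy $u_f\le U$ with a tangency in slope at $x_0$; since $U$ is a supremum of the viscosity solutions $u_f$ of \eqref{eq cellPDE}, it is a viscosity subsolution, and being locally Lipschitz it is also a supersolution at points of differentiability by the sign-of-$a$ remark in Section \ref{sec:outline} — alternatively, argue directly that $\ol f_\lambda(\,\cdot\,,\omega)$, as a locally Lipschitz pointwise supremum of classical solutions of the first-order ODE \eqref{eq2 ODE lambda}, must itself solve it a.e., hence (by the ODE, since $a>0$) is $C^1$ and solves it classically. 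The symmetric argument handles $\ul f_\lambda$. The cleanest route is probably: $U$ is a viscosity solution of \eqref{eq cellPDE} (sup of subsolutions is a subsolution; at each $x_0$ the tangency with $u_f$ which is a supersolution gives the supersolution inequality), then invoke Proposition \ref{prop PDE to ODE} and Proposition \ref{prop regularity solutions} to get $U\in C^2$ and $\ol f_\lambda(\,\cdot\,,\omega)=U'\in\Sol_\lambda(\omega)$.

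For stationarity, this is the routine step: if $f\in\Sol_\lambda(\omega)$ then $f(\,\cdot\,+y)\in\Sol_\lambda(\tau_y\omega)$ by stationarity of $a$ and $H$, and this correspondence is a bijection between $\Sol_\lambda(\omega)$ and $\Sol_\lambda(\tau_y\omega)$; taking pointwise infima/suprema gives $\ul f_\lambda(x+y,\omega)=\ul f_\lambda(x,\tau_y\omega)$ and likewise for $\ol f_\lambda$, for all $x,y\in\R$ and $\omega\in\Omega$.

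For joint measurability I would use the Polish/complete-measure hypothesis exactly as in the proofs of Lemma \ref{lemma lambda zero} and (presumably) the analogous lemma that is about to follow. It suffices, for each $\eps>0$, to produce a closed set $F\subseteq\Omega$ with $\P(\Omega\setminus F)<\eps$ on which $(x,\omega)\mapsto\ol f_\lambda(x,\omega)$ is jointly measurable. By inner regularity of $\P$ (a Radon measure, cf. \cite[Theorem 1.3]{Bill99}) and Lusin's theorem applied to $\omega\mapsto a(\,\cdot\,,\omega)\in\CC(\R)$ and $\omega\mapsto H(\,\cdot\,,\,\cdot\,,\omega)\in\CC(\R\times\R)$, choose $F$ closed with these maps continuous on $F$. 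I then claim $\omega\mapsto\Sol_\lambda(\omega)$ is an upper semicontinuous compact-valued multifunction on $F$ into $\CC(\R)$: if $\omega_n\to\omega_0$ in $F$ and $f_n\in\Sol_\lambda(\omega_n)$, then by the equi-Lipschitz/equi-bounded estimates (Proposition \ref{prop PDE to ODE}, Proposition \ref{prop regularity solutions}) a subsequence converges in $\CC(\R)$ to some $f$, which solves \eqref{eq2 ODE lambda} with $\omega_0$ by stability since $a(\,\cdot\,,\omega_n)\to a(\,\cdot\,,\omega_0)$ and $H(\,\cdot\,,\,\cdot\,,\omega_n)\to H(\,\cdot\,,\,\cdot\,,\omega_0)$ on $F$; hence $f\in\Sol_\lambda(\omega_0)$. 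Upper semicontinuity into a Polish space gives measurability of the multifunction, and then $\ol f_\lambda(x,\omega)=\sup\{f(x):f\in\Sol_\lambda(\omega)\}$ is measurable in $\omega$ for each fixed $x$ (sup of a measurable family), jointly measurable in $(x,\omega)$ since each $f$ is continuous in $x$ — more carefully, the map $(x,\omega)\mapsto\ol f_\lambda(x,\omega)$ is upper semicontinuous in $\omega$ for fixed $x$ and continuous in $x$ for fixed $\omega$ (it is locally Lipschitz as a sup of equi-Lipschitz functions), a Carathéodory-type situation which yields $\mathcal B\otimes\F$-measurability on $F$. Letting $\eps\to0$ and using completeness of $\P$ gives joint measurability on all of $\R\times\Omega$; the argument for $\ul f_\lambda$ is identical with inf in place of sup.

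The main obstacle I expect is making the measurability of the multifunction $\omega\mapsto\Sol_\lambda(\omega)$ and the passage to the pointwise supremum fully rigorous: one must be careful that at a boundary exceptional null set where $\lambda_0(\omega)\ne\lambda_0$ the set $\Sol_\lambda(\omega)$ might behave differently (but there we have simply declared $\ul f_\lambda\equiv\ol f_\lambda\equiv 0$, which is harmless for measurability since it is a null set and $\P$ is complete), and that the sup over a possibly uncountable compact family is genuinely jointly measurable — this is where upper semicontinuity of the multifunction, rather than a naive countable-dense-subset argument, does the work. Everything else is a straightforward application of the compactness and stability facts already established in this section.
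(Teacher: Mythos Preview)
Your approach to (ii) and (iii) is essentially the paper's: stationarity is proved exactly as you say, and joint measurability goes through Lusin's theorem applied to $\omega\mapsto a(\cdot,\omega)$ and $\omega\mapsto H(\cdot,\cdot,\omega)$ to get a set $F$ of large measure on which the data are continuous. The paper phrases the conclusion slightly more directly than you do --- it shows that $(x,\omega)\mapsto \ul f_\lambda(x,\omega)$ is \emph{jointly} lower semicontinuous on $\R\times F$ (and $\ol f_\lambda$ jointly upper semicontinuous) by the same compactness/stability argument you use for the multifunction --- but your route through upper semicontinuity of $\omega\mapsto\Sol_\lambda(\omega)$ followed by a Carath\'eodory step is equivalent.

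There is, however, a genuine gap in your argument for (i). Your ``cleanest route'' asserts that $U(x)=\int_0^x \ol f_\lambda$ is the pointwise supremum of the primitives $u_f(x)=\int_0^x f$, $f\in\Sol_\lambda(\omega)$, and hence a viscosity subsolution. This is false: if $f\leqslant \ol f_\lambda$ on $\R$, then $u_f(x)\leqslant U(x)$ only for $x\geqslant 0$, and the reverse inequality holds for $x\leqslant 0$; so $U$ is \emph{not} $\sup_f u_f$. For the same reason, the ``tangency with $u_f$ at $x_0$'' you describe is a tangency in slope, not a one-sided contact of functions, and gives neither the sub- nor the supersolution inequality at $x_0$. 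Your alternative (``the pointwise sup of classical ODE solutions must solve the ODE a.e.'') is also not true for a general first-order ODE, and you do not supply the missing ingredient.

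The paper bypasses all of this by invoking Lemma~\ref{appendix A lemma lattice}: since $\Sol_\lambda(\omega)$ is a nonempty compact subset of $\CC(\R)$ (Proposition~\ref{prop minimal solutions}), the pointwise infimum and supremum over $\Sol_\lambda(\omega)$ belong to $\Sol_\lambda(\omega)$. The underlying reason this works --- and the idea you are missing --- is that, because $a(\cdot,\omega)>0$ and $H$ is locally Lipschitz in $p$, Cauchy--Lipschitz uniqueness forces any two solutions of \eqref{eq2 ODE lambda} that touch to coincide; hence $\Sol_\lambda(\omega)$ is \emph{totally ordered}, and by compactness the supremum is actually attained by an element of $\Sol_\lambda(\omega)$. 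Once you observe this, (i) is immediate without any viscosity considerations.
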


\begin{proof}
The last assertion is a direct consequence of Proposition \ref{prop minimal solutions} and Lemma \ref{appendix A lemma lattice}. The stationarity property of $\ul f$, $\ol f$ is a consequence of the fact that, for every fixed $z\in\R$ and $\omega\in\Omega$, $f\in \Sol_\lambda(\omega)$ if and only if $f(\,\cdot\,+z)\in \Sol_\lambda(\tau_z\omega)$ since $a(\,\cdot\,+z,\omega)=a(\,\cdot\,,\tau_z\omega)$ and $H(\,\cdot\, + z,\cdot,\omega)=H(\,\cdot\,,\cdot,\tau_z\omega)$. 

Let us prove that $\ul f:\R\times\Omega\to\R$ is measurable with respect to the product $\sigma$-algebra ${\mathcal B}\otimes\F$. This is equivalent to showing that $\Omega\ni \omega\mapsto \ul f(\,\cdot\,,\omega)\in\CC(\R)$ is a random variable from $(\Omega,\F)$ to the Polish space $\CC(\R)$ endowed with its Borel $\sigma$-algebra, see for instance  \cite[Proposition 2.1]{DS09}. 
Since the probability measure $\P$ is complete on $(\Omega,\F)$, it is enough to show that, for every fixed $\eps>0$, there exists a set $F\in \F$ with $\P(\Omega\setminus F)<\eps$ such that the restriction 
$\ul f$ to $F$ is a random variable from $F$ to $\CC(\R)$.  To this aim, we notice that the measure $\P$ is inner regular on $(\Omega,\F)$, see \cite[Theorem 1.3]{Bill99}, hence it is a Radon measure. By applying Lusin's Theorem \cite{LusinThm} to the random variables $a:\Omega\to\CC(\R)$ and $H:\Omega\to\CC(\R\times\R)$, we infer that there exists a compact set $F\subseteq \Omega$ with $\P(\Omega\setminus F)<\eps$ such that $a_{| F},H_{| F}:F\to\CC(\R\times\R)$ are continuous. We claim that $\ul f:\R\times F\to\R$ is lower semicontinuous. 
Indeed, let $(x_n,\omega_n)_{n\in\N}$ be a sequence converging to some $(x_0,\omega_0)$ in $\R\times F$. 
By the continuity of $a$ on $\R\times F$, we have that $\min_{J\times F} a>0$ for every compact interval $J\subset \R$. This implies that 
the functions  $\ul f(\,\cdot\,,\omega_n)$ are locally equi-Lipschitz on $\R$. Since they are also equi-bounded on $\R$ in view of Proposition \ref{prop PDE to ODE}, by the Arzel\`a-Ascoli Theorem, we can extract a subsequence $\big(x_{n_k},\omega_{n_k}\big)_{k\in\N}$ such that $\liminf_n \ul f(x_n,\omega_n)=\lim_k \ul f(x_{n_k},\omega_{n_k})$ and 
$\ul f(\,\cdot\,,\omega_{n_k})$ converge to some $f$ in $\CC(\R)$. Since each function $\ul f(\,\cdot\,,\omega_{n})$ is a solution to \eqref{eq2 ODE lambda} with $\omega:=\omega_n$ and $a(\,\cdot\,,\omega_n)\to a(\,\cdot\,,\omega_0)$, $H(\,\cdot,\cdot,\omega_n)\to H(\,\cdot,\cdot,\omega_0)$ in $\CC(\R\times\R)$, an argument analogous to the one used in the proof of Proposition  \ref{prop minimal solutions} shows that the functions $\ul f(\,\cdot\,,\omega_{n_k})$ actually converge to $f$ in the local $\CC^1$ topology. This readily implies that $f\in \Sol_\lambda(\omega_0)$. By the definition of $\ul f$, we conclude that $\ul f(\,\cdot\,,\omega)\leqslant f$, in particular
\[
\ul f(x_0,\omega_0)
\leqslant
f(x_0)
=
\lim_k \ul f (x_{n_k},\omega_{n_k})
=
\liminf_n \ul f (x_n,\omega_{n}),
\]
proving the asserted lower semicontinuity property of $\underline f$. This implies that $\ul f_{|F}:F\to\CC(\R)$ is measurable (see, e.g., \cite[Proposition 2.1]{DS09}).
Via a similar argument, one can show that $\ol f:\R\times F\to\R$ is upper semicontinuous. 
\end{proof}

We end this section by stating a result which give a more precise description of the bounds enjoyed by stationary solutions of   \eqref{eq2 ODE lambda}. It is a direct consequence of Corollary \ref{cor:lelam}. 
%

\begin{prop}\label{prop sharp bounds}
Let $\lambda\geqslant \lambda_0$ and let $f:\R\times\Omega\to\R$ be a stationary function such that $f(\cdot,\omega)$  is a $C^1$ solution of   \eqref{eq2 ODE lambda}, for almost every $\omega$. Then, for every $\omega$ in a set of probability 1, 
we have 
\begin{equation*}
\sup_{\mu>\lambda}p^-_\mu
\leqslant
f(x,\omega)
\leqslant
\inf_{\mu>\lambda}p^+_\mu
\qquad\hbox{for all $x\in\R$.}
\end{equation*}
where $p_\mu^\pm$ are constants depending on $\mu\in\R$ defined  almost surely as follows:
\begin{align*}
	p_\lambda^-:=\inf_{x\in\R}\inf\{p\in\R:\ H(p,x,\omega)\leqslant \lambda\} \quad\text{and}\quad p^+_\lambda:=\sup_{x\in\R}\sup\{p\in\R:\ H(p,x,\omega)\leqslant \lambda\}.
\end{align*}
\end{prop}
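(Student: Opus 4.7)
The plan is to fix $\mu>\lambda$ and show separately that $f(x,\omega)\leqslant p^+_\mu$ and $f(x,\omega)\geqslant p^-_\mu$ almost surely for every $x\in\R$; the claim then follows by intersecting over a countable dense sequence of $\mu>\lambda$ and passing to the infimum (respectively, supremum), using the monotonicity of $\mu\mapsto p^\pm_\mu$ together with the continuity of $H$ in the $p$ variable.

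First I would justify that $p^\pm_\mu$ are indeed (almost surely) constants. A priori they are measurable functions of $\omega$, but the stationarity of $H$ in $x$ gives
\[
p^+_\mu(\tau_z\omega)=\sup_{x\in\R}\sup\{p\in\R:H(x+z,p,\omega)\leqslant\mu\}=p^+_\mu(\omega)\qquad\text{for all }z\in\R,
\]
and analogously for $p^-_\mu$; ergodicity then yields a.s.\ constancy, while (H1) ensures finiteness.

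The core of the argument is the observation that, for \emph{any} constant $c>p^+_\mu$, the very definition of $p^+_\mu$ gives $H(x,c,\omega)>\mu>\lambda$ for all $x\in\R$ and a.e.\ $\omega$, so that $g\equiv c$ is a stationary classical strict supersolution of \eqref{eq2 ODE lambda}. Corollary \ref{cor:lelam} (which compares stationary sub-/supersolutions of \eqref{eq2 ODE lambda}) then forces $f(\cdot,\omega)\leqslant c$ almost surely. Letting $c\downarrow p^+_\mu$ along a countable sequence produces $f(x,\omega)\leqslant p^+_\mu$ on a single set of full probability. The lower bound is obtained symmetrically by noting that any constant $c<p^-_\mu$ satisfies $H(x,c,\omega)>\mu$ (by definition of $p^-_\mu$ as an infimum of the sublevel set, so that values of $p$ below it lie outside it) and hence is a strict stationary subsolution of \eqref{eq2 ODE lambda}, to which the other half of Corollary \ref{cor:lelam} applies.

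The only mildly delicate point I anticipate is bookkeeping the countably many exceptional sets: one fixes a countable dense sequence $(\mu_n)_n\subset(\lambda,+\infty)$, applies the above argument for each $\mu_n$ on a common set $\hat\Omega_n$ of full measure, and then works on $\bigcap_n\hat\Omega_n$. Continuity of $\mu\mapsto p^\pm_\mu$ (immediate from monotonicity and the continuity of $H(x,\cdot,\omega)$) allows one to replace the countable infimum/supremum by the full infimum/supremum over $\mu>\lambda$, delivering the stated bounds.
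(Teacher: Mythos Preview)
Your proposal misidentifies what Corollary~\ref{cor:lelam} says. It is not a comparison principle between stationary sub- and supersolutions of \eqref{eq2 ODE lambda}; rather, it states directly that any stationary $C^1$ function $f$ with $a f' + G(f,\cdot,\omega) \leqslant \lambda$ satisfies $f(x,\omega)\in[\sup_{\mu>\lambda} p^-_\mu,\inf_{\mu>\lambda} p^+_\mu]$ for all $x\in\R$, almost surely. Since a solution of \eqref{eq2 ODE lambda} trivially satisfies this hypothesis, the proposition is a one-line application of that corollary---which is exactly the paper's proof. Your detour through constants $c$, limits $c\downarrow p^+_\mu$, and countable intersections over $\mu$ is superfluous once Corollary~\ref{cor:lelam} is invoked.

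If instead your intent was to reprove Corollary~\ref{cor:lelam} (equivalently Lemma~\ref{lem:ubounds}) via constant barriers, the argument as written has two problems. First, for a first-order ODE there is no off-the-shelf comparison principle converting ``$g\equiv c$ satisfies $H(\cdot,c,\omega)>\lambda$ and $f$ is a solution'' into ``$f\leqslant c$''; one needs the trichotomy of Lemma~\ref{lem:order} together with additional input (e.g.\ Lemma~\ref{lem:stat}, which supplies local extrema where $f'=0$ and hence $H(\cdot,f,\cdot)=\lambda<\mu$, forcing $f\in(p^-_\mu,p^+_\mu)$ there) to rule out the alternative $f>c$. Second, your lower-bound step is mislabelled: for $c<p^-_\mu$ one again has $H(x,c,\omega)>\mu>\lambda$, the \emph{same} inequality as in the upper-barrier case, so $c$ is not a ``subsolution'' in any sense that would yield $f\geqslant c$ by a symmetric comparison.
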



\section{Bridging stationary solutions}\label{sec:bridging}

This section is devoted to the proof of Theorem \ref{teo lower bound}, where we provide a novel  argument to suitably bridge a pair of distinct stationary functions $f_1<f_2$ which solve the following ODE for the same value of $\lambda$: 
\begin{equation}\label{eq ODE lambda}
a(x,\omega)f'+H(x,f,\omega)=\lambda\qquad\hbox{in $\R$}.
\tag{ODE$_\lambda$}
\end{equation}
It exploits the absence of stationary functions trapped between $f_1$ and $f_2$ which solve  \eqref{eq ODE lambda} for values of the parameter at the right-hand side lower (respectively, greater) than $\lambda$ in order to construct a lift which allows to gently descend from $f_2$ to $f_1$ (resp., to ascend from $f_1$ to $f_2$). 
The absence of such stationary solutions will be used in the proof of Proposition \ref{prop D}, which can be regarded as a crucial step in order to establish our homogenization result stated in Theorem \ref{thm:genhom}. 
\smallskip

Most of the results of this section holds under a very general set of assumptions on $H$, such as conditions (G1)-(G2) in Appendix \ref{app:ODE}. However, for the sake of readability and consistency with the rest of the paper, we shall assume that $H\in\Ham$ for some constants $\alpha_0,\alpha_1>0$ and $\gamma>2$.\smallskip

Let us denote by $\Sol_\lambda$ the family of essentially bounded and jointly measurable stationary functions 
$f:\R\times\Omega\to\R$ such that $f(\cdot,\omega)$ is a (bounded) $C^1$ solution of \eqref{eq ODE lambda} almost surely. 
We know that this set is nonempty whenever $\lambda\geqslant \lambda_0$, in view of Proposition \ref{prop minimal solutions} 
and Lemmas \ref{lemma monotonicity f lambda} and \ref{lemma measurable minimal sol}.\smallskip

We distill in the next statement an arguments that already appeared in \cite{DK22} and which will be needed 
for the proof of Theorem \ref{teo lower bound}.

\begin{prop}\label{prop corrector argument}
Let $f_1,f_2$ be functions belonging to $\Sol_\lambda$ for some $\lambda\in\R$ with 
with $f_1<f_2$ on $\R$ almost surely. Let us set $\theta_i:=\EE[f(0,\omega)]$ for $i\in\{1,2\}$. 
\begin{itemize}
\item[(i)] Let us assume that there exist $\eps>0$ and a set $\Omega_\eps$ of positive probability such that, for every $\omega\in\Omega_\eps$, there exists a $C^1$ and bounded function $g_\eps$ satisfying  
\[
a(x,\omega)g_\eps'+H(x,g_\eps,\omega)\leqslant\lambda+\eps\qquad\hbox{in $\R$}
\]
and such that $g_\eps(\cdot)=f_1(\cdot,\omega)$ in $(-\infty, -L)$ and $g_\eps(\cdot)=f_2(\cdot,\omega)$ in $(L,+\infty)$ for some $L>0$. Then \ $\HV^U(H)(\theta)\leqslant \lambda+\eps$\quad for every $\theta\in (\theta_1,\theta_2)$.\smallskip
\item[(ii)] Let us assume that there exist $\eps>0$ and a set $\Omega_\eps$  of positive probability such that, for every $\omega\in\Omega_\eps$, there exists a $C^1$ and bounded function $g_\eps$ satisfying  
\[
a(x,\omega)g_\eps'+H(x,g_\eps,\omega)\geqslant \lambda-\eps\qquad\hbox{in $\R$}
\]
and such that $g_\eps(\cdot)=f_2(\cdot,\omega)$ in $(-\infty, -L)$ and $g_\eps(\cdot)=f_1(\cdot,\omega)$ in $(L,+\infty)$ for some $L>0$. Then \ $\HV^L(H)(\theta)\geqslant \lambda-\eps$\quad for every $\theta\in (\theta_1,\theta_2)$.
\end{itemize}
\end{prop}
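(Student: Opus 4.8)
I would prove (i) in full, part (ii) being entirely symmetric (exchange supersolutions for subsolutions, $\HV^U$ for $\HV^L$, $\limsup$ for $\liminf$, and ``bounded below'' for ``bounded above''). Since $\HV^U(H)(\theta)$ is a deterministic quantity, almost surely equal to the $\limsup$ appearing in \eqref{eq:infsup} (see \cite[Proposition 3.1]{DK22}), it suffices to exhibit, for a single well-chosen $\omega$, a bound of the form $u_\theta(t,0,\omega)\le(\lambda+\eps)\,t+C$ with $C$ not depending on $t$. So first I would fix $\theta\in(\theta_1,\theta_2)$ and choose $\omega\in\Omega_\eps$ that also lies in the probability-one event on which: $f_1(\cdot,\omega)$ and $f_2(\cdot,\omega)$ are $C^1$ solutions of \eqref{eq ODE lambda} with $f_1<f_2$ on $\R$; the identity $\HV^U(H)(\theta)=\limsup_{t\to+\infty}u_\theta(t,0,\omega)/t$ holds; and, by Birkhoff's ergodic theorem applied to the (essentially bounded, hence integrable) stationary functions $f_i$, one has $\tfrac1x\int_0^x f_i(z,\omega)\,dz\to\theta_i$ as $x\to\pm\infty$ for $i=1,2$. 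Such an $\omega$ exists because $\P(\Omega_\eps)>0$.

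Next I would introduce the antiderivative $w(x):=\int_0^x g_\eps(z)\,dz$, which is $C^2$ with $w'=g_\eps$ and $w(0)=0$. Since $g_\eps$ agrees with $f_1(\cdot,\omega)$ on $(-\infty,-L)$ and with $f_2(\cdot,\omega)$ on $(L,+\infty)$, the Birkhoff asymptotics give $w(x)=c_-+\int_0^x f_1(z,\omega)\,dz$ for $x<-L$ and $w(x)=c_++\int_0^x f_2(z,\omega)\,dz$ for $x>L$ (with constants $c_\pm$), hence $w(x)/x\to\theta_1$ as $x\to-\infty$ and $w(x)/x\to\theta_2$ as $x\to+\infty$. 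Writing $w(x)-\theta x=x\big(w(x)/x-\theta\big)$ and using $\theta_1<\theta<\theta_2$, both limits of $w(x)-\theta x$ at $\pm\infty$ are $+\infty$; by continuity the function $x\mapsto w(x)-\theta x$ is then bounded below, so $w(x)\ge\theta x-M$ on $\R$ for some $M=M(\omega,\eps,\theta)\ge0$.

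Finally I would close the argument by comparison. Put $\tilde w(t,x):=w(x)+(\lambda+\eps)\,t+M$. From the hypothesis on $g_\eps$ and $w'=g_\eps$, $w''=g_\eps'$, one checks $\partial_t\tilde w-a(x,\omega)\,\partial^2_{xx}\tilde w-H(x,\partial_x\tilde w,\omega)=(\lambda+\eps)-\big(a(x,\omega)g_\eps'(x)+H(x,g_\eps(x),\omega)\big)\ge0$ on $(0,+\infty)\times\R$, so $\tilde w$ is a classical (hence viscosity) supersolution of \eqref{eq:generalHJ}, and $\tilde w(0,x)=w(x)+M\ge\theta x=u_\theta(0,x,\omega)$. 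Both $\tilde w$ and $u_\theta(\cdot,\cdot,\omega)$ grow at most linearly in $x$ locally uniformly in $t$ (because $g_\eps$ is bounded and $u_\theta$ is Lipschitz), so the comparison principle for \eqref{eq:generalHJ} (Appendix \ref{app:PDE}) gives $u_\theta(t,x,\omega)\le\tilde w(t,x)$ on $[0,+\infty)\times\R$; taking $x=0$ yields $u_\theta(t,0,\omega)\le(\lambda+\eps)\,t+M$, hence $\HV^U(H)(\theta)\le\lambda+\eps$, and arbitrariness of $\theta\in(\theta_1,\theta_2)$ finishes (i). The one step that is not mere bookkeeping is recognizing that the bridging property of $g_\eps$ --- passing from the $f_1$-branch to the $f_2$-branch --- makes $x\mapsto w(x)-\theta x$ coercive at both ends of $\R$ exactly when $\theta_1<\theta<\theta_2$; this is the mechanism already exploited in \cite{DK22}, and once it is in place the rest is the standard integrate-then-compare routine.
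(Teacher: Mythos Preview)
Your proposal is correct and follows essentially the same approach as the paper: pick $\omega\in\Omega_\eps$ lying in the full-measure set where the $\limsup$ in \eqref{eq:infsup} is deterministic and Birkhoff's theorem applies to $f_1,f_2$, integrate $g_\eps$ to build a classical supersolution $\tilde w(t,x)=(\lambda+\eps)t+\int_0^x g_\eps+M$, use the bridging structure together with $\theta_1<\theta<\theta_2$ to choose $M$ so that $\tilde w(0,\cdot)\ge\theta\,\cdot$, and conclude by comparison. Your write-up is in fact slightly more explicit than the paper's in justifying the coercivity of $x\mapsto w(x)-\theta x$ and in checking the growth hypotheses needed for the comparison principle of Appendix~\ref{app:PDE}.
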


\begin{proof}
The proof is based on an argument already used in \cite[Lemma 5.6]{DK22}. We shall only prove (i), being the proof of (ii) analogous. Let us fix $\theta\in (\theta_1,\theta_2)$ and choose a set $\hat\Omega$ of probability 1 such that, for every $\omega\in\hat\Omega$, both the limits in \eqref{eq:infsup}  and the following ones (by Birkhoff Ergodic Theorem) hold:
\begin{align}
&\lim_{x\to+\infty}\frac{1}{x}\int_{0}^x{f_2}(s,\omega)\,ds=\EE[f_2(0,\omega)] > \theta\label{eq1 Birkhoff}\\
&\lim_{x\to-\infty}\frac{1}{|x|}\int_{x}^{0}{f_1}(s,\omega)\,ds=\EE[f_1(0,\omega)] < \theta.\label{eq2 Birkhoff}
\end{align}
Let us fix  $\omega\in\hat\Omega\cap\Omega_\eps$ and set $w_\eps(x):=\int_0^x g_\eps(z)\, dz$ for all $x\in\R$, where $g_\eps$ is as in statement (i). Let us set
\[
\tilde w_\eps(t,x)=(\lambda+\eps)t+w_\eps(x)+M_\eps\qquad\hbox{for all $(t,x)\in \ccyl$},
\]
where the constant $M_\eps$ will be chosen later to ensure that $\tilde w_\eps(0,x)\geqslant \theta x$ for all $x\in\R$. The function $\tilde w_\eps$ is a supersolution of \eqref{eq:generalHJ}. Indeed, 
	\begin{align*}
		\partial_t\tilde w_\eps
=
		(\lambda+\eps)\ge
a(x,\omega)g'_\eps + H(x, g_\eps,\omega)
=	
a(x,\omega)\partial_{xx}^2\tilde w_\eps + H(x,\partial_x \tilde w_\eps,\omega).
\end{align*}
In view of \eqref{eq1 Birkhoff} and \eqref{eq2 Birkhoff},  we can pick $M_\eps$ large enough so that $\tilde w_\eps(0,x)\geqslant \theta x$ for all $x\in\R$.  By the comparison principle, $\tilde w_\eps(t,x)\geqslant u_\theta(t,x,\omega)$ on $\ccyl$ and, hence,
	\[\HV^U(H)(\theta)=\limsup_{t\to+\infty}\frac{u_\theta(t,0,\omega)}{t}\leqslant\limsup_{t\to+\infty}\frac{\tilde w_\eps(t,0)}{t}=\lambda+\eps.\qedhere
\] 
\end{proof}

Let us now prove the main result of this section.

\begin{theorem}\label{teo lower bound}
Let $f_1,f_2$ be functions belonging to $\Sol_\lambda$ for some $\lambda\in\R$. 
Let us assume that $f_1<f_2$ on $\R$ almost surely and set $\theta_i:=\EE[f_i(0,\omega)]$ for $i\in\{1,2\}$. 
For any $\mu\in\R$, let us denote by $\Solfrak(\mu;f_1,f_2)$ the family of bounded and $C^1$ stationary solutions $f$ of \eqref{eq ODE lambda} with $\mu$ in place of $\lambda$ that satisfy $f_1< f <f_2$ in $\R$, almost surely. 
\begin{itemize}
\item[\em (i)] 
If   $\Solfrak(\mu;f_1,f_2)=\emptyset$ for any $\mu<\lambda$, then 
\[
\HV^L(H)(\theta)\geqslant \lambda\qquad\hbox{for all $\theta\in (\theta_1,\theta_2)$.}
\]
\item[\em (ii)] 
If   $\Solfrak(\mu;f_1,f_2)=\emptyset$ for any $\mu>\lambda$, then 
\[
\HV^U(H)(\theta)\leqslant \lambda\qquad\hbox{for all $\theta\in (\theta_1,\theta_2)$.}
\]

\end{itemize}
\end{theorem}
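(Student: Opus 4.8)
The plan is to derive both statements from Proposition~\ref{prop corrector argument}; since (ii) follows from (i) by exchanging the roles of $f_1$ and $f_2$ and reversing the orientation of $\R$ (and the sign of the perturbation of $\lambda$), I will only describe the argument for (i). By Proposition~\ref{prop corrector argument}(ii) it is enough to show that, for every small $\eps>0$, there is an event $\Omega_\eps\in\F$ with $\P(\Omega_\eps)>0$ such that for each $\omega\in\Omega_\eps$ one can produce a bounded $g_\eps\in\CC^1(\R)$ with $a(x,\omega)g_\eps'+H(x,g_\eps,\omega)\geqslant\lambda-\eps$ on $\R$, coinciding with $f_2(\cdot,\omega)$ on a left half-line and with $f_1(\cdot,\omega)$ on a right half-line. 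I fix such an $\eps$ and set $\mu:=\lambda-\eps/3<\lambda$.

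First I would record some elementary facts about \eqref{eq ODE lambda} with parameter $\mu$. Since $a(\cdot,\omega)>0$ and, by (H2), $H$ is locally Lipschitz in the momentum, for fixed $\omega$ this is a scalar ODE with uniqueness of the Cauchy problem; thus two of its solutions that agree at one point coincide on their common interval, and distinct solutions never cross. Evaluating at a contact point one gets: a solution $g$ of \eqref{eq ODE lambda} (with $\mu$) that lies below $f_2(\cdot,\omega)$ somewhere stays strictly below it to the right; it cannot touch $f_1(\cdot,\omega)$ from below to the right; and whenever it meets $f_1(\cdot,\omega)$ from above it does so transversally, with $(g-f_1)'=(\mu-\lambda)/a<0$ there. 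In particular, for $y\in\R$, the solution $g_y(\cdot,\omega)$ of \eqref{eq ODE lambda} (with $\mu$) such that $g_y(y,\omega)=f_2(y,\omega)$ drops strictly below $f_2(\cdot,\omega)$ to the right of $y$ and exists on $[y,\sigma_y(\omega))$, where $\sigma_y(\omega)\in(y,+\infty]$ is its first contact with $f_1(\cdot,\omega)$; on $[y,\sigma_y(\omega))$ it remains confined in the strip between the two barriers, which are bounded.

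The heart of the matter — the novel ingredient — is the claim that the event
\[
E:=\bigl\{\omega:\ \sigma_y(\omega)<+\infty\ \text{for some }y\in\R\bigr\}
\]
has positive probability. I would prove this by contradiction. If $\P(E)=0$, then for $\P$-a.e.\ $\omega$ and every $y\in\R$ the solution $g_y(\cdot,\omega)$ stays in the open strip $\bigl(f_1(\cdot,\omega),f_2(\cdot,\omega)\bigr)$ on all of $[y,+\infty)$. For $y'<y$ one has $g_{y'}(y,\omega)<f_2(y,\omega)=g_y(y,\omega)$, whence (no crossing) $g_{y'}(\cdot,\omega)<g_y(\cdot,\omega)$ on $[y,+\infty)$; so $y\mapsto g_y(x,\omega)$ is non-increasing and bounded below by $f_1(x,\omega)$, and $G(x,\omega):=\lim_{y\to-\infty}g_y(x,\omega)$ is well defined on $\R$. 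The uniform local $\CC^1$ bounds satisfied by the $g_y(\cdot,\omega)$ inside the strip (which follow from the trapping and from $a(\cdot,\omega)>0$) make $G(\cdot,\omega)$ a bounded solution of \eqref{eq ODE lambda} (with $\mu$) of class $\CC^1$ on $\R$, with $f_1(\cdot,\omega)\leqslant G(\cdot,\omega)\leqslant f_2(\cdot,\omega)$; the contact computation above excludes equality with either barrier, so $f_1<G<f_2$ in $\R$, a.s. Moreover $G$ is stationary: from $a(\cdot+z,\omega)=a(\cdot,\tau_z\omega)$ and $H(\cdot+z,\cdot,\omega)=H(\cdot,\cdot,\tau_z\omega)$ one gets $g_{y-z}(x,\tau_z\omega)=g_y(x+z,\omega)$, hence $G(x,\tau_z\omega)=G(x+z,\omega)$ for all $x,z$, and the joint measurability of $G$ is obtained exactly as in Lemma~\ref{lemma measurable minimal sol}, using that $\Omega$ is Polish and $\P$ complete. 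Then $G\in\Solfrak(\mu;f_1,f_2)$ with $\mu<\lambda$, contradicting the hypothesis of~(i). I expect this extraction of a stationary solution trapped between the two barriers to be the main obstacle.

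Granting the claim, I set $\Omega_\eps:=E$ and, for $\omega\in E$, build $g_\eps$ as follows. Pick $y$ with $\sigma_y(\omega)<+\infty$; after a translation, $y=0$. Let $g_\eps$ solve an equation of the form $a(x,\omega)g'+H(x,g,\omega)=\phi(x)$ for a suitable $\CC^1$ function $\phi$ with $\mu\leqslant\phi\leqslant\lambda$ everywhere: $\phi\equiv\lambda$ on $(-\infty,0]$, so that (by uniqueness) $g_\eps$ coincides with $f_2(\cdot,\omega)$ there; then $\phi$ decreases to $\mu$ on $[0,\eta]$ with $\eta>0$ small, after which $g_\eps$ follows \eqref{eq ODE lambda} with parameter $\mu$. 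For $\eta$ small enough, $g_\eps$ stays $\eta$-close to $g_0(\cdot,\omega)$ on the compact interval up to $\sigma_0(\omega)$, hence it still crosses $f_1(\cdot,\omega)$ at a finite point (since $g_0(\cdot,\omega)$ crosses $f_1$ transversally there). Finally I let $\phi$ increase back to $\lambda$ over a ramp $[s,s+\eta]$, tuning the position $s$ by a shooting argument: as $s$ grows, the value $g_\eps(s+\eta)-f_1(s+\eta,\omega)$ passes continuously from positive (ramp too early, $g_\eps$ still above $f_1$) to negative (ramp too late, $g_\eps$ already below $f_1$), so for some $s^\ast$ it vanishes; for that $s^\ast$, $\phi\equiv\lambda$ to the right of $s^\ast+\eta$ and $g_\eps(s^\ast+\eta)=f_1(s^\ast+\eta,\omega)$, so uniqueness forces $g_\eps\equiv f_1(\cdot,\omega)$ there. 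The resulting $g_\eps$ is of class $\CC^1$, equals $f_2(\cdot,\omega)$ on a left half-line and $f_1(\cdot,\omega)$ on a right half-line, is squeezed between the two in between (hence bounded), and satisfies $a\,g_\eps'+H(\cdot,g_\eps,\cdot)=\phi\geqslant\mu\geqslant\lambda-\eps$. Proposition~\ref{prop corrector argument}(ii) then gives $\HV^L(H)(\theta)\geqslant\lambda-\eps$ for all $\theta\in(\theta_1,\theta_2)$, and letting $\eps\to0^+$ yields~(i). Statement~(ii) is proved symmetrically, working with $\mu:=\lambda+\eps/3>\lambda$ — for which solutions issued from $f_1$ rise, cannot return to $f_1$, and meet $f_2$ transversally from below — so that the corresponding monotone limit would lie in $\Solfrak(\mu;f_1,f_2)$ with $\mu>\lambda$, while the bridge (now ascending from $f_1$ to $f_2$) is a subsolution of \eqref{eq ODE lambda} with $\lambda+\eps$ in place of $\lambda$, to which Proposition~\ref{prop corrector argument}(i) applies.
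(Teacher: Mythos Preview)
Your proof follows the same core strategy as the paper's: fix $\mu<\lambda$, show that trajectories of \eqref{eq ODE lambda} with parameter $\mu$ launched from $f_2$ must hit $f_1$ in finite time (otherwise a stationary element of $\Solfrak(\mu;f_1,f_2)$ would exist), and use the resulting finite bridge together with Proposition~\ref{prop corrector argument}. The implementations differ in two places. For the contradiction step, the paper observes that if every such trajectory stays in the strip then the \emph{deterministic} family of trapped $\mu$--solutions is nonempty; this being a shift--invariant event, it holds almost surely, and the pointwise supremum over that family (handled as in Lemma~\ref{lemma measurable minimal sol}) produces the forbidden stationary solution. You instead exploit the monotonicity in the starting point $y$ to pass to the limit $G=\lim_{y\to-\infty}g_y$ and read off stationarity directly from the shift covariance $g_{y-z}(x,\tau_z\omega)=g_y(x+z,\omega)$; this is a bit more direct and avoids the 0--1 law plus the sup construction. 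For the bridge $g_\eps$, the paper glues the piecewise function $f_2\mid g\mid f_1$ and smooths the two corners by mollification and a cutoff, verifying the inequality by explicit estimates; you solve a single ODE with a $C^1$ right--hand side $\phi\in[\mu,\lambda]$ and use a shooting argument in the location of the second ramp to land exactly on $f_1$. Both constructions are valid; yours is cleaner conceptually, the paper's is more elementary (no continuous--dependence/shooting needed).

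Two small remarks. Your opening claim that (ii) follows from (i) by ``reversing the orientation of~$\R$'' is not literally correct, since this flips the sign in front of $a$; however, the direct argument you sketch at the end (launch $\mu$--trajectories from $f_1$ with $\mu>\lambda$, etc.) is fine and matches the paper's treatment of (ii). Also, on the second ramp your $g_\eps$ need not stay between $f_1$ and $f_2$ as you assert --- it can dip strictly below $f_1$ before rejoining it at $s^\ast+\eta$ --- but this is harmless, since boundedness follows anyway from continuity on the compact interval $[0,s^\ast+\eta]$.
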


\begin{proof}
(i) Let us fix $\mu<\lambda$ and $\eps>0$.  In view of Proposition \ref{prop corrector argument}, it suffices to show that, for every fixed $\omega$ in a set of probability 1, there exists a $C^1$ and bounded function $g_\eps$ such that 
\begin{equation}\label{claim lower bound}
a(x,\omega)g_\eps'+H(x,g_\eps,\omega)> \mu-2\eps\qquad\hbox{in $\R$}
\end{equation}
and satisfying $g_\eps(\cdot)=f_2(\cdot,\omega)$\  in $(-\infty,- L ]$,\  $g_\eps(\cdot)=f_1(\cdot,\omega)$\ in $[ L ,+\infty)$ for $ L >0$ large enough.\smallskip

To this aim, pick $R>1+\max\{\|f_1\|_{L^\infty(\R\times\Omega)} ,\|f_2\|_{L^\infty(\R\times\Omega)}\}$ and denote by $C_R$ the Lipschitz constant of $H$ on $\R\times[-R,R]\times\R$. Let us pick $\omega\in\Omega_0:=\{\omega\,:\,f_1(\cdot,\omega)<f_2(\cdot,\omega)\ \hbox{in $\R$}\}$. 
For  fixed $n\in\N$, let us denote by $\tilde g_n:[-n,b)\to\R$ the unique solution of the following ODE 
\begin{equation}\label{eq ODE lower bound}
a(x,\omega)f'+H(x,f,\omega)=\mu\qquad\hbox{in $[-n,b)$}
\end{equation}
satisfying $\tilde g_n(-n)=f_2(-n,\omega)$, for some $b>-n$. The existence of such a $b$ is guaranteed by the  classical Cauchy-Lipschitz Theorem. Let us denote by $I$ the maximal interval of the form $(-n,b)$ where such a $\tilde g_n$ is defined. 
It is easily seen that $\tilde g_n<f_2(\cdot,\omega)$ on $I$: since $\mu<\lambda$ and $a(\cdot,\omega)>0$ on $\R$, we have $g_n'(x_0)<f_2'(x_0,\omega)$ at any possible point $x_0>-n$ where $g_n(x_0)=f_2(x_0,\omega)$, so in fact this equality never occurs. Let us denote by $y_n:=\sup\{y\in I\,:\,\tilde g_n>f_1(\cdot,\omega)\ \hbox{in $(-n,y)$}\}$. 
 We claim that there exists $n\in\N$ such that $y_n$ is finite. If this were not the case, then $f_1(\cdot,\omega)<\tilde g_n<f_2(\cdot,\omega)$ in $(-n,+\infty)$. Being $a(\cdot,\omega)>0$, we infer from 
\eqref{eq ODE lower bound} that the functions $\tilde g_n$ are locally equi-Lipchitz and equi-bounded on their domain of definition. Up to extracting a subsequence, we derive that the functions $\tilde g_n$ locally uniformly converge  to a function $\tilde g$ on $\R$, and also locally in $C^1$ norm being each $\tilde g_n$ a solution of \eqref{eq ODE lower bound}. Hence $\tilde g$ is a solution of \eqref{eq ODE lower bound} on $\R$ satisfying 
$f_1(\cdot,\omega)\leqslant \tilde g \leqslant f_2(\cdot,\omega)$ \ in $\R$. The same kind of argument employed above shows that in fact these inequalities are actually strict, namely $\tilde g$ satisfies 
\begin{equation}\label{eq bounds}
f_1(\cdot,\omega)< \tilde g < f_2(\cdot,\omega)\qquad\hbox{in $\R$.}
\end{equation}
Let us denote by $\Solfrak(\mu; f_1,f_2)(\omega)$ the set of deterministic $C^1$ functions satisfying \eqref{eq bounds} which solve 
equation \eqref{eq ODE lower bound} in $\R$. By the stationary character of $a,\, H,\, f_1,\,f_2$, it is easily checked that $\tilde g\in \Solfrak(\mu; f_1,f_2)(\omega)$ if and only if $\tilde g(\cdot+z)\in \Solfrak(\mu; f_1,f_2)(\tau_z\omega)$. By the ergodicity assumption, we derive that the set $\hat\Omega:=\{\omega\in\Omega\,:\,\Solfrak(\mu; f_1,f_2)(\omega)=\emptyset\}$ has either probability 0 or 1. If it has probability 0, for every $\omega\in\Omega$ we set 
\[
\overline g(x,\omega):=\sup\limits_{\tilde g\in\Solfrak(\mu; f_1,f_2)(\omega)} \tilde g(x,\omega)\qquad\hbox{for all $x\in\R$,}
\]
where we agree that $\overline g(\cdot,\omega)\equiv 0$ when $\omega\in\hat\Omega$. 
The function $\overline g:\R\times\Omega\to\R$ is jointly measurable and stationary, and 
$\overline g(\cdot,\omega)\in\Solfrak(\mu; f_1,f_2)(\omega)$ almost surely. This follows by arguing as in the proof of
Lemma \ref{lemma measurable minimal sol}.  Note that the fact that $\overline g$ is stationary depends on the fact that the both the ingredients of  equation \eqref{eq ODE lower bound}, both the functions $f_1,f_2$ are stationary. The fact that $\ol g$ satisfies \eqref{eq bounds} with strict inequalities can be shown by arguing as above, or by invoking Lemma \ref{lem:ordqu}. 
We derive that $\overline g$ belongs to  $\Solfrak(\mu;f_1,f_2)$, contradicting the assumption.  

This implies that $\hat\Omega$ has probability 1, hence for every $\omega\in \hat\Omega$  there exist points 
$\hat x<\hat y$ in $\R$ and a function $g:[\hat x,\hat y]\to\R$ which solves equation \eqref{eq ODE lower bound} in 
$(\hat x,\hat y)$  and satisfies 
\begin{equation*}
f_1(\cdot,\omega)<g<f_2(\cdot,\omega)\quad\hbox{in $(\hat x,\hat y)$},
\quad g(\hat x)=f_2(\hat x,\omega),
\quad g(\hat y)=f_1(\hat y,\omega).
\end{equation*}
Let us extend the function $g$ to the whole $\R$ by setting $g=f_2$ on $(-\infty,\hat x)$ and $g=f_1$ on $(\hat y,+\infty)$. 
Take a sequence of standard even convolution kernels $\rho_n$ supported in $(-1/n,1/n)$ and set $g_n:=\rho_n*g$. Let us pick $r>0$ and choose $n\in\N$ big enough so that $1/n<r$ and $\|g-g_n\|_\infty<1$. We claim that we can choose $n$ big enough such that 
\begin{equation} \label{claim2 lower bound}
a(x,\omega)g_n'+H(x,g_n,\omega)>\mu-\eps\qquad\hbox{in $(\hat x-r,\hat y+r)$.}
\end{equation}
To this aim, first observe that the map $x\mapsto H(x,g(x),\omega)$ is $K$-Lipschitz continuous in 
$(\hat x-2r,\hat y+2r)$, for some constant $K$, due to the fact that $g$ is bounded and locally Lipschitz on $\R$.  For every $x\in (\hat x-r,\hat y+r)$ and $|y|\leqslant 1/n$ we have 
\begin{eqnarray*}
H(x,g_n(x),\omega) 
&\geqslant&
H(x,g(x),\omega)-C_R\|g-g_n\|_{_{L^\infty([\hat x-r,\hat y+r])}}\\
&\geqslant& 
H(x-y,g(x-y),\omega)-C_R\|g-g_n\|_{_{L^\infty([\hat x-r,\hat y+r])}}-\dfrac{K}{n},
\end{eqnarray*}
hence
\begin{equation*}\label{eq inequality 1}
H(x,g_n(x),\omega)
\geqslant
\int_{-1/n}^{1/n} H(x-y,g(x-y),\omega)\,\rho_n(y)\, dy -C_R\|g-g_n\|_{_{L^\infty([\hat x-r,\hat y+r])}}-\dfrac{K}{n}.
\end{equation*}
For all $x\in (\hat x-r,\hat y+r)$ we have
\begin{eqnarray*}
a(x,\omega)g_n'(x)+H(x,g_n(x),\omega)
&\geqslant& 
-C_R\|g-g_n\|_{_{L^\infty([\hat x-r,\hat y+r])}}-\dfrac{K+\kappa \|g'\|_{_{L^\infty([\hat x-2r,\hat y+2r])}}}{n}\\
&+& \int_{-1/n}^{1/n} \Big( a(x-y,\omega)g'(x-y)+H(x-y,g(x-y),\omega)  \Big)\,\rho_n(y)\, dy \\
&>& 
\mu -\eps
\end{eqnarray*}
for $n\in\N$ big enough, where we have used the fact that $g_n\to g$ in $C(\R)$ as $n\to +\infty$. 
Let us now take $\xi\in C^1(\R)$ such that 
\[
0\leqslant \xi \leqslant 1\quad\hbox{in $\R$,}
\quad
\xi\equiv 0\quad\hbox{in $(-\infty,\hat x-r]\cup [\hat y+r,+\infty)$},
\quad
\xi\equiv 1\quad\hbox{in $[\hat x-r/2,\hat y+r/2]$},
\]
and we set $g_\eps(x):=\xi(x) g_n(x)+(1-\xi(x))g(x)$ for all $x\in\R$. We will show that we can choose $n$ in 
$\N\cap (1/r,+\infty)$ big enough so that  $g_\eps$ satisfies \eqref{claim lower bound}. We only need to check it in $(\hat x-r,\hat y+r)$. 
For notational simplicity, we momentarily suppress $(x,\omega)$ from some of the notation below and observe that
\begin{align*}\label{eq:concur}
ag_\eps' + H(x,g_\eps,\omega) &= \xi\big(a g_n' + H(x,g_n,\omega)\big) + (1-\xi)\big(a g ' + H(x,g,\omega )\big)
\nonumber\\
&\quad + \xi\big( H(x,\xi g_n + (1-\xi) g,\omega) -H(x,g_n,\omega)\big) \\
&\quad +
(1-\xi)\big( H(x,\xi g_n + (1-\xi) g,\omega) - H(x, g,\omega )\big) +a\xi'( g_n  -  g) \nonumber\\
&\quad >
\mu-\eps-\big(C_R+\|\xi'\|_\infty\big)\|g-g_n\|_{_{L^\infty([\hat x-r,\hat y+r])}}.
\end{align*}
By choosing $n$ big enough we get the assertion.\smallskip

(ii) We will just sketch the proof, since the argument is analogous to the one presented above.  Let us fix $\mu>\lambda$ and $\eps>0$.  In view of Proposition \ref{prop corrector argument}, it suffices to show that, for every fixed $\omega$ in a set of probability 1, there exists a $C^1$ and bounded function $g_\eps$ such that 
\begin{equation}\label{claim upper bound}
a(x,\omega)g_\eps'+H(x,g_\eps,\omega)< \mu+2\eps\qquad\hbox{in $\R$}
\end{equation}
and satisfying $g_\eps(\cdot)=f_1(\cdot,\omega)$\  in $(-\infty,- L ]$,\  $g_\eps(\cdot)=f_2(\cdot,\omega)$\ in $[ L ,+\infty)$ for $ L >0$ large enough.\smallskip

To this aim, pick $R>1+\max\{\|f_1\|_{L^\infty(\R\times\Omega)} ,\|f_2\|_{L^\infty(\R\times\Omega)}\}$ and denote by $C_R$ the Lipschitz constant of $H$ on $\R\times[-R,R]\times\R$. Let us pick $\omega\in\Omega_0:=\{\omega\,:\,f_1(\cdot,\omega)<f_2(\cdot,\omega)\ \hbox{in $\R$}\}$. 
For  fixed $n\in\N$, let us denote by $\tilde g_n:[-n,b)\to\R$ the unique solution of the following ODE 
\begin{equation}\label{eq ODE upper bound}
a(x,\omega)f'+H(x,f,\omega)=\mu\qquad\hbox{in $[-n,b)$}
\end{equation}
satisfying $\tilde g_n(-n)=f_1(-n,\omega)$, for some $b>-n$. Arguing as in item (i) and by exploiting the assumption 
$\Solfrak(\mu;f_1,f_2)=\emptyset$, we get that there exists a set 
$\hat\Omega$ of probability 1 such that, for every $\omega\in \hat\Omega$,  there exist points 
$\hat x<\hat y$ in $\R$ and a function $g:[\hat x,\hat y]\to\R$ which solves equation \eqref{eq ODE upper bound} in 
$(\hat x,\hat y)$  and satisfies 
\begin{equation*}
f_1(\cdot,\omega)<g<f_2(\cdot,\omega)\quad\hbox{in $(\hat x,\hat y)$},
\quad g(\hat x)=f_1(\hat x,\omega),
\quad g(\hat y)=f_2(\hat y,\omega).
\end{equation*}
We extend the function $g$ to the whole $\R$ by setting $g=f_1$ on $(-\infty,\hat x)$ and $g=f_2$ on $(\hat y,+\infty)$, then we  
take a sequence of standard even convolution kernels $\rho_n$ supported in $(-1/n,1/n)$ and we set $g_n:=\rho_n*g$.
We choose $n$ big enough so that 
\begin{equation*} \label{claim2 upper bound}
a(x,\omega)g_n'+H(x,g_n,\omega)<\mu+\eps\qquad\hbox{in $(\hat x-r,\hat y+r)$.} 
\end{equation*}
Next, we define $g_\eps(x):=\xi(x) g_n(x)+(1-\xi(x))g(x)$ for all $x\in\R$, where $\xi$ is a $C^1$-function on $\R$ such that 
\[
0\leqslant \xi \leqslant 1\quad\hbox{in $\R$,}
\quad
\xi\equiv 0\quad\hbox{in $(-\infty,\hat x-r]\cup [\hat y+r,+\infty)$},
\quad
\xi\equiv 1\quad\hbox{in $[\hat x-r/2,\hat y+r/2]$}.
\]
Arguing as in (i), we infer that we can choose $n$ big enough so that $g_\eps$ satisfies \eqref{claim upper bound}. The proof is complete.  
\end{proof}

\begin{remark}\label{oss lower bound}
In \cite{Y21b, DKY23, D23a} the lower (respectively, upper) bound for $\HV^L(H)$  (resp., $\HV^U(H)$) 
appearing in the statement of Theorem \ref{teo lower bound} was 
proved for Hamiltonians of the form $G(p)+V(x,\omega)$ by showing that $\inf_\R \left(f_2-f_1\right)=0$ almost surely. The proof of this latter property crucially relies on the assumption that the pair $(a,V)$ satisfies a scaled valley (resp., hill) condition, see  \cite[Lemma 4.7]{Y21b}, \cite[Lemma 4.2]{DKY23}, \cite[Lemma 4.3]{D23a}. In the periodic setting,  this condition is met by constant potentials only. It is worth noticing that the hypothesis $\Solfrak(\mu;f_1,f_2)=\emptyset$ for every $\mu\not=\lambda$  herein assumed is always met when $\inf_\R \left(f_2-f_1\right)=0$ almost surely, in view of  Lemma \ref{lem:ordqu}.

\end{remark}

\section{The homogenization result}\label{sec:homogenization}

This section is devoted to the proof of the homogenization result stated in Theorem \ref{thm:genhom}. To get to it, 
we need to prove first a series of preparatory results. Throughout the section, we will assume $G\in\Ham$ for constants $\alpha_0,\alpha_1>0$ and $\gamma>2$. \smallskip 

We start by recalling that we have denoted by $\Sol_\lambda$ the family of essentially bounded and jointly measurable stationary functions $f:\R\times\Omega\to\R$ such that $f(\cdot,\omega)$ is a (bounded) $C^1$ solution of the following ODE:
\begin{equation}\label{eq2 ODE lambda}
a(x,\omega)f'+H(x,f,\omega)=\lambda\qquad\hbox{in $\R$}
\tag{ODE$_\lambda$}
\end{equation}
This set is nonempty whenever $\lambda\geqslant \lambda_0$, according to the results presented in Section \ref{sec:correctors}. \smallskip

We define the set-valued map $\Theta: [\lambda_0,+\infty)\to\parts(\R)$ as follows:
\[
\Theta(\lambda):=\{\EE[f(0,\cdot)]\,:\,f\in\Sol_\lambda\}
\qquad
\hbox{for all $\lambda\geqslant \lambda_0$}.
\] 
The following holds. 

\begin{prop}\label{prop Theta}
The set-valued map $\Theta: [\lambda_0,+\infty)\to\parts(\R)$  satisfies the following properties:
\begin{itemize}
\item[\em (i)] \quad $\Theta(\lambda)$ is a nonempty compact set for every $\lambda\geqslant \lambda_0$;\smallskip
\item[\em (ii)] \quad $\Theta(\cdot)$ is locally equi-compact;\smallskip
\item[\em (iii)] \quad $\Theta(\cdot)$ is upper semicontinuous;\smallskip
\item[\em (iv)] \quad $\Theta(\lambda_1)\cap\Theta(\lambda_2)=\emptyset$ \quad if $\lambda_1\not=\lambda_2$;\smallskip
\item[\em (v)] \ $\displaystyle\lim_{\lambda\to +\infty} \max\Theta(\lambda)=+\infty$, \quad  
$\displaystyle\lim_{\lambda\to -\infty} \min\Theta(\lambda)=-\infty$.
\end{itemize}
\end{prop}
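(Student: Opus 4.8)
\textbf{Proof strategy for Proposition \ref{prop Theta}.}

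The plan is to establish each of the five properties using the structural results already proved in Sections \ref{sec:correctors} and \ref{sec:bridging}, together with basic compactness arguments. For property (i), nonemptiness is immediate since $\Sol_\lambda\neq\emptyset$ for $\lambda\geqslant\lambda_0$ by Proposition \ref{prop minimal solutions} and Lemmas \ref{lemma monotonicity f lambda}, \ref{lemma measurable minimal sol}. To see that $\Theta(\lambda)$ is compact, I would first note that it is bounded: every $f\in\Sol_\lambda$ satisfies $\|f\|_\infty\leqslant K(\lambda)$ by Proposition \ref{prop PDE to ODE}, so $|\EE[f(0,\cdot)]|\leqslant K(\lambda)$. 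For closedness, take a sequence $\theta_n=\EE[f_n(0,\cdot)]\to\theta$ with $f_n\in\Sol_\lambda$; I would invoke the compactness of $\Sol_\lambda(\omega)$ in $\CC(\R)$ together with the sharp uniform bounds of Proposition \ref{prop sharp bounds} and a diagonal/measurable-selection argument (as in the proof of Lemma \ref{lemma measurable minimal sol}) to extract a stationary limit $f\in\Sol_\lambda$ with $\EE[f(0,\cdot)]=\theta$, passing the mean to the limit via dominated convergence thanks to the uniform $L^\infty$ bound.

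Property (ii), local equi-compactness, follows from the same observation: for $\lambda$ in a compact interval $[\lambda_0,\Lambda]$, Proposition \ref{prop PDE to ODE} gives a uniform bound $K(\Lambda)$ on $\|f\|_\infty$ for all $f\in\Sol_\lambda$, hence $\bigcup_{\lambda\in[\lambda_0,\Lambda]}\Theta(\lambda)$ is a bounded, hence relatively compact, subset of $\R$. For upper semicontinuity (iii), I would take $\lambda_n\to\lambda$ and $\theta_n\in\Theta(\lambda_n)$ with $\theta_n\to\theta$, pick $f_n\in\Sol_{\lambda_n}$ realizing $\theta_n$, and use that the $f_n$ are equi-bounded (uniform $K$ on a neighborhood of $\lambda$) and, since each solves \eqref{eq2 ODE lambda} with $a>0$, locally equi-Lipschitz; extracting a locally $\CC^1$-convergent subsequence and using stability of the ODE plus the measurable-selection machinery yields a stationary $f\in\Sol_\lambda$ with $\EE[f(0,\cdot)]=\theta$, so $\theta\in\Theta(\lambda)$.

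Property (iv), injectivity, is the place where the monotonicity Lemma \ref{lemma monotonicity f lambda} does the work: if $\lambda_1<\lambda_2$ and $\omega$ is in the full-measure set where $\lambda_0(\omega)=\lambda_0$, then every solution of \eqref{eq2 ODE lambda} at level $\lambda_1$ is strictly below $\ul f_{\lambda_1}(\cdot,\omega)$... more precisely, Lemma \ref{lemma monotonicity f lambda} gives $\ol f_{\lambda_1}(\cdot,\omega)<\ol f_{\lambda_2}(\cdot,\omega)$ and $\ul f_{\lambda_1}(\cdot,\omega)>\ul f_{\lambda_2}(\cdot,\omega)$, but I actually need a separation at the level of means. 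The cleanest route: any $f\in\Sol_{\lambda_2}$ satisfies $f(\cdot,\omega)\geqslant\ul f_{\lambda_2}(\cdot,\omega)$ pointwise, but this alone does not separate from $\Sol_{\lambda_1}$; instead I will show directly that no single stationary $f$ can solve both \eqref{eq2 ODE lambda} at levels $\lambda_1$ and $\lambda_2$ and, more to the point, that $\EE[\ul f_{\lambda_2}(0,\cdot)]>\EE[\ol f_{\lambda_1}(0,\cdot)]$ — wait, that ordering is wrong. Let me reconsider: Lemma \ref{lemma monotonicity f lambda} says for $\mu>\lambda$, $\ul f_\mu<\ul f_\lambda\leqslant\ol f_\lambda<\ol f_\mu$, so the \emph{interval of slopes} $[\ul f_\mu,\ol f_\mu]$ at level $\mu$ strictly contains that at level $\lambda$. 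Thus if $\lambda_1<\lambda_2$, every $f\in\Sol_{\lambda_1}$ has $\ul f_{\lambda_2}(\cdot,\omega)<\ul f_{\lambda_1}(\cdot,\omega)\leqslant f(\cdot,\omega)\leqslant\ol f_{\lambda_1}(\cdot,\omega)<\ol f_{\lambda_2}(\cdot,\omega)$ almost surely; but this does not prevent the \emph{means} from coinciding. To close (iv) I would instead argue via Proposition \ref{prop consequence existence corrector}: if $\theta\in\Theta(\lambda_1)\cap\Theta(\lambda_2)$, then there are correctors at both levels with the same $\theta$, forcing $\HV^L(\theta)=\HV^U(\theta)$ to equal both $\lambda_1$ and $\lambda_2$, a contradiction. \textbf{This is the step I expect to be the main obstacle}, because it requires lifting the ODE solutions $f_i\in\Sol_{\lambda_i}$ back to viscosity solutions of \eqref{eq cellPDE} via Proposition \ref{prop PDE to ODE} and invoking Proposition \ref{prop consequence existence corrector}, and one must check the means are genuinely realized — this is exactly the content linking $\Theta$ to $D$. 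Finally, property (v) follows from Lemma \ref{lemma monotonicity f lambda} combined with Proposition \ref{prop sharp bounds}: as $\lambda\to+\infty$ one can choose, as in the proof of Lemma \ref{lemma monotonicity f lambda}, arbitrarily large constant-slope barriers $p_1$ with $\inf_x H(x,p_1,\omega)>\lambda$, forcing $\ol f_\lambda$ (hence $\max\Theta(\lambda)$, which is $\EE[\ol f_\lambda(0,\cdot)]$) to exceed any prescribed bound; symmetrically for $\lambda\to-\infty$ using that $H$ is coercive from below so $p^-_\lambda\to-\infty$, whence $\min\Theta(\lambda)=\EE[\ul f_\lambda(0,\cdot)]\to-\infty$.
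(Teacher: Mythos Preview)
Your approach to (iii), and by extension your closedness argument in (i), has a genuine gap. You propose to extract a locally $\CC^1$-convergent subsequence of the $f_n$ and then invoke ``measurable-selection machinery'' to produce a stationary limit $f\in\Sol_\lambda$. But a subsequence chosen $\omega$-by-$\omega$ has no reason to be jointly measurable, let alone stationary; and there is no off-the-shelf selection theorem that turns a sequence of stationary random functions into a stationary limit along an $\omega$-dependent subsequence. The paper's key idea, which you are missing, is purely deterministic at the level of the $\theta_n$: pass to a \emph{monotone} subsequence of the real numbers $\theta_n$, and then invoke Lemma~\ref{lem:order} (two stationary sub/supersolutions with ordered means are pointwise ordered a.s.) to conclude that the corresponding $f_n(\cdot,\omega)$ are themselves monotone in $n$ for a.e.\ $\omega$. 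The limit is then simply $\sup_n f_n$ (or $\inf_n f_n$), which is automatically jointly measurable and stationary, belongs to $\Sol_\lambda$ by $\CC^1$-stability, and has mean $\theta$ by dominated convergence. No selection argument is needed. This monotonicity trick is the heart of the proof and is what also gives (i) and (ii) for free once (iii) is established.

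For (iv) you have the roles reversed: it is \emph{not} the main obstacle. Your route via Proposition~\ref{prop consequence existence corrector} works, but the paper's argument is one line: Lemma~\ref{lem:ordqu} gives a uniform pointwise gap $|f_1(x,\omega)-f_2(x,\omega)|>\delta(\lambda_2-\lambda_1)$ a.s., so the means cannot coincide. Your discussion of Lemma~\ref{lemma monotonicity f lambda} correctly observes that the nesting of the extremal solutions does not by itself separate the means, but Lemma~\ref{lem:ordqu} does exactly that.

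Your treatment of (v) is essentially the paper's.
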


\begin{proof}
The fact that $\Theta(\lambda)$ is nonempty for every $\lambda\geqslant \lambda_0$ has been already remarked above. 
According to Proposition \ref{prop sharp bounds}, for every $\lambda\geqslant \lambda_0$ we have 
\[
\sup_{\mu>\lambda} p_\mu^-\leqslant \theta \leqslant \inf_{\mu>\lambda} p_\mu^+.
\]
This shows that $\Theta(\cdot)$ is locally equi-bounded. The fact that $\Theta(\cdot)$ is locally equi-compact is a consequence of the upper semicontinuity of $\Theta(\cdot)$, that we proceed to show below. 

(iii) Let $\lambda_n\to \lambda$ in $[\lambda_0,+\infty)$ and $\theta_n\in\Theta(\lambda_n)$ for each $n\in\N$ with $\theta_n\to\theta$. We aim to show that $\theta\in\Theta(\lambda)$. Up to extracting a subsequence, if necessary, we can furthermore assume that $(\theta_n)_n$ is monotone, let us say nondecreasing for definitiveness. Then, according to Lemma \ref{lem:order}, we must have 
\[
f_n(\cdot,\omega)\leqslant f_{n+1}(\cdot,\omega)\quad\hbox{in $\R$}\qquad \hbox{a.s. in $\Omega$\qquad for all $n\in\N$.}
\]
According to Corollary \ref{cor:lelam}, the functions $\big(f_n(\cdot,\omega)\big)_n$ are equi-bounded in $\R$ for every $\omega\in\Omega$, and hence they are also locally equi-Lipschitz since they solve \eqref{eq2 ODE lambda} and $a(\cdot,\omega)>0$ in $\R$. We conclude that 
\[
f_n(\cdot,\omega)\to f(\cdot,\omega)=\sup_n f_n(\cdot,\omega) \qquad\hbox{for all $\omega\in\Omega$}
\]
in the local $C^1$ topology on $\R$. Since $\lambda_n\to\lambda$, it is easily seen that $f\in\Sol_\lambda$. By the Dominated Convergence Theorem, we conclude that 
\[
\theta
=
\lim_n \theta_n
=
\lim_n \EE[f_n(0,\cdot)]
=
\EE[f(0,\cdot)],
\]
thus showing that $\theta\in\Theta(\lambda)$. 

(iv) Obvious in view of Lemma \ref{lem:ordqu}. 

(v)  Let us prove the first inequality. 
By the fact that $H\in\Ham$, for every fixed $R>0$ we can find $p_2>p_1>R$ and $\lambda=\lambda(R)\in\R$ such that \ 
$\inf_{(x,\omega)} H(x,p_2,\omega)>\lambda>\sup_{(x,\omega)} H(x,p_1,\omega)>\lambda_0$. 
For every fixed $\omega\in\Omega$, we can apply Lemma \ref{lem:inbetw} with $M(\cdot)=p_2$, $m(\cdot):=p_1$ and $G:=H-\lambda$, to deduce the existence of a solution  $f\in\CC^1(\R)$ 
of   \eqref{eq ODE lambda} satisfying $p_1<f(\cdot)<p_2$ on $\R$.  
This implies that $\ol f_\lambda(\cdot,\omega)>p_1$ in $\R$ almost surely, yielding $\max\Theta(\lambda)>p_1>R$. Since the function $\max \Theta(\cdot)$ is also strictly increasing on $[\lambda_0,+\infty)$ in view of Lemma \ref{lemma monotonicity f lambda}, this proves the asserted coercivity property of the function $\max \Theta(\cdot)$. The proof of the second inequality is analogous and is omitted. 
\end{proof}

With the aid of Proposition \ref{prop Theta}, we proceed to show the following crucial result. 

\begin{prop}\label{prop theta image}\ 
The set $\D{Im}(\Theta):=\bigcup\limits_{\lambda\geqslant \lambda_0} \Theta(\lambda)$ is a closed and unbounded subset of $\R$.  
\end{prop}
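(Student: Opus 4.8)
The plan is to prove the two assertions separately, drawing on the structural properties of $\Theta$ already established in Proposition \ref{prop Theta}.

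\medskip

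\textbf{Unboundedness.} This is immediate from property (v) of Proposition \ref{prop Theta}: since $\lim_{\lambda\to+\infty}\max\Theta(\lambda)=+\infty$, the set $\D{Im}(\Theta)$ contains arbitrarily large positive reals. For unboundedness below, I would note that $\Theta(\lambda_0)\subseteq\D{Im}(\Theta)$ is nonempty, but that alone does not give $\inf\D{Im}(\Theta)=-\infty$. Instead I would invoke the second limit in (v): for every $R>0$ there is $\lambda=\lambda(R)$ (necessarily $\geqslant\lambda_0$ since $\Theta$ is only defined there) with $\min\Theta(\lambda)<-R$. Hence $\D{Im}(\Theta)$ is unbounded below as well. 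Actually one should be slightly careful: (v) as stated lets $\lambda\to-\infty$, but $\Theta$ is only defined on $[\lambda_0,+\infty)$; the intended reading is that $\min\Theta(\lambda)\to-\infty$ as $\lambda$ ranges over its domain in the appropriate direction, which is exactly what the proof of (v) via Lemma \ref{lem:inbetw} delivers by choosing $p_1,p_2$ large and negative. So $\D{Im}(\Theta)$ is unbounded in both directions.

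\medskip

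\textbf{Closedness.} This is the main obstacle. Take a sequence $\theta_n\in\D{Im}(\Theta)$ with $\theta_n\to\theta$; I must show $\theta\in\D{Im}(\Theta)$. Pick $\lambda_n\geqslant\lambda_0$ with $\theta_n\in\Theta(\lambda_n)$, and let $f_n\in\Sol_{\lambda_n}$ realize $\theta_n=\EE[f_n(0,\cdot)]$. The key point is to control the sequence $(\lambda_n)$. Since $(\theta_n)$ is bounded, Proposition \ref{prop sharp bounds} (equivalently Corollary \ref{cor:lelam}) gives bounds $\sup_{\mu>\lambda_n}p_\mu^-\leqslant\theta_n\leqslant\inf_{\mu>\lambda_n}p_\mu^+$; using the coercivity/monotonicity of the branch bounds $p_\mu^\pm$ in $\mu$, a bounded $\theta_n$ forces $(\lambda_n)$ to be bounded above. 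It is bounded below by $\lambda_0$. Hence, passing to a subsequence, $\lambda_n\to\lambda\in[\lambda_0,+\infty)$. Now the argument is essentially the proof of upper semicontinuity (part (iii) of Proposition \ref{prop Theta}): passing to a further subsequence we may assume $(\theta_n)$ monotone, so by Lemma \ref{lem:order} the functions $f_n(\cdot,\omega)$ are monotone in $n$ almost surely; by Corollary \ref{cor:lelam} they are equi-bounded, hence (since $a(\cdot,\omega)>0$ and they solve \eqref{eq2 ODE lambda}) locally equi-Lipschitz and in fact convergent in the local $C^1$ topology to $f(\cdot,\omega):=\sup_n f_n(\cdot,\omega)$ (or $\inf_n$). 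Because $\lambda_n\to\lambda$ and the convergence is in $C^1_{loc}$, $f$ solves \eqref{eq2 ODE lambda} with parameter $\lambda$, and $f$ is stationary, jointly measurable and essentially bounded, so $f\in\Sol_\lambda$. Finally, by the Dominated Convergence Theorem $\theta=\lim_n\EE[f_n(0,\cdot)]=\EE[f(0,\cdot)]\in\Theta(\lambda)\subseteq\D{Im}(\Theta)$. This closes the argument. The only subtlety worth double-checking is the measurability of the limit $f$, which follows exactly as in the proof of Lemma \ref{lemma measurable minimal sol}, using that $\Omega$ is Polish with a complete probability measure.
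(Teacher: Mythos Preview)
Your argument has a genuine gap in the boundedness step for $(\lambda_n)$. The bounds from Proposition \ref{prop sharp bounds} read, essentially, $\theta_n\in\big[\sup_{\mu>\lambda_n}p_\mu^-,\ \inf_{\mu>\lambda_n}p_\mu^+\big]$, and since $\mu\mapsto p_\mu^-$ is nonincreasing while $\mu\mapsto p_\mu^+$ is nondecreasing, this interval \emph{expands} as $\lambda_n\to+\infty$. Thus a bounded $\theta_n$ imposes no upper bound on $\lambda_n$: the inequality goes the wrong way to be inverted. Concretely, if $H(x,p,\omega)=|p|^\gamma$, then $p_\mu^\pm=\pm\mu^{1/\gamma}$, and the constraint $|\theta_n|\leqslant\lambda_n^{1/\gamma}$ is compatible with arbitrarily large $\lambda_n$.

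The paper closes this gap by a direct ODE argument: after reducing to $\theta_n\geqslant 0$, one distinguishes whether $f_n(\cdot,\omega)\geqslant 0$ on $\R$ or changes sign. In the first case, Lemma \ref{lem:stat} yields a local minimum $y$ with $0\leqslant f_n(y,\omega)\leqslant\theta_n$, and evaluating \eqref{eq2 ODE lambda} at $y$ gives $\lambda_n=H(y,f_n(y,\omega),\omega)\leqslant\alpha_1(\theta_n^\gamma+1)$. In the second case, by stationarity there is a zero $z$ with $f_n'(z,\omega)\leqslant 0$, whence $\lambda_n\leqslant H(z,0,\omega)\leqslant\alpha_1$. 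Either way $(\lambda_n)$ is bounded, and then one concludes, as you do, via the upper semicontinuity of $\Theta$. An alternative shortcut that \emph{does} work, and which you might prefer, is to combine Proposition \ref{prop consequence existence corrector} with Proposition \ref{appB prop HF}: since $\theta_n\in\Theta(\lambda_n)$ forces $\lambda_n=\HV^L(H)(\theta_n)$, and $\HV^L(H)$ is locally bounded (indeed locally Lipschitz), boundedness of $(\theta_n)$ immediately yields boundedness of $(\lambda_n)$. Your treatment of unboundedness and of the final passage to the limit is fine.
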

\begin{proof}
The fact that  $\D{Im}(\Theta)$ is unbounded is a direct consequence of Proposition \ref{prop Theta}-(v). Let us show it is closed. Pick an accumulation point $\theta$ of $\D{Im}(\Theta)$ and a sequence $(\theta_n)_n\subset \D{Im}(\Theta)$ such that $\theta_n \to \theta$. Let $(\lambda_n, f_n)\in [\lambda_0, +\infty)\times\Sol_{\lambda_n}$ such that $\theta_n:=\EE[f_n(0,\cdot)]$.\medskip

\noindent{\bf Claim: $\mathbf (\lambda_n)_n$ is bounded.}\smallskip

Up to extracting a subsequence, we can assume that the $\theta_n$ have constant sign.\smallskip

\noindent\underline{Case $\theta_n\geqslant 0$ for all $n\in\N$.}\smallskip\\
For every $n\in\N$, let us set $C_n(\omega):=\{x\in\R\,:\, f_n(x,\omega)\geqslant \theta_n\}$. Then $C_n(\omega)$ is an almost surely nonempty, closed random stationary set, in particular 
\begin{equation}\label{eq random set}
C_n(\omega)\cap(-\infty,-k)\not=\emptyset\quad\hbox{and}\quad C_n(\omega)\cap(k,+\infty)\not=\emptyset \quad\hbox{for all  $k\in\N$}\qquad \hbox{almost surely,}
\end{equation}
see Propositions 3.2. and 3.5 in \cite{DS09}, for instance. 

If $f_n(\cdot,\omega)\geqslant 0$ in $\R$ almost surely, then, according to Lemma \ref{lem:stat}, for every fixed $\omega$ in a set of probability 1, there exists a local minimum point $y\in\R$ of $f(\cdot,\omega)$ in $\R$ with $0\leqslant f(y,\omega)\leqslant \theta_n$. Hence 
\[
\lambda_n 
=
a(y,\omega)f'_n(y,\omega)+H(y,f_n(y,\omega),\omega)
=
H(y,f_n(y,\omega),\omega)
\leqslant
\alpha_1(|f_n(y,\omega)|+1)
\leqslant
\alpha_1 (\theta_n+1). 
\]
If, on the other hand, $f_n(\cdot,\omega)$ changes sign almost surely, then, according to \eqref{eq random set}, for every fixed $\omega$ in a set of probability 1, there exists a point $z\in\R$ such that $f_n(z,\omega)=0$ and $f'_n(z,\omega)\leqslant 0$. Hence 
\[
\lambda_n 
=
a(z,\omega)f'_n(z,\omega)+H(z,f_n(z,\omega),\omega)
\leqslant 
H(z,0,\omega)
\leqslant
\alpha_1. 
\]
In either case, we get the claim.\medskip\\
\noindent\underline{Case $\theta_n< 0$ for all $n\in\N$.}\smallskip\\
The function $\hat f_n(x,\omega):=-f_n(-x,\omega)$ is a solution of \eqref{eq ODE lambda} with 
$\lambda:=\lambda_n$, $\hat a(x,\omega):=a(-x,\omega)$ and $\hat H(x,p,\omega):=H(-x,-p,\omega)$. The assertion follows by applying the previous step to the function $\hat f_n$.\smallskip
 
We have thus shown that the sequence $(\lambda_n)_n$ is bounded. Up to extracting a subsequence, we can assume that $\lambda_n\to \lambda$ in $[\lambda_0,+\infty)$. We conclude that $\theta\in\Theta(\lambda)$ by the upper semicontinuity of the set-valued map $\Theta(\cdot)$, see Proposition \ref{prop Theta}-(iii). 
\end{proof}

Last, we show the following result, which is the final step to establish homogenization of equation \eqref{eq:introHJ}.

\begin{prop}\label{prop D} Let us set $D:=\{\theta\in\R\,:\,\HV^L(H)(\theta)=\HV^U(H)(\theta)\}$.  Then $D=\R$.  
\end{prop}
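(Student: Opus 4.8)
The plan is to put together the machinery of Sections~\ref{sec:correctors}--\ref{sec:homogenization}. By Proposition~\ref{prop consequence existence corrector} (together with Proposition~\ref{prop PDE to ODE}), every $\theta\in\D{Im}(\Theta)$ lies in $D$: if $\theta\in\Theta(\lambda)$ and $f\in\Sol_\lambda$ has $\EE[f(0,\cdot)]=\theta$, then $u(x):=\int_0^x f$ is a viscosity solution of \eqref{eq:cellPDE} with stationary gradient, so $\HV^L(H)(\theta)=\HV^U(H)(\theta)=\lambda$. Thus $\D{Im}(\Theta)\subseteq D$, and it remains only to see that $D$ also contains each connected component of the open set $\R\setminus\D{Im}(\Theta)$. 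By Proposition~\ref{prop theta image} and Proposition~\ref{prop Theta}-(v), $\D{Im}(\Theta)$ is closed and unbounded above and below, so if $\R\setminus\D{Im}(\Theta)\neq\emptyset$ each of its components is a bounded open interval $(\theta_1,\theta_2)$ with $\theta_1,\theta_2\in\D{Im}(\Theta)$. Fix such a component.

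The key step is that $\theta_1$ and $\theta_2$ belong to one and the same $\Theta(\lambda)$. Let $\lambda_1,\lambda_2$ be the parameters, unique by Proposition~\ref{prop Theta}-(iv), with $\theta_i\in\Theta(\lambda_i)$, and pick $f_i\in\Sol_{\lambda_i}$ with $\EE[f_i(0,\cdot)]=\theta_i$; since $\theta_1<\theta_2$, Lemma~\ref{lem:order} gives $f_1<f_2$ on $\R$ almost surely (strict, since otherwise $f_1\equiv f_2$ by uniqueness for \eqref{eq2 ODE lambda}, impossible as $\lambda_1\neq\lambda_2$ would then fail). Suppose $\lambda_1\neq\lambda_2$; reflecting in $x$ if necessary --- which interchanges the two endpoints and reverses the order of the parameters, exactly as in the proof of Proposition~\ref{prop theta image} --- we may assume $\lambda_1<\lambda_2$. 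Fix any $\mu\in(\lambda_1,\lambda_2)$. Since $\lambda_1<\mu$, the function $f_1$ is a strict subsolution of \eqref{eq2 ODE lambda} with $\mu$ in place of $\lambda$, and since $\lambda_2>\mu$, the function $f_2$ is a strict supersolution of the same equation; moreover $f_1<f_2$ almost surely. Applying Lemma~\ref{lem:inbetw} pointwise in $\omega$ produces a bounded $C^1$ solution of \eqref{eq2 ODE lambda} at level $\mu$ squeezed between $f_1(\cdot,\omega)$ and $f_2(\cdot,\omega)$; the pointwise-minimal solution dominating $f_1(\cdot,\omega)$ --- jointly measurable and stationary by the argument of Lemma~\ref{lemma measurable minimal sol} --- is then an element $g\in\Sol_\mu$ with $f_1<g<f_2$ on $\R$ almost surely (strict, because a solution touching the strict sub/supersolutions $f_1$ or $f_2$ of that equation would force $\mu\in\{\lambda_1,\lambda_2\}$). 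Taking means, $\theta_1<\EE[g(0,\cdot)]<\theta_2$, so $\EE[g(0,\cdot)]\in\Theta(\mu)\cap(\theta_1,\theta_2)\subseteq\D{Im}(\Theta)\cap(\theta_1,\theta_2)=\emptyset$, a contradiction. Hence $\lambda_1=\lambda_2=:\lambda$, and we have fixed $f_1,f_2\in\Sol_\lambda$ with $\EE[f_i(0,\cdot)]=\theta_i$ and $f_1<f_2$ on $\R$ almost surely.

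I would then check that $\Solfrak(\mu;f_1,f_2)=\emptyset$ for every $\mu\in\R$. For $\mu<\lambda_0$ this holds trivially, since \eqref{eq2 ODE lambda} at level $\mu$ has no bounded solution at all. For $\mu\ge\lambda_0$, any $f\in\Solfrak(\mu;f_1,f_2)$ is a bounded $C^1$ stationary solution of \eqref{eq2 ODE lambda} at level $\mu$ with $f_1<f<f_2$ almost surely, hence $f\in\Sol_\mu$; as $f(0,\cdot)-f_1(0,\cdot)$ and $f_2(0,\cdot)-f(0,\cdot)$ are almost surely strictly positive, their means are positive, so $\EE[f(0,\cdot)]\in(\theta_1,\theta_2)$, i.e.\ $\EE[f(0,\cdot)]\in\Theta(\mu)\cap(\theta_1,\theta_2)\subseteq\D{Im}(\Theta)\cap(\theta_1,\theta_2)=\emptyset$, which is absurd.

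Finally, with $f_1<f_2$ in the same $\Sol_\lambda$ and $\Solfrak(\mu;f_1,f_2)=\emptyset$ for all $\mu\neq\lambda$ (in particular for $\mu<\lambda$ and for $\mu>\lambda$), Theorem~\ref{teo lower bound}-(i) yields $\HV^L(H)(\theta)\ge\lambda$ and part~(ii) yields $\HV^U(H)(\theta)\le\lambda$ for every $\theta\in(\theta_1,\theta_2)$. Since $\HV^L(H)\le\HV^U(H)$ by the definitions \eqref{eq:infsup}, we conclude $\HV^L(H)(\theta)=\HV^U(H)(\theta)=\lambda$ on $(\theta_1,\theta_2)$, i.e.\ $(\theta_1,\theta_2)\subseteq D$. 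Combined with $\D{Im}(\Theta)\subseteq D$ this gives $D=\R$. The only genuinely delicate point is the key step --- forcing the two endpoints of a component of $\R\setminus\D{Im}(\Theta)$ onto a common level $\lambda$; everything else is either bookkeeping or a direct appeal to results already established.
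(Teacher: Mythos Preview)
Your proof is correct and follows essentially the same strategy as the paper's: show $\D{Im}(\Theta)\subseteq D$, prove that the endpoints of any gap share a common level by inserting a stationary intermediate solution via Lemma~\ref{lem:inbetw} and the measurability/stationarity argument of Lemma~\ref{lemma measurable minimal sol} (the paper takes the pointwise sup, you the inf), and then invoke Theorem~\ref{teo lower bound}. Two small remarks: the reflection-in-$x$ reduction to $\lambda_1<\lambda_2$ is unnecessary, since Lemma~\ref{lem:inbetw} already covers both orderings via its cases (i) and (ii) (this is what the paper does); and your appeal to Lemma~\ref{lem:order} should note that its hypothesis~\eqref{eq:ord1} requires knowing the sign of $\lambda_1-\lambda_2$, so one applies it with $f_1,f_2$ possibly swapped --- the paper sidesteps this by citing Lemma~\ref{lem:ordqu} instead.
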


\begin{proof}
In view of Proposition \ref{prop consequence existence corrector}, we already know that $\D{Im}(\Theta)\subseteq D$. 
The set $A:=\R\setminus\D{Im}(\Theta)$ is an open set in view of Proposition \ref{prop theta image}. Let us assume that $A\not=\emptyset$ and let us denote by $I$ a connected component of $A$. Due to Proposition \ref{prop Theta}-(v) and the fact that $\R$ is locally connected, we infer that $I$ is a bounded open interval of the form $(\theta_1,\theta_2)$ with $\theta_i\in\Theta(\lambda_i)$ for $i\in\{1,2\}$ and $\lambda_1,\lambda_2\in [\lambda_0,+\infty)$. The fact that $I\cap \D{Im}(\Theta)=\emptyset$ implies, in view of Proposition \ref{prop consequence existence corrector}, that
\begin{equation}\label{eq null intersection}
\{f\in\Sol_\mu\,:\, \EE[f(0,\cdot)]=\theta\ \hbox{for some $\mu\geqslant \lambda_0$}\}=\emptyset
\qquad
\hbox{for every fixed $\theta\in I$.}
\end{equation}

\medskip

\noindent{\bf Claim: $\mathbf \lambda_1=\lambda_2$.}\smallskip

Let us assume the claim false. Let $f_i\in\Sol_{\lambda_i}$ such that $\EE[f(0,\cdot)]=\lambda_i$ for $i\in\{1,2\}$ with $\lambda_1\not=\lambda_2$. According to Lemma \ref{lem:ordqu} and the fact that $\theta_1<\theta_2$, we must have, almost surely, $f_1(\cdot,\omega)<f_2(\cdot,\omega)$ in $\R$. Pick a constant $\mu$ strictly in between $\lambda_1$ and $\lambda_2$. Let us denote by $\Solfrak(\mu; f_1,f_2)(\omega)$ the set of functions $f\in C^1(\R)$ which solve equation \eqref{eq ODE lambda} with $\mu$ in place of $\lambda$ and satisfy $f_1(\cdot,\omega)< f < f_2(\cdot,\omega)$ in $\R$, for every fixed $\omega\in\Omega$.  Such set is almost surely nonempty. This follows by applying Lemma \ref{lem:inbetw} with $M(\cdot)=f_2(\cdot,\omega)$, $m(\cdot):=f_1(\cdot,\omega)$ and $G:=H-\mu$. Let us set 
\[
\overline g(x,\omega):=\sup \left\{f(x,\omega)\,:\, f\in\Solfrak(\mu; f_1,f_2)(\omega)\right\},\qquad (x,\omega)\in\R\times\Omega. 
\]
By arguing as in the proofs of Lemma \ref{lemma measurable minimal sol} and Theorem \ref{teo lower bound}, we derive that $g$ is jointly measurable and stationary, and satisfies $\overline g(\cdot,\omega)\in \Solfrak(\mu; f_1,f_2)(\omega)$ almost surely. This means that $\overline g\in S_\mu$ with 
$\theta:=\EE[\overline g(0,\cdot)]\in (\theta_1,\theta_2)\cap\D{Im}(\Theta)$, in contradiction with \eqref{eq null intersection}. 

We have thus proved that $\lambda_1=\lambda_2=:\lambda$. By exploiting \eqref{eq null intersection} again, we infer, in view of Theorem \ref{teo lower bound}, that 
\begin{equation*}\label{eq2 flat part}
\lambda\leqslant \HV^L(H)(\theta)\leqslant \HV^U(H)(\theta)\leqslant \lambda
\qquad
\hbox{for every $\theta\in I$,}
\end{equation*}
i.e., $I\subset D$. 
\end{proof}

\begin{remark}\label{oss flat part}
It is worth pointing out that what we have shown above is that any connected component $I$ of $\R\setminus\D{Im}(\Theta)$ corresponds to a flat part 
of the effective Hamiltonian $\HV(H)$. 
\end{remark}

From the information gathered, we can now prove the homogenization result stated in Theorem \ref{thm:genhom}. 

\begin{proof}[Proof of Theorem \ref{thm:genhom}] In view of Proposition \ref{prop reduction}, we can assume that $H\in\Ham$ for some constants $\alpha_0,\alpha_1>0$ and $\gamma>2$. 
From Proposition \ref{prop D} we derive that, for every fixed $\theta$ we have $\HV^L(\theta)=\HV^U(\theta)$. 
We denote this common value by $\HV(H)(\theta)$. 
According to Proposition \ref{appB prop HF}, this defines a function $\HV(H):\R\to [\lambda_0,+\infty)$ which is superlinear and locally Lipschitz. Homogenization of equation \eqref{eq:introHJ} follows in view of \cite[Lemma 4.1]{DK17}.
\end{proof}

%

\appendix

\section{ODE results}\label{app:ODE}

We state here for the reader's convenience a series of results that we repeatedly used throughout the paper and which hold true for rather general continuous and coercive Hamiltonians. They had been obtained by the author for the current research and had been already shown their usefulness in the work  \cite{DKY23}. We refer the reader to \cite[Appendix A]{DKY23} for the proofs.  

We briefly recall the setting under which such results hold. We will work with a general probability space 
$(\Omega,\F, \P)$, where $\P$ and $\F$ denote the probability measure on $\Omega$  and
the $\sigma$--algebra of $\P$--measurable subsets of $\Omega$, respectively. 
We will assume that  $\P$ is invariant under the action of a one-parameter group $(\tau_x)_{x\in\R}$ of transformations $\tau_x:\Omega\to\Omega$ and that the action of $(\tau_x)_{x\in\R}$ is ergodic. No topological or completeness assumptions are made on the probability space.\smallskip

Let $a:\R\times\Omega\to [0,1]$  and $G:\R\times\R\times\Omega\to[m_0,+\infty)$ be stationary functions with respect to the shifts in the $x$-variable, were $m_0$ is real constant such that $m_0=\min_{\R\times\R} G(\cdot,\cdot,\omega)$ almost surely. 
We will assume that $a$ is continuous in the first variable and $G$ is continuous in the first two variables, for any fixed $\omega\in\Omega$.


We introduce the following conditions on $G$ and specify in each statement which of them are needed for that result:
\begin{itemize}
	\item[(G1)] \quad there exist two coercive functions
	$\alpha_G,\beta_G:[0,+\infty)\to\R$ such that
	\[
	\alpha_G\left(|p|\right)\leqslant G(p,x,\omega)\leqslant \beta_G\left(|p|\right)\quad\hbox{for every $(p,x,\omega)\in\R\times\R\times\Omega$;}
	\]
	\item[(G2)] \quad for every fixed $R>0$, there exists a constant $C_R>0$ such that 
	\[
	|G(p,x,\omega)-G(q,x,\omega)|\leqslant C_R |p-q|\quad\hbox{for every $p,q\in [-R,R]$ and $(x,\omega)\in\R\times\Omega$}.
	\] 
\end{itemize}

For our first statement, we will need the following notation: for each $\lambda\geqslant 0$,
\begin{align}\label{eq:pla} 
	p_\lambda^-:=\inf_{x\in\R}\inf\{p\in\R:\ G(p,x,\omega)\leqslant \lambda\} \quad\text{and}\quad p^+_\lambda:=\sup_{x\in\R}\sup\{p\in\R:\ G(p,x,\omega)\leqslant \lambda\}.
\end{align}
By the ergodicity assumption and (G1), the quantities $p^\pm_\lambda$ are a.s.\ constants. The functions $\lambda\mapsto p^-_\lambda$ and $\lambda\mapsto p^+_\lambda$ are, respectively, non-increasing and non-decreasing (and, in general, not continuous). Furthermore, $p^-_\lambda\to-\infty$ and $p^+_\lambda\to +\infty$ as $\lambda\to+\infty$.

\begin{lemma}\label{lem:ubounds}
	Assume that $G$ satisfies {\em (G1)}. Take any $\lambda>m_0$. Let $f(x,\omega)$ be a stationary function such that, for all $\omega\in\Omega$, $f(\,\cdot\,,\omega)\in\CC^1(\R)$, and
	\begin{equation*}\label{eq:lesslam}
		a(x,\omega)f'(x,\omega)+G(f(x,\omega),x,\omega)< \lambda\quad\forall x\in\R. 
	\end{equation*}
	Then, on a set $\Omega_f$ of probability 1, $f(x,\omega)\in (p^-_\lambda,p^+_\lambda)$ for all $x\in\R$, where $p^\pm_\lambda$ are defined in \eqref{eq:pla}.
\end{lemma}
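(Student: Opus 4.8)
The statement asserts an a.s.\ uniform two-sided bound $p^-_\lambda < f(x,\omega) < p^+_\lambda$ for a stationary $C^1$ function satisfying the strict ODE inequality $a f' + G(f,x,\omega) < \lambda$. The natural strategy is to prove the two inequalities separately, and by the left-right symmetry $x\mapsto -x$ (which swaps the roles of $p^-_\lambda$ and $p^+_\lambda$ and turns $f'$ into $-f'$, compensated by replacing $a(x,\omega),G(p,x,\omega)$ with $a(-x,\omega),G(-p,x,\cdot)$), it suffices to establish the upper bound $f(x,\omega) < p^+_\lambda$. First I would recall that $p^+_\lambda$ is an a.s.\ constant (by ergodicity and (G1)), and that by definition of $p^+_\lambda$ one has $G(p,x,\omega) > \lambda$ for every $p > p^+_\lambda$, every $x\in\R$, on a set $\Omega'$ of probability $1$; this is the key structural fact about $p^+_\lambda$ that the argument will lean on.

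The core of the argument is an ODE/stationarity contradiction. Fix $\omega$ in the full-probability set where $f(\cdot,\omega)\in C^1(\R)$, the inequality $a f' + G(f,x,\omega)<\lambda$ holds for all $x$, and the defining property of $p^+_\lambda$ is valid. Suppose for contradiction that $f(x_0,\omega) \geq p^+_\lambda$ for some $x_0$ (I'll handle the borderline case $=$ separately or absorb it, see below). The plan is to show $f$ can never exceed $p^+_\lambda$ and moreover cannot even touch it: at any point $x$ where $f(x,\omega) \geq p^+_\lambda$, the ODE inequality gives $a(x,\omega) f'(x,\omega) < \lambda - G(f(x,\omega),x,\omega) \leq 0$ whenever $G(f(x,\omega),x,\omega) \geq \lambda$. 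Since $a \in [0,1]$, this forces $f'(x,\omega) < 0$ at such points (when $a(x,\omega)>0$; when $a(x,\omega)=0$ the inequality $G(f,x,\omega)<\lambda$ is directly violated). Thus once $f$ reaches the level $p^+_\lambda$ it is strictly decreasing there, so it immediately drops below and — by the same reasoning applied at the level $p^+_\lambda$ — can never return. More carefully: let $E = \{x : f(x,\omega) \geq p^+_\lambda\}$; the above shows $f' < 0$ on $E$, so $E$ is an interval unbounded to the left (a set on which $f$ is decreasing and stays $\geq p^+_\lambda$ can only be a left-unbounded interval), i.e.\ $f(x,\omega) \geq p^+_\lambda$ for all $x \leq \sup E$ with $f$ strictly decreasing there; but then $f(x,\omega) \to +\infty$ as $x \to -\infty$, contradicting the essential boundedness of the stationary function $f$ (by Birkhoff / stationarity, $\frac1n\int_{-n}^0 f(s,\omega)\,ds \to \EE[f(0,\cdot)]$ is finite a.s., which is incompatible with $f(x,\omega)\to+\infty$). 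Hence $E$ is empty and $f(x,\omega) < p^+_\lambda$ for all $x$.

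Actually a cleaner route avoids even discussing monotonicity on intervals: on the full-probability set above, I claim $\sup_{x\in\R} f(x,\omega) < p^+_\lambda$. If not, pick $x_k$ with $f(x_k,\omega) \to \sup_x f(x,\omega) =: s \geq p^+_\lambda$. By stationarity $s$ is a.s.\ constant and finite (essential boundedness). Using the group action, translate to points where $f$ is close to its supremum; at a near-maximum point the derivative is small, so the ODE inequality forces $G(f, x, \omega) < \lambda + o(1)$ with $f$ near $s \geq p^+_\lambda$, contradicting $G(p,x,\omega) > \lambda$ for $p > p^+_\lambda$ (and handling $f$ exactly at $p^+_\lambda$ requires a small perturbation, taking a slightly larger level or using the strictness in the hypothesis). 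The cleanest implementation is probably the \emph{stationary extremum} tool: by an approximate-maximum argument for stationary functions (as in Lemma~\ref{lem:stat}-type results), there is a.s.\ a point $y$ with $f(y,\omega)$ within $\eps$ of $\esssup f$ and $|f'(y,\omega)|$ arbitrarily small; plug into $a f' + G(f,\cdot,\cdot) < \lambda$ to get $G(f(y,\omega),y,\omega) < \lambda + \eps$, then let $\eps\to0$ to conclude $\esssup_x f(x,\omega) \leq p^+_\lambda$, and finally upgrade $\leq$ to $<$ using that the hypothesis is a \emph{strict} inequality (if $f$ achieved $p^+_\lambda$ we could reach a contradiction with the strict sign, since $G(p^+_\lambda,\cdot,\cdot)\geq\lambda$ would force $af'<0$ everywhere $f=p^+_\lambda$).

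\textbf{Main obstacle.} The delicate point is handling the \emph{endpoint} case $f = p^+_\lambda$ exactly, since the defining property only gives $G(p,x,\omega) > \lambda$ strictly for $p > p^+_\lambda$, whereas at $p = p^+_\lambda$ one only has $G(p^+_\lambda,x,\omega) \geq \lambda$ (with possible equality), and $G$ need not be continuous in a way that rules out $f$ sitting at the level $p^+_\lambda$ on a nontrivial set. The resolution is to exploit the strictness built into the hypothesis ($<\lambda$, not $\leq\lambda$): wherever $f(x,\omega) = p^+_\lambda$ we get $a(x,\omega)f'(x,\omega) < \lambda - G(p^+_\lambda,x,\omega) \leq 0$, which is incompatible with $f$ having a maximum (or even a plateau) at that level — a maximum point would need $f'=0$ there if $a>0$, and if $a=0$ we directly get $G(p^+_\lambda,x,\omega)<\lambda$, then push $f$ slightly above by continuity and contradict the strict lower bound on $G$. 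The rest (measurability of the exceptional null set, invoking ergodicity for a.s.-constancy of $p^\pm_\lambda$, and the symmetry reduction to the one-sided bound) is routine bookkeeping, with the stationary approximate-extremum lemma from Appendix~\ref{app:ODE} doing the real work.
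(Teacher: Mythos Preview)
The paper does not actually prove this lemma; it defers all proofs in Appendix~\ref{app:ODE} to \cite[Appendix~A]{DKY23}. So there is no in-paper argument to compare against, and I assess your proposal on its own. Your overall strategy---monotonicity on $\{f>p^+_\lambda\}$, a stationarity contradiction, then an endpoint argument for the strict inequality---is the right shape, but two steps are not correct as written.

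First, the assertion ``at $p=p^+_\lambda$ one only has $G(p^+_\lambda,x,\omega)\geqslant\lambda$'' is false: $p^+_\lambda$ is a supremum over all $x$, so for a particular $x$ one may well have $G(p^+_\lambda,x,\omega)<\lambda$ (whenever $\sup\{p:G(p,x,\omega)\le\lambda\}<p^+_\lambda$). Hence your chain ``wherever $f=p^+_\lambda$ we get $af'<\lambda-G(p^+_\lambda,x,\omega)\le0$'' breaks. The repair is to run the monotonicity argument on the \emph{open} set $\{f>p^+_\lambda\}$ (where $G(f,x,\omega)>\lambda$ is guaranteed) to obtain $f\le p^+_\lambda$ everywhere first; then any point with $f(x_0,\omega)=p^+_\lambda$ is a global maximum, so $f'(x_0,\omega)=0$, and the strict ODE inequality gives $G(p^+_\lambda,x_0,\omega)<\lambda$; by continuity of $G$ in $p$ one gets $G(p^+_\lambda+\varepsilon,x_0,\omega)<\lambda$ for some $\varepsilon>0$, contradicting the definition of $p^+_\lambda$. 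You mention this ``push slightly above'' idea only in the $a=0$ sub-case, but it is the correct argument in every case. Second, from ``$f'<0$ on a left half-line'' you cannot infer $f\to+\infty$ at $-\infty$ (a finite limit is possible), and your appeal to ``essential boundedness'' and Birkhoff is unjustified: the lemma does not assume $f\in L^\infty$ or $f(0,\cdot)\in L^1(\Omega)$. A clean, integrability-free contradiction is this: once you know $\{f>p^+_\lambda\}$ is a.s.\ a left half-line $(-\infty,\beta(\omega))$, stationarity makes $\P(f(z,\omega)>p^+_\lambda)$ independent of $z$; letting $z\to\pm\infty$ forces $\beta=+\infty$ a.s., so $f$ is strictly decreasing on all of $\R$; but then $f(0,\cdot)$ and $f(1,\cdot)$ are identically distributed with $f(0,\cdot)>f(1,\cdot)$ a.s., which is impossible. (Alternatively, invoke Lemma~\ref{lem:stat}: a nonconstant stationary $f$ has local maxima in every left half-line, which is incompatible with $f'<0$ there.)
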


\begin{lemma}\label{lem:stat}
	Let $f(x,\omega)$ be a stationary function such that $f(\,\cdot\,,\omega) \in\CC(\R)$ for every $\omega\in\Omega$. Then, we have the following dichotomy:
	\begin{itemize}
		\item [(i)] $\P(f(x,\omega) = c\ \forall x\in\R) = 1$ for some constant $c\in\R$;
	    \item [(ii)] for $\P$-a.e.\ $\omega$, $f(\,\cdot\,,\omega)$ has infinitely many local maxima and minima.
	\end{itemize}
\end{lemma}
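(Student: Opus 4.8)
\textbf{Proof proposal for Lemma \ref{lem:stat}.}

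The plan is to prove the dichotomy by showing that if alternative (ii) fails on a set of positive probability, then alternative (i) must hold. First I would observe that the event
\[
E := \{\omega\in\Omega : f(\,\cdot\,,\omega)\ \text{has only finitely many local maxima and minima}\}
\]
is measurable (it can be written in terms of countably many conditions on $f$ restricted to rational intervals, using that $f(\,\cdot\,,\omega)$ is continuous) and, crucially, is \emph{invariant} under the shift: since $f$ is stationary, $f(\,\cdot\,,\tau_z\omega) = f(\,\cdot\,+z,\omega)$, and translating a function does not change the number of its local extrema, so $\tau_z(E) = E$ for every $z\in\R$. By ergodicity, $\P(E)\in\{0,1\}$. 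If $\P(E)=0$ we are in case (ii) and there is nothing more to prove, so assume $\P(E)=1$.

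Next I would analyze the structure of a function with only finitely many local extrema. For $\omega\in E$, the real line decomposes into finitely many maximal intervals on each of which $f(\,\cdot\,,\omega)$ is monotone; in particular there is a leftmost such interval $(-\infty, b(\omega)]$ and a rightmost one $[c(\omega),+\infty)$ on which $f(\,\cdot\,,\omega)$ is monotone (weakly), and I would record the monotonicity \emph{type} (non-increasing or non-decreasing) on the two unbounded tails. The key point is to rule out strict monotonicity on a tail. Consider, say, the random variable
\[
\ell(\omega) := \liminf_{x\to+\infty} f(x,\omega),
\]
which is a measurable function of $\omega$; stationarity gives $f(x,\tau_z\omega)=f(x+z,\omega)$, hence $\ell(\tau_z\omega) = \liminf_{x\to+\infty} f(x+z,\omega) = \ell(\omega)$ for every $z$, so by ergodicity $\ell$ is a.s. equal to a constant $L\in[-\infty,+\infty]$. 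One then argues similarly for $\limsup_{x\to+\infty}$, $\liminf_{x\to-\infty}$ and $\limsup_{x\to-\infty}$, getting a.s.-constant values. Using the finite-extrema structure, on $E$ each of these four quantities equals the limit of $f$ along the corresponding monotone tail, so in fact the limits $\lim_{x\to\pm\infty} f(x,\omega)$ exist and are a.s. equal to constants $L_\pm\in[-\infty,+\infty]$.

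The main obstacle, and the heart of the argument, is to conclude from this that $f$ must be a.s. constant, i.e. to exclude the genuinely monotone (or tent-shaped) profiles. Here I would invoke the recurrence forced by measure-preservation. The cleanest route: fix a large $N$ and consider the level set $C_N(\omega) := \{x\in\R : f(x,\omega) \ge f(0,\omega) + 1/N\}$ when $L_+ > f(0,\omega)$ (and symmetrically); stationarity makes $\{x : f(x,\tau_x\omega)\in\,\cdot\,\}$-type sets stationary random closed sets, and a stationary random closed set that is a.s. nonempty must a.s. be unbounded above and below (cf. the discussion around Propositions 3.2 and 3.5 in \cite{DS09}, already used in this paper in the proof of Proposition \ref{prop theta image}). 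Thus if $f(0,\omega) < L_+ $ with positive probability, then $f$ exceeds $f(0,\omega)+1/N$ at arbitrarily large \emph{and} arbitrarily negative $x$; combined with continuity and the existence of the limits $L_\pm$, one deduces that $f(0,\omega)$ cannot be a strict minimum-type value along a tail, and pushing this through forces $L_+ = L_- = f(x,\omega)$ for a.e. $x$ — equivalently $f(\,\cdot\,,\omega)$ is constant a.s. Since $f$ is stationary, that constant is a.s. the same value $c$, and we land in case (i). I expect the delicate bookkeeping to be in making the "stationary nonempty closed set $\Rightarrow$ unbounded both ways" step interact correctly with the possibly infinite values $L_\pm$; the degenerate cases $L_+ = \pm\infty$ need to be handled separately, but they are easy — monotone divergence to $\pm\infty$ on a tail together with a stationary level set again yields a contradiction with recurrence.
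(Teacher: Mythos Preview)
The paper does not actually supply its own proof of this lemma: Appendix~\ref{app:ODE} explicitly refers the reader to \cite[Appendix~A]{DKY23} for the proofs of all the ODE lemmas, Lemma~\ref{lem:stat} included. So there is no in-paper argument to compare against, only the external reference.

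Your overall strategy is the natural one and is correct in spirit: the event $E$ of having only finitely many local extrema is shift-invariant, ergodicity gives $\P(E)\in\{0,1\}$, and on $\{\P(E)=1\}$ the eventually-monotone tail structure combined with the ``nonempty stationary random closed set is unbounded on both sides'' principle forces $f$ to be a.s.\ constant.

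There is, however, a genuine gap in the execution. Your set
\[
C_N(\omega)=\{x\in\R:\ f(x,\omega)\ge f(0,\omega)+1/N\}
\]
is \emph{not} a stationary random closed set: one computes
$C_N(\tau_z\omega)=\{x:\ f(x+z,\omega)\ge f(z,\omega)+1/N\}$, whereas
$C_N(\omega)-z=\{x:\ f(x+z,\omega)\ge f(0,\omega)+1/N\}$, and these differ unless $f(z,\omega)=f(0,\omega)$. So the recurrence result from \cite{DS09} does not apply to $C_N$ as written, and this is exactly the step you flag as ``delicate bookkeeping''. The fix is simple and makes the whole argument clean: work instead with a \emph{deterministic} threshold. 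Set $M:=\esssup_\omega \sup_x f(x,\omega)$ and $m:=\essinf_\omega \inf_x f(x,\omega)$; by ergodicity $\sup_x f(x,\omega)=M$ and $\inf_x f(x,\omega)=m$ almost surely. If $m<M$, pick $m<s'<s<M$ and consider the stationary random closed sets $\{x:\ f(x,\omega)\ge s\}$ and $\{x:\ f(x,\omega)\le s'\}$; both are a.s.\ nonempty, hence a.s.\ unbounded on both sides by \cite[Propositions~3.2 and~3.5]{DS09}, which forces infinitely many crossings and thus infinitely many local extrema, contradicting $\P(E)=1$. Hence $m=M$ and $f$ is a.s.\ constant. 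With this correction your argument goes through.
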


As a consequence of Lemma \ref{lem:ubounds}, we infer
\begin{cor}\label{cor:lelam}
	Assume that $G$ satisfies {\em (G1)}. Take any $\lambda\geqslant m_0$. Let $f(x,\omega)$ be a stationary function such that, for all $\omega\in\Omega$, $f(\,\cdot\,,\omega)\in\CC^1(\R)$, and
	\[ a(x,\omega)f'(x,\omega)+G(f(x,\omega),x,\omega)\leqslant \lambda\quad\forall x\in\R. \]
	Then, on a set $\Omega_f$ of probability 1,
	$\displaystyle f(x,\omega)\in[\sup_{\mu>\lambda}p^-_\mu,\inf_{\mu>\lambda}p^+_\mu]$ for all $x\in\R$.
\end{cor}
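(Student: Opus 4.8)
The statement to prove is Corollary~\ref{cor:lelam}, which we want to deduce from Lemma~\ref{lem:ubounds}. The plan is a standard limiting argument: a function satisfying the non-strict inequality at level $\lambda$ is, in particular, a strict subsolution at every level $\mu>\lambda$, so Lemma~\ref{lem:ubounds} applies at each such $\mu$ and gives the bound $f(x,\omega)\in(p^-_\mu,p^+_\mu)$ on a full-probability set $\Omega_{f,\mu}$; one then intersects over a countable sequence $\mu_n\downarrow\lambda$ to obtain a single full-probability set on which all the bounds hold simultaneously, and passes to the limit.

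\textbf{Key steps, in order.} First I would observe that if
\[
a(x,\omega)f'(x,\omega)+G(f(x,\omega),x,\omega)\leqslant \lambda\qquad\text{for all }x\in\R,
\]
then for every $\mu>\lambda$ we trivially have the strict inequality
\[
a(x,\omega)f'(x,\omega)+G(f(x,\omega),x,\omega)< \mu\qquad\text{for all }x\in\R,
\]
so the hypotheses of Lemma~\ref{lem:ubounds} are met with $\mu$ in place of $\lambda$ (note $\mu>\lambda\geqslant m_0$, so $\mu>m_0$ as required). Hence there is a set $\Omega_{f,\mu}$ of probability $1$ such that $f(x,\omega)\in(p^-_\mu,p^+_\mu)$ for all $x\in\R$ and all $\omega\in\Omega_{f,\mu}$. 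Second, I would fix a sequence $\mu_n\downarrow\lambda$ (for instance $\mu_n=\lambda+1/n$) and set $\Omega_f:=\bigcap_{n\in\N}\Omega_{f,\mu_n}$, which still has probability $1$ as a countable intersection of full-probability sets. Third, for $\omega\in\Omega_f$ and $x\in\R$ we have $f(x,\omega)\in(p^-_{\mu_n},p^+_{\mu_n})\subseteq(p^-_{\mu_n},p^+_{\mu_n}]$ for every $n$, hence
\[
p^-_{\mu_n}\leqslant f(x,\omega)\leqslant p^+_{\mu_n}\qquad\text{for all }n\in\N.
\]
Fourth, I would take the supremum over $n$ on the left and the infimum over $n$ on the right. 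Since $\mu\mapsto p^-_\mu$ is non-increasing, $\sup_n p^-_{\mu_n}=\sup_{\mu>\lambda}p^-_\mu$ (the sup over the decreasing sequence realizes the sup over all $\mu>\lambda$ by monotonicity and cofinality of $\{\mu_n\}$ in $(\lambda,+\infty)$ from above); symmetrically $\inf_n p^+_{\mu_n}=\inf_{\mu>\lambda}p^+_\mu$. This yields $f(x,\omega)\in[\sup_{\mu>\lambda}p^-_\mu,\inf_{\mu>\lambda}p^+_\mu]$ for all $x\in\R$, which is exactly the claim.

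\textbf{Main obstacle.} There is no genuine obstacle here — the corollary is a soft consequence of the lemma. The only point requiring a line of care is the interchange of $\sup_n$/$\inf_n$ over the particular sequence $\mu_n$ with the $\sup$/$\inf$ over the full set $\{\mu:\mu>\lambda\}$; this is immediate from the stated monotonicity of $\lambda\mapsto p^\pm_\lambda$ together with the fact that $\mu_n\downarrow\lambda$, so that $\{\mu_n\}$ is cofinal in $(\lambda,+\infty)$ in the relevant order. One should also record that the $p^\pm_\mu$ are a.s.\ constants (as noted in the excerpt, by ergodicity and (G1)), so the intersection set $\Omega_f$ can be chosen to also lie inside the full-probability set where this holds, making the final inclusion a statement about deterministic constants.
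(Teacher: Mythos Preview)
Your proposal is correct and follows exactly the approach the paper indicates: the corollary is stated immediately after ``As a consequence of Lemma~\ref{lem:ubounds}, we infer'', and your argument---applying Lemma~\ref{lem:ubounds} at each level $\mu>\lambda$, intersecting over a countable sequence $\mu_n\downarrow\lambda$, and using the monotonicity of $\mu\mapsto p^\pm_\mu$---is precisely the intended deduction.
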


The next result generalizes the fact that, under suitable conditions, two distinct solutions of an ODE do not touch each other.

\begin{lemma}\label{lem:order}
	Assume that $H$ satisfies {\em (H2)}, and $a(x,\omega) > 0$ for all $(x,\omega)\in\R\times\Omega$. Let $f_1(x,\omega)$ and $f_2(x,\omega)$ be stationary processes such that, for all $\omega\in\Omega$, $f_1(\,\cdot\,,\omega),f_2(\,\cdot\,,\omega)\in\CC^1(\R)\cap \CC_b(\R)$, and
	\begin{equation}\label{eq:ord1}
		a(x,\omega)f_1'(x,\omega)+H(f_1(x,\omega),x,\omega) \leqslant a(x,\omega)f_2'(x,\omega)+H(f_2(x,\omega),x,\omega)\quad \forall x\in\R.
	\end{equation}
	Then, one of the following events has probability $1$:
	\begin{align*}
		\Omega_0\, &= \{\omega\in\Omega:\, (f_1-f_2)(x,\omega) = 0\ \text{for all}\ x\in\R \};\\
		\Omega_- &= \{\omega\in\Omega:\, (f_1-f_2)(x,\omega) < 0\ \text{for all}\ x\in\R \};\\
		\Omega_+ &= \{\omega\in\Omega:\, (f_1-f_2)(x,\omega) > 0\ \text{for all}\ x\in\R \}.
	\end{align*}
\end{lemma}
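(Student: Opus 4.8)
The plan is to exploit the strong maximum principle for the linear second-order ODE obtained by subtracting the two inequalities in \eqref{eq:ord1}. First I would fix $\omega$ in a set of full probability where $f_1(\cdot,\omega)$ and $f_2(\cdot,\omega)$ are bounded $C^1$ solutions of the relevant inequalities and where $a(\cdot,\omega)>0$ everywhere, and set $h(\cdot):=(f_1-f_2)(\cdot,\omega)$. Rewriting \eqref{eq:ord1} as
\[
a(x,\omega)\bigl(f_1'(x,\omega)-f_2'(x,\omega)\bigr)\leqslant H(f_2(x,\omega),x,\omega)-H(f_1(x,\omega),x,\omega),
\]
and using the local Lipschitz bound (H2) together with the boundedness of $f_1,f_2$, one finds a locally bounded measurable function $c(x)$ (depending on $\omega$) such that $h$ satisfies the differential inequality $a(x,\omega)h'(x)\leqslant c(x)\,h(x)$ on $\R$; more precisely, $a(x,\omega)h'(x)+c(x)h(x)\leqslant 0$ wherever we interpret the quotient $\bigl(H(f_1,x,\omega)-H(f_2,x,\omega)\bigr)/(f_1-f_2)$ appropriately (it extends to a locally bounded function thanks to (H2), the $C^1$ regularity of the $f_i$, and $a>0$). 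The key consequence is a Gronwall-type propagation: if $h(x_0)\leqslant 0$ for some $x_0$, then $h(x)\leqslant 0$ for all $x\geqslant x_0$; if $h(x_0)\geqslant 0$, then $h(x)\geqslant 0$ for all $x\leqslant x_0$. Hence the sign of $h$ can change at most once as $x$ increases, and only from positive to nonpositive; combined with the analogous statement read from the right, one deduces that $h$ does not change sign at all, i.e., for every such $\omega$ exactly one of $h>0$ on $\R$, $h<0$ on $\R$, or $h\equiv 0$ on $\R$ holds.

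Having established this trichotomy pointwise in $\omega$, the next step is to upgrade it to the stated almost-sure dichotomy among the three stationary events $\Omega_0,\Omega_-,\Omega_+$. Since $f_1-f_2$ is a stationary process and each of these events is shift-invariant, the ergodicity assumption forces each of $\P(\Omega_0),\P(\Omega_-),\P(\Omega_+)$ to be $0$ or $1$; as they partition a set of full measure (by the deterministic trichotomy just proved), exactly one of them has probability $1$. This is the desired conclusion.

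The main obstacle I anticipate is justifying the differential-inequality reformulation rigorously: one must check that the "discrete" coefficient obtained by dividing the Hamiltonian difference by $f_1-f_2$ is a genuinely locally bounded function even where $f_1-f_2$ vanishes, so that the comparison/Gronwall argument applies on all of $\R$ rather than just on intervals where $f_1\neq f_2$. Here (H2) is exactly what is needed: $|H(f_1,x,\omega)-H(f_2,x,\omega)|\leqslant\alpha_1(|f_1|+|f_2|+1)^{\gamma-1}|f_1-f_2|$, and the prefactor is locally bounded because $f_1,f_2\in\CC_b(\R)$. Dividing by $a(x,\omega)>0$ (which is locally bounded below away from $0$ on each compact interval once $\omega$ is fixed, by continuity of $a(\cdot,\omega)$) then yields the locally bounded coefficient $c(\cdot)/a(\cdot,\omega)$, and the sign-propagation follows from the standard integrating-factor estimate. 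The remaining bookkeeping—stationarity of $f_1-f_2$, shift-invariance of the three events, and the ergodic $0$–$1$ law—is routine.
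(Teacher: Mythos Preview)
Your Gronwall/integrating-factor reduction is the right starting point and it does show that, for each fixed $\omega$, the product $\mu h$ with $\mu(x)=\exp\bigl(\int_0^x c/a\bigr)$ is non-increasing on $\R$. But the conclusion you draw---that \emph{for each} $\omega$ one of $h>0$, $h<0$, $h\equiv 0$ must hold---does not follow, and is false at the deterministic level. All the ODE argument yields is the sign pattern $+\,|\,0\,|\,-$ (each block possibly empty): nothing rules out $h(\cdot,\omega)>0$ on $(-\infty,x_*)$ and $h(\cdot,\omega)\leqslant 0$ on $[x_*,\infty)$ for some finite $x_*(\omega)$. Your ``analogous statement read from the right'' adds nothing: the inequality \eqref{eq:ord1} is one-sided, so no symmetric statement is available, and backward propagation of nonnegativity is the same information as forward propagation of nonpositivity. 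A concrete deterministic counterexample: $a\equiv 1$, $H\equiv 0$ (trivially satisfying (H2)), $f_1(x)=-\tanh x$, $f_2\equiv 0$; then $h=-\tanh x$ changes sign at $0$.

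What is missing is a second, genuinely probabilistic, step: stationarity must be used to \emph{exclude} the transitional pattern, not merely to decide among $\Omega_0,\Omega_-,\Omega_+$ at the end. One clean way is this. The event $\Omega_{\mathrm{bad}}$ that $h(\cdot,\omega)$ has a zero but is not identically zero is shift-invariant, hence has probability $0$ or $1$ by ergodicity. On $\Omega_{\mathrm{bad}}$, the zero set $Z(\omega)$ of $h(\cdot,\omega)$ is a nonempty closed interval strictly contained in $\R$ (it is an interval because $\mu h$ is monotone and $\mu>0$), so at least one of its endpoints is finite; splitting $\Omega_{\mathrm{bad}}$ into the shift-invariant subevents $\{\inf Z>-\infty\}$ and $\{\sup Z<+\infty\}$, one of them has full measure in $\Omega_{\mathrm{bad}}$. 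On that subevent the finite endpoint $y_*(\omega)$ satisfies $y_*(\tau_z\omega)=y_*(\omega)-z$ by stationarity of $h$, so the law of $y_*$ would be invariant under all real translations---impossible for a probability distribution on $\R$ (e.g.\ $\P(y_*\in[n,n+1))$ would be independent of $n\in\Z$ and sum to $1$). Hence $\P(\Omega_{\mathrm{bad}})=0$, after which your final ergodicity step correctly finishes the proof.
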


The next result shows that two distinct solutions of an ODE do not touch each other and gives a quantitative estimate of the distance between them.
%
%


\begin{lemma}\label{lem:ordqu}
	Assume that $G$ satisfies {\em (G1)} and {\em (G2)}. Take any $\lambda_1,\lambda_2\in\R$ such that $m_0\leqslant\lambda_1 < \lambda_2$. Let $f_1(x,\omega)$ and $f_2(x,\omega)$ be stationary functions such that, for all $i\in\{1,2\}$ and $\omega\in\Omega$, $f_i(\,\cdot\,,\omega)\in\CC^1(\R)$, and
	\begin{equation*}
		a(x,\omega)f_i'(x,\omega) + G(f_i(x,\omega),x,\omega) = \lambda_i\quad\forall x\in\R.
	\end{equation*}
	Then, there is a constant $\delta>0$, which depends only on $\lambda_2$ and $G$, such that
	\[\P((f_1-f_2)(x,\omega)>\delta(\lambda_2-\lambda_1)\ \forall x\in\R)=1\ \ \text{or}\ \ \P((f_2-f_1)(x,\omega)>\delta(\lambda_2-\lambda_1)\ \forall x\in\R)= 1.\]
\end{lemma}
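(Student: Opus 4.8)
The plan is to reduce the assertion to an elementary ODE comparison that pins down the sign of $f_1-f_2$, then to derive a one-sided differential inequality for the gap $h:=f_2-f_1$, and finally to convert the fact that $h$ cannot be too small at any point into a genuine uniform lower bound by an ergodicity argument applied to the sublevel set $\{h<c^\ast\}$ for a suitable \emph{deterministic} threshold $c^\ast$.

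First I would fix the constants. Since $m_0\le\lambda_1<\lambda_2$, Corollary \ref{cor:lelam} yields a constant $R=R(\lambda_2,G)$ with $\|f_i(\,\cdot\,,\omega)\|_\infty\le R$ almost surely for $i=1,2$; I let $C_R$ be the Lipschitz constant of $G$ on $[-R,R]$ provided by (G2) and set $c^\ast:=(\lambda_2-\lambda_1)/C_R$ and $\delta:=1/(2C_R)$, so that $\delta$ depends only on $\lambda_2$ and $G$ and $c^\ast=2\delta(\lambda_2-\lambda_1)$. Because $a f_1'+G(f_1,\,\cdot\,,\omega)=\lambda_1<\lambda_2=a f_2'+G(f_2,\,\cdot\,,\omega)$ pointwise and both $f_i(\,\cdot\,,\omega)$ are bounded, Lemma \ref{lem:order} applies and one of $\Omega_0,\Omega_-,\Omega_+$ has probability $1$. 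The event $\Omega_0=\{f_1\equiv f_2\}$ is impossible, since on it we would get $\lambda_1=\lambda_2$. I may therefore assume $\P(\Omega_-)=1$, so $h:=f_2-f_1>0$ on $\R$ almost surely, and it suffices to prove $h>\delta(\lambda_2-\lambda_1)$ on $\R$ almost surely; the case $\P(\Omega_+)=1$ is handled by the mirror argument (work with $f_1-f_2$, replacing ``increasing'' by ``decreasing'' and ``left half-line'' by ``right half-line'' throughout), which yields the other alternative of the statement.

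Next I would subtract the two equations to get $a(x,\omega)h'(x,\omega)=(\lambda_2-\lambda_1)-\big(G(f_2,x,\omega)-G(f_1,x,\omega)\big)$ almost surely, whence, using (G2) and the bound $R$, $a(x,\omega)h'(x,\omega)\ge(\lambda_2-\lambda_1)-C_R\,h(x,\omega)$ for all $x$. At any point where $h<c^\ast$ the right-hand side is strictly positive, so (as $a\ge0$) necessarily $a>0$ and $h'>0$ there; hence $h(\,\cdot\,,\omega)$ is strictly increasing on each connected component of the open set $U(\omega):=\{x\in\R:h(x,\omega)<c^\ast\}$. I would then observe that $U(\omega)$ must be empty, all of $\R$, or a half-line $(-\infty,b(\omega))$ with $b(\omega)\in\R$: a component with finite left endpoint $\alpha$ is impossible, since $h(\alpha)\ge c^\ast$ together with strict monotonicity forces $h>c^\ast$ just to the right of $\alpha$, contradicting membership in $U(\omega)$; so every component is a left half-line and there is at most one. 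The alternative $U(\omega)=\R$ on a positive- (hence, by invariance, full-) probability set is excluded too: then $h(\,\cdot\,,\omega)$ would be bounded and strictly increasing almost surely, while stationarity gives $\EE[h(y,\cdot)]=\EE[h(0,\cdot)]$ for every $y$ and pointwise monotonicity gives $\EE[h(y,\cdot)]>\EE[h(0,\cdot)]$ for $y>0$ — a contradiction (one may instead invoke Lemma \ref{lem:stat}).

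Finally, to rule out $U(\omega)=(-\infty,b(\omega))$: if $U(\omega)\ne\emptyset$ on a set of positive probability then, since $U(\tau_y\omega)=U(\omega)-y$, this event is invariant, hence has probability $1$, and so $U(\omega)=(-\infty,b(\omega))$ almost surely with $b(\omega)=\sup\{q\in\Q:h(q,\omega)<c^\ast\}$ measurable and real-valued; but $b(\tau_y\omega)=b(\omega)-y$ makes the law of $b$ under $\P$ invariant under every translation of $\R$, which no Borel probability measure on $\R$ enjoys. Therefore $U(\omega)=\emptyset$ almost surely, i.e. $h\ge c^\ast=2\delta(\lambda_2-\lambda_1)>\delta(\lambda_2-\lambda_1)$ on $\R$ almost surely, which is the claim. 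The main obstacle is exactly this last step: one must recognize that the one-sided ODE inequality together with the strict positivity of $h$ confines the sublevel set $\{h<c^\ast\}$ to a half-line, and that its (measurable) endpoint would carry a translation-invariant law — everything else is routine ODE and measure-theoretic bookkeeping.
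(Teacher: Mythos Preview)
Your proof is correct. The paper does not contain its own proof of this lemma---it defers to \cite[Appendix~A]{DKY23}---so a line-by-line comparison is not possible, but the route you take is the natural one and almost certainly coincides with the intended argument: reduce via Lemma~\ref{lem:order} to the case $h:=f_2-f_1>0$ almost surely, subtract the two ODEs and use (G2) to obtain $a h'\geqslant (\lambda_2-\lambda_1)-C_R h$, observe that on $\{h<c^\ast\}$ one has $a>0$ and $h'>0$, classify the sublevel set as empty, all of $\R$, or a single left half-line, and eliminate the last two by stationarity (the monotone case) and by the nonexistence of a translation-invariant Borel probability on $\R$ (the half-line case). The computation of $\delta=1/(2C_R)$ with $R=R(\lambda_2,G)$ coming from Corollary~\ref{cor:lelam} is exactly what the statement calls for.

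One minor remark: your appeal to Lemma~\ref{lem:order} uses the hypothesis $a(\,\cdot\,,\omega)>0$ on $\R$, which is listed for that lemma but not for the present one. This is harmless in the body of the paper, where (A1) is in force, and in any case your own half-line/translation-equivariance device applied instead to the (at most one) zero of $h$---where $a(x_0)h'(x_0)=\lambda_2-\lambda_1>0$ forces $a(x_0)>0$ and $h'(x_0)>0$---recovers the sign trichotomy without invoking Lemma~\ref{lem:order} at all.
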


The next statement is deterministic. One should think of $a(\,\cdot\,)$ and $G(\,\cdot\,,\,\cdot\,)$ as $a(\,\cdot\,,\omega)$ and $G(\,\cdot\,,\,\cdot\,,\omega)$ with $\omega$ fixed.  
It states that we can always insert a global $\CC^1$ solution between two bounded
strict sub and supersolutions which do not intersect. The result is standard, a proof can be found \cite[Lemma A.8]{DKY23}

\begin{lemma}\label{lem:inbetw}
	Assume that $G(p,x)$ satisfies {\em (G1)} and {\em (G2)}, $a(x)>0$ for all $x\in\R$, there exist bounded functions $m,M\in\CC^1(\R)$ such that $m(x)<M(x)$ for all $x\in\R$, and either one of the following inequality holds:
\begin{itemize}
\item[(i)]\quad $a(x)m'(x)+G(m(x),x) < 0 < a(x)M'(x)+G(M(x),x)\qquad\forall x\in\R$;\medskip
\item[(ii)]\quad $a(x)m'(x)+G(m(x),x) > 0 > a(x)M'(x)+G(M(x),x)\qquad\forall x\in\R$.\smallskip
\end{itemize}
Then, there exists a function $ f\in\CC^1(\R)$ that solves the equation
	\begin{align*}
		a(x)f'(x)+G(f(x),x)=0\quad &\forall x\in \R, 
		\shortintertext{and satisfies}
		m(x) < f(x) < M(x)\quad&\forall x\in\R.\\
	\end{align*}
\end{lemma}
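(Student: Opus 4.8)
\textbf{Proof proposal for Lemma~\ref{lem:inbetw}.}

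The plan is to realize $f$ as the limit of solutions of the ODE on larger and larger intervals, obtained by a shooting/comparison argument, and to control it by the barriers $m$ and $M$. I will treat case (i), case (ii) being symmetric (apply (i) to $-f$, $-m$, $-M$ with the sign-reversed Hamiltonian, or just mirror the argument). First I would fix, for each $n\in\N$, a point $x_n\in(-n,n)$, say $x_n=0$ when $0\in(-n,n)$, and consider on the interval $(-n,n)$ the Cauchy problem $a(x)f'+G(f(x),x)=0$ with initial datum $f(0)=c$ for $c$ ranging over the compact interval $[m(0),M(0)]$. Since $a>0$ and $a$, $G$ are continuous in $x$ and (by (G2)) locally Lipschitz in $p$, the Cauchy--Lipschitz theorem gives a unique maximal solution; the point is to show that for a suitable choice of $c=c_n$ this solution stays in the open strip $\{m<f<M\}$ on all of $(-n,n)$.

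The key mechanism is that $m$ is a strict subsolution and $M$ a strict supersolution of the equation $a f'+G(f,x)=0$, so no solution can cross $M$ from below nor cross $m$ from above: at a first contact point $x_0$ with $f(x_0)=M(x_0)$ and $f\le M$ nearby we would have $f'(x_0)=M'(x_0)$, hence $0=a(x_0)f'(x_0)+G(f(x_0),x_0)=a(x_0)M'(x_0)+G(M(x_0),x_0)>0$, a contradiction; symmetrically for $m$. Consequently, once a solution enters the open strip it cannot leave it, and in particular it cannot blow up or reach the endpoints of $(-n,n)$ while inside the strip, so it is defined on the whole of $(-n,n)$. To get \emph{some} solution through the strip, I would either start from $f(0)=c$ with $c$ strictly between $m(0)$ and $M(0)$ (the trapping property just described then keeps it strictly inside on all of $(-n,n)$, so any interior initial value works), or, if one prefers a cleaner statement, take the infimum of all supersolutions lying below $M$ and above $m$ as in the proof of Lemma~\ref{lemma Busemann function}. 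Either way we obtain $f_n\in\CC^1((-n,n))$ solving the equation on $(-n,n)$ with $m<f_n<M$ there.

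Finally I would pass to the limit $n\to\infty$. On any fixed bounded interval $J$, the functions $f_n$ (for $n$ large) are uniformly bounded, since $m<f_n<M$ and $m,M$ are bounded; hence, using the equation and $\inf_J a>0$, the derivatives $f_n'=-G(f_n(\cdot),\cdot)/a(\cdot)$ are uniformly bounded on $J$, so $(f_n)$ is equi-Lipschitz on $J$. By Arzel\`a--Ascoli and a diagonal extraction, a subsequence converges locally uniformly on $\R$ to some $f$; feeding this back into $f_n'=-G(f_n,\cdot)/a$ shows $f_n'\to -G(f,\cdot)/a$ locally uniformly, so $f\in\CC^1(\R)$ and $a f'+G(f,x)=0$ on $\R$. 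The inequalities pass to the limit as $m\le f\le M$; the strictness $m<f<M$ on all of $\R$ then follows from the same trapping argument as above (a solution that touches $M$ at one point must agree with a strict supersolution there, which is impossible), or by invoking the non-crossing Lemma~\ref{lem:order}. The main obstacle is the bookkeeping in the trapping step---making precise that a solution cannot exit the strip and therefore exists on the full interval $(-n,n)$---but this is exactly where the strict sub/supersolution inequalities in hypothesis (i) (resp.\ (ii)) are used, and it is a routine maximum-principle argument.
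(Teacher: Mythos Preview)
Your overall strategy---build solutions trapped between $m$ and $M$ on expanding intervals, then pass to the limit by Arzel\`a--Ascoli---is correct and is essentially the standard proof (the paper defers to \cite[Lemma~A.8]{DKY23}). But the trapping step has a genuine gap: the claim that ``any interior initial value works'' at $x=0$ is false, because the barriers trap in only \emph{one} direction. In case~(i), at any contact point $f(x_0)=M(x_0)$ the equation gives $f'(x_0)<M'(x_0)$. This rules out a first contact with $M$ when moving to the \emph{right} (there $(M-f)'(x_0)\le 0$ from the left, contradicting $(M-f)'(x_0)>0$), but it is perfectly consistent with a first contact when moving to the \emph{left} (there $(M-f)'(x_0)\ge 0$ from the right, no contradiction), and a solution launched from a generic $c\in(m(0),M(0))$ can exit the strip before reaching $-n$. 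Your assertion ``$f'(x_0)=M'(x_0)$'' tacitly assumes the contact is two-sided (a local maximum of $f-M$), which is not what a first contact point along an IVP gives you. Case~(ii) is the mirror image: trapping works going left but not going right.

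The fix is exactly what the paper does when it uses this lemma in the proof of Theorem~\ref{teo lower bound}: in case~(i), start the Cauchy problem at the \emph{left} endpoint $x=-n$ with any value in $(m(-n),M(-n))$ and solve forward; the one-sided trapping then keeps the solution in the strip on all of $[-n,n]$, and your compactness and limit argument goes through unchanged (in case~(ii), start at $x=n$ and solve backward). Two minor points: the strict inequalities for the limit follow from the same contact-point computation, not from Lemma~\ref{lem:order}, which is a statement about stationary random functions; and the Perron alternative you suggest via Lemma~\ref{lemma Busemann function} concerns second-order viscosity supersolutions rather than first-order ODEs, so it would need a separate justification here.
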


We end this section with the following useful stability result, see \cite[Lemma A.9]{DKY23} for a proof.  

\begin{lemma}\label{appendix A lemma lattice}
Assume $G(p,x)$ satisfies (G2) and $a(x)>0$ for all $x\in\R$. Let $\Sol_\lambda$ be a nonempty family of solutions to
\begin{equation}\label{appendix A eq ODE}
	a(x)u'(x) + G(u(x),x) = \lambda,\quad x\in\R.
\end{equation}
If $\Sol_\lambda$ is a compact subset of $\D{C}(\R)$, then the functions 
\[
\ul u (x):=\inf_{u\in\Sol_\lambda} u(x),
\quad
\ol u(x):=\sup_{u\in\Sol_\lambda} u(x),
\qquad
x\in\R,
\] 
are in $\Sol_\lambda$. 
\end{lemma}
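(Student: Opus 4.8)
The plan is to exploit that, under (G2) and $a>0$, the Cauchy problem for the ODE \eqref{appendix A eq ODE} is uniquely solvable, so that distinct elements of $\Sol_\lambda$ never cross; this makes $\Sol_\lambda$ \emph{totally ordered} for the pointwise order on $\R$, and then $\ol u$ and $\ul u$ are simply the largest and the smallest element of $\Sol_\lambda$, hence members of it.

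First I would rewrite \eqref{appendix A eq ODE} as $u'(x)=\Phi(x,u(x))$ with $\Phi(x,p):=(\lambda-G(p,x))/a(x)$. On any box $[x_1,x_2]\times[-R,R]$ the function $a$ is continuous and strictly positive, hence bounded below by a positive constant, while (G2) gives $|G(p,x)-G(q,x)|\le C_R|p-q|$ for $p,q\in[-R,R]$, uniformly in $x$; thus $\Phi$ is continuous and Lipschitz in its second variable there, and Picard--Lindel\"of applies. Consequently, two solutions of \eqref{appendix A eq ODE} on $\R$ that coincide at one point coincide everywhere. Hence, if $u_1,u_2\in\Sol_\lambda$, then either $u_1\equiv u_2$, or $u_1-u_2$ never vanishes and, being continuous, keeps a strict constant sign on $\R$: any two elements of $\Sol_\lambda$ are comparable at every point.

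Next I would fix $x_0\in\R$ and use compactness of $\Sol_\lambda$ in $\CC(\R)$: the set $\{u(x_0):u\in\Sol_\lambda\}$ is bounded, so taking $u_n\in\Sol_\lambda$ with $u_n(x_0)\to\ol u(x_0)$ and passing to a subsequence converging in $\CC(\R)$ produces $u_\ast\in\Sol_\lambda$ with $u_\ast(x_0)=\ol u(x_0)$. For every $v\in\Sol_\lambda$ one has $v(x_0)\le u_\ast(x_0)$, and since $v$ and $u_\ast$ are comparable everywhere this forces $v\le u_\ast$ on all of $\R$; therefore $\ol u=u_\ast\in\Sol_\lambda$. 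Running the same argument with $\inf$ in place of $\sup$ gives $\ul u\in\Sol_\lambda$. The only step needing a bit of care is the first one — checking that the stated hypotheses on $a$ and $G$ are exactly what is needed to place the equation in the Picard--Lindel\"of framework locally; once uniqueness is secured, the rest is soft.
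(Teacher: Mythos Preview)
Your argument is correct. The key observation---that under (G2) and $a>0$ the ODE enjoys local Picard--Lindel\"of uniqueness, so that $\Sol_\lambda$ is totally ordered by the pointwise order---is exactly the right idea, and your use of compactness to extract a maximal (resp.\ minimal) element is clean and complete. The only cosmetic point is that once you have $u_\ast(x_0)=\ol u(x_0)$ and total ordering, the case $v(x_0)=u_\ast(x_0)$ forces $v\equiv u_\ast$ by uniqueness, which you implicitly use; this is fine.

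As for comparison with the paper: the paper does not actually prove this lemma but defers to \cite[Lemma A.9]{DKY23}. Your approach is the natural one and almost certainly coincides with what is done there, since the structure of the argument (uniqueness $\Rightarrow$ no crossings $\Rightarrow$ total order $\Rightarrow$ compactness yields a greatest and least element) is essentially forced by the hypotheses.
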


\section{PDE results}\label{app:PDE}
In this appendix, we collect some known PDE results that we need in the paper. Throughout this section, we will denote by 
$\D{UC}(X)$, $\D{LSC}(X)$ and $\D{USC}(X)$ the space of uniformly continuous, lower semicontinuous and upper semicontinuous real functions on a metric space $X$, respectively. 
We will denote by $H$ a continuous function defined on $\R\times\R$. If not otherwise stated, we shall assume that 
$H$ belongs to the class $\Ham$  introduced in Definition \ref{def:Ham}, for some constants $\alpha_0,\alpha_1>0$ and $\gamma>1$. 

We will assume that $a:\R\to [0,1]$ is a function satisfying the following assumption, for some constant $\kappa  > 0$: 
\begin{itemize}
\item[(A)]  $\sqrt{a}:\R\to [0,1]$\ is $\kappa $--Lipschitz continuous.
\end{itemize}
Note that (A) implies that $a$ is $2\kappa$--Lipschitz in $\R$.\smallskip 

%
%

\subsection{Stationary equations} Let us consider a stationary viscous HJ equation of the form
\begin{equation}\label{eq PDE}
	a(x)u''(x)+H(x,u')=\lambda\qquad \hbox{ in $\R$,}
		\tag{SHJ}
\end{equation}
where $\lambda\in\R$, the nonlinearity $H$ belongs to $\Ham$, and  $a:\R\to [0,1]$ satisfies condition (A). 
The following holds. 
\begin{prop}\label{prop regularity solutions}
Let $u\in\CC(\R)$ be a viscosity solution of \eqref{eq PDE}. Let us assume that $H\in\Ham$ for some constants $\alpha_0,\alpha_1>0$ and $\gamma>1$. Then 
\[
|u(x)-u(y)| \leqslant K |x-y|
\qquad\hbox{for all $x,y\in\R$,}
\]
where $K>0$ is given explicitly by 
\begin{equation}\label{eq Lipschitz bound}
K:=C\left( 
		\left(
			\kappa  
			\dfrac{\sqrt{1+\alpha_1+|\lambda|}}{\alpha_0}
		\right)^{\frac{2}{\gamma-1}} + 
	\left(
	\dfrac{1+\lambda\alpha_0}{\alpha^2_0}
	\right)^{\frac1\gamma}
\right)
\end{equation}
with $C>0$ depending only on $\gamma$.
Furthermore, $u$ is of class $C^2$ (and hence a pointwise solution of \eqref{eq PDE}) in every open interval $I$ where $a(\cdot)$ is strictly positive. 
\end{prop}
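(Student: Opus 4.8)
The plan is to establish the global Lipschitz bound first, and then to upgrade interior regularity to $C^2$ on intervals where $a>0$. For the Lipschitz estimate, I would use the classical \emph{doubling-of-variables} technique adapted to viscous equations. Fix a large box and consider the auxiliary function $\Phi(x,y):=u(x)-u(y)-K|x-y|-\sigma(|x|^2+|y|^2)$ for small penalization parameter $\sigma>0$ and a constant $K$ to be chosen; by coercivity-type a priori bounds on $u$ (which follow from (H1) by comparison with explicit quadratic sub/supersolutions, or can be taken as given since we only need a \emph{local} statement here) the supremum of $\Phi$ is attained at an interior point $(\bar x,\bar y)$. If $\bar x\neq\bar y$, one writes the viscosity inequalities for $u$ as a subsolution at $\bar x$ and as a supersolution at $\bar y$, using the Theorem on Sums to control the second-order terms. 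The diffusion coefficients enter through $a(\bar x)$ and $a(\bar y)$, and the key cancellation uses assumption (A): since $\sqrt a$ is $\kappa$-Lipschitz, $|a(\bar x)-a(\bar y)|\le 2\kappa(\sqrt{a(\bar x)}+\sqrt{a(\bar y)})|\bar x-\bar y|$, and the dangerous term is of the form $(\text{coeff})\cdot|\bar x-\bar y|^{-1}$ times this difference, which stays bounded. Subtracting the two inequalities and using (H1)–(H2) to absorb the Hamiltonian contribution, one arrives at an inequality of the form $\alpha_0 K^\gamma \lesssim 1+\alpha_1+|\lambda|+\kappa\sqrt{1+\alpha_1+|\lambda|}\,K$. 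Solving this for $K$ (balancing the two competing terms on the right, which are respectively of order $K^0$ and $K^1$ against $K^\gamma$ on the left) yields precisely the two summands in \eqref{eq Lipschitz bound}: the first from the $\kappa K$ term, giving the exponent $2/(\gamma-1)$, and the second from the constant term, giving the exponent $1/\gamma$. Choosing $K$ as in \eqref{eq Lipschitz bound} makes the inequality fail, so we must have $\bar x=\bar y$, whence $\Phi\le 0$ and, letting $\sigma\to 0$, the claimed Lipschitz bound.

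For the $C^2$ statement, fix an open interval $I$ with $a>0$ on $I$, so that $a\ge\delta>0$ on every compact subinterval. On such a subinterval the equation can be rewritten as $u''=\big(\lambda-H(x,u')\big)/a(x)$. Since $u$ is already Lipschitz with the bound just obtained, $u'$ (in the viscosity/distributional sense) is bounded, and the right-hand side is a bounded measurable function; standard interior regularity for the linearized problem (or a direct viscosity argument, testing with smooth functions and using that $a$ is continuous and bounded away from zero) shows $u\in W^{2,p}_{loc}(I)$ for every $p<\infty$, hence $u\in C^{1,\alpha}_{loc}(I)$. Then $x\mapsto H(x,u'(x))$ is continuous by (H2)–(H3), so $u''=(\lambda-H(x,u'))/a$ is continuous, giving $u\in C^2(I)$; the viscosity solution is then a classical pointwise solution there. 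This step is essentially the content of \cite[Appendix]{DKY23}-type bootstrapping and I would cite it if available, or sketch it as above.

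The main obstacle is the Lipschitz estimate, and specifically the bookkeeping that produces the \emph{explicit} constant in \eqref{eq Lipschitz bound} rather than just \emph{some} constant. The delicate point is that the growth exponent $\gamma>1$ appears in (H1) both as a lower bound $\alpha_0|p|^\gamma$ and, through (H2), as the modulus $(|p|+|q|+1)^{\gamma-1}$ of the $p$-dependence; in the doubling argument the gradient variable at the maximum point is of size comparable to $K$, so the Hamiltonian difference term is of order $K^{\gamma-1}\cdot|\bar x-\bar y|$ and must be reabsorbed correctly, while the diffusion term, after using (A), contributes at order $\sqrt{K^{\gamma}}\cdot\kappa$ (the $\sqrt{a}\sim\sqrt{|H|}\sim\sqrt{K^\gamma}$ heuristic). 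Keeping these two sources — the Lipschitz-in-$x$ term of the Hamiltonian and the Lipschitz-in-$x$ term of $\sqrt a$ — separate and extracting the two exponents $2/(\gamma-1)$ and $1/\gamma$ is the only genuinely technical part; everything else is standard viscosity-solution machinery. I would organize the computation so that these two contributions are isolated from the start, treat them by Young's inequality against the good term $\alpha_0 K^\gamma$, and read off the constant $C=C(\gamma)$ at the end.
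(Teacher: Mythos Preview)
Your strategy is correct and matches the paper's: the Lipschitz estimate there is obtained by direct citation of \cite[Theorem~3.1]{AT}, which is precisely the doubling-of-variables argument you outline, and the $C^2$ upgrade follows the same bootstrap (Lipschitz $\Rightarrow$ $u''\in L^\infty$ in the viscosity, hence distributional, sense via \cite{Is95} $\Rightarrow$ $W^{2,p}$ $\Rightarrow$ $C^{1,\sigma}$ $\Rightarrow$ Schauder $\Rightarrow$ $C^{2,\sigma}$). One bookkeeping caveat: the intermediate inequality you display, $\alpha_0 K^\gamma \lesssim \text{const}+\kappa\sqrt{1+\alpha_1+|\lambda|}\,K$, would yield the exponent $1/(\gamma-1)$ rather than the $2/(\gamma-1)$ appearing in \eqref{eq Lipschitz bound}, so when you carry out the Jensen--Ishii step in detail the second-order diffusion contribution will take a different form than the linear-in-$K$ term you wrote; this does not affect the overall approach, only the final constant-tracking.
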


\begin{proof}
The Lipschitz character of $u$ is direct consequence of \cite[Theorem 3.1]{AT}, to which we refer for a proof.  
Let us now assume that $a(\cdot)$ is strictly positive on some open interval $I$. Without loss of generality, we can assume that $I$ is bounded and $\inf_I a>0$. From the Lipschitz character of $u$ we infer that $-C\leq u''\leq C$ in $I$ in the viscosity sense 
for some constant $C>0$, or, equivalently, in the distributional sense, in view of  \cite{Is95}. Hence, 
$u'' \in L^\infty(I)$. The elliptic regularity theory, see \cite[Corollary 9.18]{GilTru01}, ensures that $u\in W^{2,p}(I)$ for any $p>1$
and, hence, $u\in \D{C}^{1,\sigma}(I)$ for any $0<\sigma<1$. Since $u$ is a viscosity solution to \eqref{eq PDE} in $I$, 
by Schauder theory \cite[Theorem 5.20]{HL97}, we conclude that $u\in \D{C}^{2,\sigma}(I)$ for any $0<\sigma<1$.
\end{proof}

We shall also need the following H\"older estimate for supersolutions of \eqref{eq PDE}.

\begin{prop}\label{prop Holder estimate}
Let us assume that $H\in\Ham$ with $\gamma>2$. Let $u\in\CC(\R)$ be a supersolution of \eqref{eq PDE} for some 
$\lambda\in\R$.  Then 
\begin{equation*}
|u(x)-u(y)| \leqslant K |x-y|^{\frac{\gamma-2}{\gamma-1}}\qquad\hbox{for all $x,y\in\R$,}
\end{equation*}
where $K>0$ is given explicitly by 
\begin{equation}\label{eq Holder bound}
K:=C\left( 
		\left(
			\dfrac{1}{\alpha_0}
		\right)^{\frac{1}{\gamma-1}} + 
	\left(
	\dfrac{1+\lambda\alpha_0}{\alpha^2_0}
	\right)^{\frac1\gamma}
\right)
\end{equation}
with $C>0$ depending only on $\gamma$.
\end{prop}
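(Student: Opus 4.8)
The statement to prove is Proposition \ref{prop Holder estimate}: a Hölder estimate for continuous viscosity supersolutions of \eqref{eq PDE} when $H\in\Ham$ with $\gamma>2$.

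The plan is to exploit only the lower bound $H(x,p,\omega)\geqslant\alpha_0|p|^\gamma-1/\alpha_0$ from (H1) together with the sign constraint on the diffusion. Since $a\geqslant 0$, a supersolution $u$ of $a u''+H(x,u')=\lambda$ is in particular a viscosity supersolution of the first-order inequality $\alpha_0|u'|^\gamma-1/\alpha_0\leqslant\lambda+a u''$; but this still has the troublesome $au''$ term, so the cleanest route is instead to build an explicit concave test function and use the supersolution property directly. The standard trick: fix $x_0\in\R$ and $R>0$, and seek a smooth function $\varphi$ of the form $\varphi(x)=u(x_0)+C|x-x_0|^{\beta}$ (with $\beta=\frac{\gamma-2}{\gamma-1}\in(0,1)$ for $\gamma>2$) such that $\varphi$ cannot touch $u$ from below at any point $x\neq x_0$ inside a small ball, which forces the one-sided oscillation bound $u(x)\geqslant u(x_0)-C|x-x_0|^\beta$; running the symmetric argument and combining gives the two-sided Hölder estimate.

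Concretely, first I would reduce to a local statement: it suffices to show $|u(x)-u(x_0)|\leqslant K|x-x_0|^\beta$ for $|x-x_0|\leqslant 1$, since for $|x-x_0|>1$ one can chain the estimate over unit steps (or simply note the exponent $\beta<1$ makes the large-scale bound trivial after adjusting $K$, using that a supersolution is locally bounded — actually here one wants a genuinely local barrier argument since no a priori bound on $u$ is assumed). So the core is: for any $x_0$ and any $0<\rho\leqslant 1$, control $\inf_{B_\rho(x_0)}u$ from below by $u(x_0)-K\rho^\beta$. To do this, consider $\psi(x)=u(x_0)-C\big(\rho^\beta-|x-x_0|^\beta\big)$ hmm — better to use $\psi(x)=u(x_0)+A - B|x-x_0|^{\beta}$ adjusted so that $\psi$ lies below $u$ near $x_0$ and above on the sphere $|x-x_0|=\rho$ for a contradiction if the oscillation were too large; then at an interior minimum of $u-\psi$, $\psi$ is a smooth subtangent to $u$, and since $u$ is a supersolution and $\psi$ is smooth,
\[
a(y)\psi''(y)+H(y,\psi'(y))\geqslant\lambda.
\]
Now $\psi'(y)=-B\beta|y-x_0|^{\beta-1}\operatorname{sgn}(y-x_0)$ and $\psi''(y)=-B\beta(1-\beta)|y-x_0|^{\beta-2}\leqslant 0$, so $a(y)\psi''(y)\leqslant 0$, giving $H(y,\psi'(y))\geqslant\lambda$; but by (H1), $H(y,\psi'(y))\leqslant\alpha_1(|\psi'(y)|^\gamma+1)$ — wait, for a lower bound on $|\psi'|$ we instead use $\alpha_0|\psi'(y)|^\gamma-1/\alpha_0\leqslant H(y,\psi'(y))$ is the wrong direction. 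The right move: from $a\psi''\leqslant 0$ and the supersolution inequality we get $H(y,\psi'(y))\geqslant\lambda$, and combined with (H1)'s upper bound this is automatically true and gives nothing. So I must instead test with a convex (not concave) barrier, i.e. $\psi(x)=u(x_0)-B(\rho^\beta-|x-x_0|^\beta)$-type won't work either; the correct choice is $\psi$ with $\psi''>0$ large, of the form $\psi(x)=u(x_0)-\eta+M|x-x_0|^{\gamma'}$ with a superlinear power, so that at a touching point $a\psi''\geqslant 0$ can't save the inequality and $H(y,\psi')\geqslant\lambda$ forces $|\psi'(y)|\leqslant(\tfrac{\lambda+1/\alpha_0}{\alpha_0})^{1/\gamma}$, bounding the slope and hence the oscillation.

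Thus the key step — and the main obstacle — is choosing the exponent and constant in the barrier so that the curvature term $a\psi''$ has a favorable sign (or is dominated), so that the first-order term $H(y,\psi')\geqslant\lambda$ yields a slope bound $|\psi'(y)|^\gamma\leqslant(\lambda+1/\alpha_0)/\alpha_0$ at the contact point; integrating this bound over the contact radius and optimizing in the free parameter produces the exponent $\frac{\gamma-2}{\gamma-1}$ and the explicit constant $K$ in \eqref{eq Holder bound}. Rather than reinventing this, I would invoke the known local Hölder regularity theory for viscosity supersolutions of coercive degenerate-elliptic equations — this is exactly the content of results such as \cite[Theorem 3.1]{AT} applied to supersolutions, or the classical Ishii–Lions / Capuzzo-Dolcetta–Ishii type estimates — after checking that $H\in\Ham$ with $\gamma>2$ supplies the required coercivity exponent (the gap between $\gamma$ and the order $2$ of the diffusion is what yields the positive Hölder exponent $\frac{\gamma-2}{\gamma-1}$); the explicit form of $K$ then follows by tracking constants through that argument, exactly as the bound \eqref{eq Lipschitz bound} for solutions was obtained in Proposition \ref{prop regularity solutions} from \cite[Theorem 3.1]{AT}. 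The one subtlety to be careful about is that the estimate must be uniform in $\omega$ and in $x_0$, which holds because $\alpha_0,\alpha_1,\gamma$ are fixed for the whole class $\Ham$ and the barrier construction is translation-invariant.
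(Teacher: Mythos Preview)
Your exploratory barrier argument contains two concrete sign errors that derail it. First, in the paper's convention (stated explicitly just before Section~\ref{sec:correctors}), a viscosity \emph{supersolution} of \eqref{eq PDE} tested with a smooth subtangent $\psi$ at $y$ gives
\[
a(y)\psi''(y)+H(y,\psi'(y))\leqslant\lambda,
\]
not $\geqslant\lambda$ as you write. Second, for $\beta\in(0,1)$ and $B>0$ the function $x\mapsto -B|x-x_0|^\beta$ is \emph{convex} on each side of $x_0$, so your computation of $\psi''$ has the wrong sign. These two errors cancel in one place and compound in another, which is why you keep reversing course. A correct barrier argument along your lines can be carried out (it is essentially the Capuzzo-Dolcetta--Leoni--Porretta estimate), but it requires balancing $|\psi'|^\gamma$ against $|\psi''|$ rather than discarding the second-order term, and this is precisely what produces the exponent $\frac{\gamma-2}{\gamma-1}$.

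The paper's proof avoids all of this with a two-line reduction you are missing: set $v:=-u$. Then $v$ is a viscosity subsolution of the equation with $-a$ and $\check H(x,p):=H(x,-p)$ in place of $a$ and $H$; using $0\leqslant a\leqslant 1$ and the lower bound in (H1) one obtains directly that $v$ satisfies, in the viscosity sense,
\[
-|v''|+\alpha_0|v'|^\gamma\leqslant \lambda+\frac{1}{\alpha_0}\qquad\hbox{in $\R$.}
\]
This is exactly the model inequality to which \cite[Lemma~3.2]{AT} applies, yielding the H\"older bound with the stated constant. Your pointer to \cite[Theorem~3.1]{AT} is the wrong result: that is the Lipschitz estimate for \emph{solutions} used in Proposition~\ref{prop regularity solutions}, and it does not cover supersolutions alone.
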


\begin{proof}
The function $v(x):=-u(x)$ is a viscosity subsolution of \eqref{eq PDE} with $-a(\cdot)$ in place of $a(\cdot)$ and $\check H(x,p):=H(x,-p)$ in place of $H$. By the fact that $a\leqslant 1$ and $H\in\Ham$, we derive that $v$ satisfies the following inequality in the viscosity sense:
\[
-|v''|+\alpha_0|v'|^\gamma\leqslant \lambda+\dfrac{1}{\alpha_0}\quad\hbox{in $\R$}.
\]
The conclusion follows by applying \cite[Lemma 3.2]{AT}.
\end{proof}

\subsection{Parabolic equations}
Let us consider a parabolic PDE of the form
\begin{equation}\label{appB eq parabolic HJ}
\partial_{t }u=a(x) \partial^2_{xx} u +H(x,\partial_x u), \quad(t,x)\in (0,+\infty)\times\R.
\tag{EHJ}
\end{equation}

We start with a comparison principle stated in a form which is the one we need in the paper.

\begin{prop}\label{appB prop comparison}
Suppose $a$ satisfies (A) and  $H\in\D{UC}\left(B_r\times\R\right)$ for every $r>0$. Let $v\in\D{USC}([0,T]\times\R)$ and $w\in\D{LSC}([0,T]\times\R)$ be, respectively, a sub and a supersolution of \eqref{appB eq parabolic HJ} in $(0,T)\times \R$ such that 
\begin{equation}\label{hyp 2}
\limsup_{|x|\to +\infty}\ \sup_{t\in [0,T]}\frac{v(t,x)-\theta x}{1+|x|}\leqslant 0 
\leqslant 
\liminf_{|x|\to +\infty}\ \sup_{t\in [0,T]}\frac{w(t,x)-\theta x}{1+|x|}
\end{equation}
for some $\theta\in\R$. Let us furthermore assume that either $\partial_x v$ or $\partial_x w$ belongs to $L^\infty\left((0,T)\times \R\right)$. Then, 
\[
v(t,x)-w(t,x)\leqslant \sup_{\R}\big(v(0,\,\cdot\,) - w(0,\,\cdot\,)\big)\quad\hbox{for every  $(t,x)\in (0,T)\times \R$.} 
\]
\end{prop}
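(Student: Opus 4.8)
\textbf{Proof proposal for Proposition \ref{appB prop comparison} (comparison principle).}

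The plan is to adapt the standard viscosity-solutions comparison argument for parabolic HJ equations with unbounded domain, using the doubling-of-variables technique together with a penalization that controls behavior at spatial infinity. First I would reduce to the case where, say, $\partial_x v \in L^\infty((0,T)\times\R)$, so that $v$ is globally Lipschitz in $x$ with some constant $L$; by the first growth condition in \eqref{hyp 2} this forces $v(t,x) \leq \theta x + C(1+|x|)$ and in fact, combined with the Lipschitz bound, $v(t,x) - \theta x$ is bounded above, while $w(t,x) - \theta x$ is bounded below along sequences $|x|\to\infty$. The key preliminary step is to replace $v$ by $v_\eta(t,x) := v(t,x) - \eta x^2 - \eta/(T-t)$ (or a linear-in-time penalization $v - \eta t$), so that $v_\eta$ becomes a strict subsolution and $v_\eta(t,x) - w(t,x) \to -\infty$ as $|x|\to\infty$ uniformly in $t$, owing to the one-sided growth conditions \eqref{hyp 2}: subtracting $\theta x$ from both and noting $-\eta x^2 \to -\infty$ dominates the $O(|x|)$ discrepancy. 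It then suffices to prove $v_\eta - w \leq \sup_\R(v(0,\cdot)-w(0,\cdot))$ and let $\eta \to 0^+$.

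Next I would argue by contradiction: suppose $M := \sup_{(0,T)\times\R}(v_\eta - w) > \sup_\R(v(0,\cdot)-w(0,\cdot)) =: m_0$. Because of the penalization, this supremum is attained at some interior point $(\hat t, \hat x)$ with $\hat t > 0$. Then perform the doubling of variables: for $\beta>0$ consider
\[
\Phi_\beta(t,x,y) := v_\eta(t,x) - w(t,y) - \frac{\beta}{2}|x-y|^2,
\]
which attains its maximum over $[0,T]\times\R\times\R$ at some point $(t_\beta, x_\beta, y_\beta)$ (existence again guaranteed by the penalization making $\Phi_\beta\to-\infty$ at spatial infinity, using the Lipschitz-in-$x$ bound on $v$ to handle the $x$-variable and the $-\eta x^2$ term; the $y$-variable is controlled once $x$ is, via $\beta|x-y|^2$). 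Standard arguments give $\beta|x_\beta-y_\beta|^2 \to 0$, $t_\beta \to \hat t > 0$, and $\Phi_\beta(t_\beta,x_\beta,y_\beta)\to M$. Applying the parabolic theorem on sums (the Crandall--Ishii--Jensen lemma) at the interior maximum yields, for each $\beta$, numbers $b_1 = b_2 =: b$ (the time derivatives coincide in the parabolic case) and $X, Y$ with $X \leq Y$ such that
\[
b = a(x_\beta)X + H(x_\beta, \beta(x_\beta-y_\beta)), \qquad b = a(y_\beta)Y + H(y_\beta, \beta(x_\beta-y_\beta))
\]
hold with the appropriate sub/super inequalities (and the strict-subsolution gain from the penalization). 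Subtracting, the Hamiltonian terms are handled by the uniform continuity of $H$ on $B_r\times\R$ together with $|x_\beta-y_\beta|\to 0$ and the fact that the common momentum $p_\beta := \beta(x_\beta-y_\beta)$ stays in a fixed compact set (here I would invoke: $v_\eta$ Lipschitz in $x$ forces $|p_\beta| \leq L$ from the maximum-point inequality $\partial_x[v_\eta(t_\beta,\cdot)]$-type bound). The diffusion terms $a(x_\beta)X - a(y_\beta)Y$ are controlled using $X \leq Y$, $a \geq 0$, and the $\kappa$-Lipschitz bound on $\sqrt{a}$ via the classical estimate $a(x_\beta)X - a(y_\beta)Y \leq (\sqrt{a(x_\beta)}-\sqrt{a(y_\beta)})^2 \,\mathrm{tr}(\text{something}) \lesssim \kappa^2|x_\beta-y_\beta|^2\beta$, which tends to $0$. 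Passing to the limit $\beta\to\infty$ then contradicts the strict-subsolution gain built in by the $\eta$-penalization.

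The main obstacle I anticipate is \emph{justifying that the maxima in the doubling procedure are actually attained and that the maximizing points stay in a compact region}, since the domain is all of $\R$ and $H$ is only locally uniformly continuous. This is exactly where the one-sided growth hypotheses \eqref{hyp 2} and the one-sided Lipschitz (or $L^\infty$-gradient) assumption on $v$ or $w$ are essential: they must be combined carefully with the quadratic penalization $-\eta x^2$ to ensure coercivity of $\Phi_\beta$ in both spatial variables, uniformly in $\beta$ along the relevant ranges. A secondary technical point is the correct handling of the diffusion matrix inequality $X \leq Y$ together with the degenerate-allowed $a \geq 0$ — one uses the Lipschitz-square-root condition (A) precisely so that $\sqrt{a}$ rather than $a$ appears, which is what makes the second-order term vanish in the limit. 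Once these coercivity and degenerate-ellipticity points are secured, the rest is the routine Crandall--Ishii machinery, and letting $\eta\to 0^+$ at the end recovers the stated inequality.
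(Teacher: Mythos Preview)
Your proposal follows the classical doubling-of-variables route and is essentially sound in outline, but it differs markedly from the paper's treatment. The paper's proof is a three-line reduction: it sets $\tilde v(t,x):=v(t,x)-\theta x$ and $\tilde w(t,x):=w(t,x)-\theta x$, observes that these are a sub- and supersolution of \eqref{appB eq parabolic HJ} with $H(\cdot,\theta+\cdot)$ in place of $H$ and with growth condition \eqref{hyp 2} for $\theta=0$, and then invokes \cite[Proposition 1.4]{D19}. In other words, the paper outsources the entire Crandall--Ishii machinery to an external reference, whereas you reproduce that machinery in full. Your approach is self-contained, which is pedagogically valuable; the paper's is much shorter and appropriate for an appendix.

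One technical slip in your sketch is worth flagging: once you penalize by $-\eta x^2$, the test-function gradient for $v$ at the doubled maximum is $2\eta x_\beta+\beta(x_\beta-y_\beta)$, not $\beta(x_\beta-y_\beta)$ as you wrote, and the second derivative picks up an extra $2\eta$. Consequently $v_\eta$ is \emph{not} a subsolution of the original equation, and the two Hamiltonian terms you subtract are $H(x_\beta,\,p_\beta+2\eta x_\beta)$ and $H(y_\beta,\,p_\beta)$. This is harmless in the end---for fixed $\eta$ the maximizers $x_\beta$ stay in a bounded set (by the quadratic penalty and \eqref{hyp 2}), so $2\eta x_\beta$ is bounded and the extra discrepancy is controlled by the local uniform continuity of $H$; after sending $\beta\to\infty$ and then $\eta\to 0^+$ the error vanishes---but the bookkeeping needs to be done correctly. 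An alternative that sidesteps this entirely is to first perform the paper's $\theta$-shift (reducing to sublinear $\tilde v,\tilde w$) and then penalize with a gentler term such as $-\eta\sqrt{1+x^2}$, whose gradient is globally bounded.
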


\begin{proof}
The functions $\tilde v(t,x):=v(t,x)-\theta x$ and $\tilde w(t,x):=w(t,x)-\theta x$ are, respectively, a subsolution and a supersolution of \eqref{appB eq parabolic HJ} in $(0,T)\times \R$ with $H(\,\cdot\,,\theta +\,\cdot\,)$ in place of $H$. The assertion follows by applying \cite[Proposition 1.4]{D19} to $\tilde v$ and $\tilde w$.
\end{proof}

Let us now assume that $H$ belongs to the class $\Ham$ for some fixed constants $\alpha_0,\alpha_1>0$ and $\gamma>1$.
%
%
The following holds. 

\begin{theorem}\label{appB teo well posed}
Suppose $a$ satisfies (A) and $H\in\Ham$. Then, for every $g\in\D{UC}(\R)$, there exists a unique function $u\in \D{UC}(\ccyl)$ that solves the equation \eqref{appB eq parabolic HJ}
subject to the initial condition $u(0,\,\cdot\,)=g$ on $\R$. If $g\in W^{2,\infty}(\R)$, then $u$ is Lipschitz continuous in $\ccyl$ and satisfies
\begin{equation*}
\|\partial_t u\|_{L^\infty(\ccyl)}\leqslant K 
\quad\text{and}\quad
\|\partial_x u\|_{L^\infty(\ccyl)}\leqslant K 
\end{equation*}
for some constant $K $ that depends only on $\|g'\|_{L^\infty(\R)}$, $\|g''\|_{L^\infty(\R)}$, $\kappa , \alpha_0,\alpha_1$ and $\gamma$. Furthermore, the dependence of $K $ on $\|g'\|_{L^\infty(\R)}$ and $\|g''\|_{L^\infty(\R)}$ is continuous. 
\end{theorem}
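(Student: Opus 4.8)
\textbf{Proof proposal for Theorem \ref{appB teo well posed}.}

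The plan is to split the statement into three parts: existence and uniqueness of a solution in $\D{UC}(\ccyl)$ for uniformly continuous data, the Lipschitz estimates when $g\in W^{2,\infty}(\R)$, and the continuous dependence of the Lipschitz constant on $\|g'\|_\infty$ and $\|g''\|_\infty$. For uniqueness, I would simply invoke the comparison principle of Proposition \ref{appB prop comparison}: a $\D{UC}(\ccyl)$ subsolution and supersolution with the same initial datum automatically satisfy the growth hypothesis \eqref{hyp 2} with $\theta=0$ (a uniformly continuous function on $\R$ has at most linear growth, so the relevant $\limsup$ is zero), and one of $\partial_x v$, $\partial_x w$ will be bounded once we have the a priori Lipschitz bound, so the two coincide. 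Uniqueness in the class $\D{UC}(\ccyl)$ without a gradient bound a priori can be recovered by first approximating $g$ by $W^{2,\infty}$ data, using the gradient estimate below for the approximants, and passing to the limit by the comparison principle applied to the approximating solutions (which are Lipschitz).

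For existence, I would proceed by approximation. First treat $g\in W^{2,\infty}(\R)$: mollify and truncate if needed so that $g$ is smooth with bounded derivatives up to second order, and solve \eqref{appB eq parabolic HJ} on large balls $B_R$ with, say, the boundary condition inherited from a suitable barrier, or better, localize using the finite-speed-of-propagation-type estimates available for this uniformly parabolic equation; alternatively invoke a known existence result for the Cauchy problem for quasilinear uniformly parabolic equations (here $a$ is only Lipschitz and possibly degenerate, so one should first add $\delta\partial_{xx}^2 u$, solve, and let $\delta\to0^+$). The key a priori bounds to carry through this limiting procedure are exactly the two displayed estimates. The bound on $\|\partial_x u\|_\infty$ comes from differentiating the equation (formally) in $x$ and running a maximum-principle argument on $v=\partial_x u$: writing $\partial_t v = a\,\partial_{xx}^2 v + 2\sqrt a\,(\sqrt a)'\,\partial_x v + \partial_x H(x,v) + \partial_p H(x,v)\,\partial_x v$, the coercivity (H1) and the growth of $\partial_p H$ from (H2) together with the $\kappa$-Lipschitz bound on $\sqrt a$ from (A) produce, at an interior spatial maximum of $|v|$, an inequality forcing $|v|$ to stay below a constant depending only on $\|g'\|_\infty$, $\kappa$, $\alpha_0$, $\alpha_1$, $\gamma$; this is the standard Bernstein-type gradient estimate. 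The bound on $\|\partial_t u\|_\infty$ then follows from the equation itself: $\partial_t u = a\,\partial_{xx}^2 u + H(x,\partial_x u)$, and once $\partial_x u$ is bounded and $\partial_{xx}^2 u$ is controlled at $t=0$ by $\|g''\|_\infty$, a comparison between $u(t+h,x)$ and $u(t,x)$ (using again Proposition \ref{appB prop comparison} and the bound $|\partial_t u(0,\cdot)|\le a\|g''\|_\infty+\sup|H(x,g')|\le K$) shows $u$ is Lipschitz in $t$ with the same kind of constant.

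The main obstacle I expect is twofold. First, the Bernstein gradient estimate must be made genuinely quantitative and its constant must be shown to depend \emph{continuously} (and only) on the listed quantities; the superlinearity $\gamma>1$ in (H1) is what makes the bad terms absorbable, and one has to track how the constant blows up or not as $\|g'\|_\infty$ and $\|g''\|_\infty$ vary—continuity is delicate precisely at the interface between the initial layer (where $\|g''\|_\infty$ controls things) and later times (where only $\|g'\|_\infty$ and the structure of $H$ matter). Second, since $a$ is merely Lipschitz and may vanish, the equation is not uniformly parabolic, so classical parabolic existence theory does not apply directly; one must regularize ($a_\delta:=a+\delta$), obtain the above estimates uniformly in $\delta$ (which is possible because the gradient bound does not use a lower bound on $a$—only $a\le 1$ and the Lipschitz bound on $\sqrt a$ enter), and then pass to the limit using stability of viscosity solutions. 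I would structure the write-up so that the $\delta$-independent estimates are proved first on the regularized problems and the passage to the limit is the last, routine step, citing the relevant uniqueness/comparison and stability facts for viscosity solutions of \eqref{appB eq parabolic HJ}.
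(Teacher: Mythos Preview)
Your proposal is correct in spirit and outlines the standard route (Bernstein-type gradient bound, time-Lipschitz via comparison of time-shifts, vanishing-viscosity regularization $a_\delta=a+\delta$, stability of viscosity solutions). This is essentially the content of the references the paper cites for this result. The paper itself, however, does not redo any of this: it simply observes that the theorem is already proved in \cite[Theorem 3.2]{D19} (see also \cite[Proposition 3.5]{AT}) under the additional hypothesis that $g$ is \emph{bounded}, and then reduces the general case to the bounded one by subtracting from $g$ a smooth function $\tilde g\in W^{3,\infty}(\R)\cap C^\infty(\R)$ with $\|g-\tilde g\|_{L^\infty(\R)}<1$; the function $u-\tilde g$ then solves \eqref{appB eq parabolic HJ} with the modified Hamiltonian $\tilde H(x,p):=a(x)\tilde g''(x)+H(x,p+\tilde g'(x))$ and bounded initial datum $g-\tilde g$. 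Since $\tilde H$ is again in some class $\mathscr{H}(\tilde\alpha_0,\tilde\alpha_1,\gamma)$ with constants depending only on $\alpha_0,\alpha_1,\gamma,\kappa$ and the $W^{2,\infty}$ norm of $\tilde g$, the cited result applies directly. Your approach buys self-containedness; the paper's buys brevity and avoids having to track the Bernstein constant by hand.
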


\begin{proof}
A proof of this result when the initial datum $g$ is furthermore assumed to be bounded is given in \cite[Theorem 3.2]{D19}, see also \cite[Proposition 3.5]{AT}. 
This is enough, since we can always reduce to this case by possibly picking a function $\tilde g\in W^{3,\infty}(\R)\cap\CC^\infty(\R)$ such that $\|g-\tilde g\|_{L^\infty(\R)}<1$ (for instance, by mollification) and by considering equation \eqref{appB eq parabolic HJ} with
$\tilde H(x,p):=a(x)(\tilde g)'' +H(x,p + (\tilde g)')$ in place of $H$, and initial datum $g-\tilde g$. 
\end{proof}

For every fixed $\theta\in\R$, we will denote by $u_\theta$ the unique solution of \eqref{appB eq parabolic HJ} in $\D{UC}(\ccyl)$ satisfying  $u_\theta(0,x)=\theta x$ for all $x\in\R$. 
Theorem \ref{appB teo well posed} tells us that $u_\theta$ is Lipschitz in $\ccyl$ and that its Lipschitz constant depends continuously on $\theta$. 
%
%
Let us define 
\begin{eqnarray}\label{appB eq infsup}
	\HF^L(H) (\theta):=\liminf_{t\to +\infty}\ \frac{u_\theta(t,0)}{t}\quad\text{and}
	\quad
	\HF^U(H) (\theta):=\limsup_{t\to +\infty}\ \frac{u_\theta(t,0)}{t}.
\end{eqnarray}
By definition, we have \ $\HF^L(H) (\theta)\leqslant \HF^U(H) (\theta)$\ for all $\theta\in\R$. 
Furthermore, the following holds, see \cite[Proposition B.3]{DKY23} for a proof.

\begin{prop}\label{appB prop HF}
Suppose $a$ satisfies (A) and $H\in\Ham$. Then the functions $\HF^L(H)$ and $\HF^U(H)$ satisfy (H1) and are locally Lipschitz on $\R$. 
\end{prop}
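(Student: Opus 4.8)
The plan is to derive both assertions from the parabolic comparison principle (Proposition \ref{appB prop comparison}), comparing $u_\theta$ with explicit affine barriers for the growth bound and with a ``tilted and lifted'' copy of $u_{\theta'}$ for the Lipschitz bound. First I would record the two ingredients from the well-posedness theory that make the comparison principle applicable: by Theorem \ref{appB teo well posed} (equivalently \cite[Theorem 3.2]{D19}), for each $R>0$ there is a constant $K_R>0$, depending only on $R$ and on $\kappa,\alpha_0,\alpha_1,\gamma$, with $\|\partial_x u_\theta\|_{L^\infty(\ccyl)}\le K_R$ for all $|\theta|\le R$; and, for each $T>0$, the function $u_\theta(t,\cdot)-\theta(\cdot)$ is bounded on $\R$, uniformly for $t\in[0,T]$ and $|\theta|\le R$. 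In particular every competitor appearing below will have the same linear spatial growth rate as the solution it is tested against and will have bounded $x$-derivative, so that \eqref{hyp 2} holds and Proposition \ref{appB prop comparison} can be invoked.

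For the bound (H1): fixing $\theta\in\R$, I would observe that, by (H1) for $H$, the affine function $\theta x+\alpha_1(|\theta|^\gamma+1)t$ is a classical (hence viscosity) supersolution of \eqref{appB eq parabolic HJ} and $\theta x+(\alpha_0|\theta|^\gamma-1/\alpha_0)t$ is a subsolution, both agreeing with $u_\theta$ at $t=0$. Comparing each with $u_\theta$ and setting $x=0$ would give
\[
\big(\alpha_0|\theta|^\gamma-1/\alpha_0\big)\,t\ \le\ u_\theta(t,0)\ \le\ \alpha_1\big(|\theta|^\gamma+1\big)\,t\qquad\text{for all }t>0,
\]
and dividing by $t$, then passing to $\liminf$ and $\limsup$ as $t\to+\infty$, would yield $\alpha_0|\theta|^\gamma-1/\alpha_0\le\HF^L(H)(\theta)\le\HF^U(H)(\theta)\le\alpha_1(|\theta|^\gamma+1)$, i.e.\ (H1) for both functions.

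For local Lipschitz continuity: fixing $R>0$ and $\theta_1,\theta_2\in[-R,R]$ and setting $C_R:=\alpha_1(2K_R+2R+1)^{\gamma-1}$, I would check that
\[
\tilde u(t,x):=u_{\theta_2}(t,x)+(\theta_1-\theta_2)\,x+C_R|\theta_1-\theta_2|\,t
\]
is a viscosity supersolution of \eqref{appB eq parabolic HJ}. Adding to a supersolution a function affine in $x$ and linear in $t$ alters the equation only through the Hamiltonian, so the inequality to verify is $H(x,\partial_x u_{\theta_2}+(\theta_1-\theta_2))\le H(x,\partial_x u_{\theta_2})+C_R|\theta_1-\theta_2|$, which follows from (H2) together with $\|\partial_x u_{\theta_2}\|_\infty\le K_R$ and $|\theta_1-\theta_2|\le 2R$. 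Since $\tilde u(0,x)=\theta_1 x=u_{\theta_1}(0,x)$, comparison would give $u_{\theta_1}(t,0)\le u_{\theta_2}(t,0)+C_R|\theta_1-\theta_2|\,t$ for all $t>0$; dividing by $t$ and letting $t\to+\infty$ would yield $\HF^L(H)(\theta_1)\le\HF^L(H)(\theta_2)+C_R|\theta_1-\theta_2|$ and likewise for $\HF^U$. Swapping $\theta_1$ and $\theta_2$ (using $\|\partial_x u_{\theta_1}\|_\infty\le K_R$ as well) would give the reverse inequalities, hence $|\HF^{L}(H)(\theta_1)-\HF^{L}(H)(\theta_2)|\le C_R|\theta_1-\theta_2|$ and the same for $\HF^U(H)$; since $R$ is arbitrary, both functions are locally Lipschitz on $\R$.

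The hard part will not be the barrier construction or the comparison arguments, which are routine, but rather securing the two a priori facts used at the start — the uniform gradient bound $\|\partial_x u_\theta\|_\infty\le K_R$ and the uniform $x$-boundedness of $u_\theta-\theta x$ — cleanly, given that the initial datum $\theta x$ is unbounded; this requires passing to the reduced equation for $u_\theta-\theta x$, whose Hamiltonian $H(x,\cdot+\theta)$ still lies in some class $\Hamall$, and invoking Theorem \ref{appB teo well posed} there.
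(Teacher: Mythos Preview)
Your proposal is correct and follows the standard comparison-principle argument: affine barriers $\theta x+ct$ for the growth bounds (H1), and tilted copies $u_{\theta_2}+(\theta_1-\theta_2)x+C_R|\theta_1-\theta_2|t$ for local Lipschitz continuity, with the reduction $u_\theta-\theta x$ used to extract the needed a priori bounds from Theorem~\ref{appB teo well posed}. The paper does not actually prove this proposition---it simply refers to \cite[Proposition~B.3]{DKY23}---so there is no in-paper argument to compare against; your self-contained proof is exactly the kind of argument that reference contains.
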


\bibliography{viscousHJ}
\bibliographystyle{siam}
\end{document}